\theoremstyle{plain}
\newtheorem{theorem}{Theorem}[section]
\newtheorem{lemma}[theorem]{Lemma}
\newtheorem{corollary}[theorem]{Corollary}
\newtheorem{proposition}[theorem]{Proposition}
\theoremstyle{definition}
\newtheorem{question}[theorem]{Question}
\newtheorem{definition}[theorem]{Definition}
\newtheorem{remark}[theorem]{Remark}
\numberwithin{equation}{section}
\newcommand{\const}{{\tilde k}}
\newcommand{\ad}{\text{ad}}
\newcommand{\ct}{\mathbb T}
\newcommand{\tr}{\mathrm{tr}}
\begin{document}
\title{Perturbations of C$^*$-algebraic invariants}

\author[E. Christensen]{Erik Christensen}
\address{\hskip-\parindent
Erik Christensen, Department of Mathematical Sciences, University of Copenhagen, Copenhagen, Denmark.}
\email{echris@math.ku.dk}

\author[A. Sinclair]{Allan Sinclair}
\address{\hskip-\parindent
Allan Sinclair, School of Mathematics, University of Edinburgh, Edinburgh, EH9 3JZ, UK.}
\email{a.sinclair@ed.ac.uk}

\author[R. Smith]{Roger R. Smith}
\address{\hskip-\parindent
Roger Smith, Department of Mathematics, Texas A{\&}M University,
College Station TX 77843-3368, U.S.A.}
\thanks{The authors gratefully acknowledge funding
 from the Edinburgh Mathematical Society and the National
 Science Foundation for partial support of this research}
\email{rsmith@math.tamu.edu}

\author[S. White]{Stuart White}
\address{\hskip-\parindent
Stuart White, Department of Mathematics, University of Glasgow, Glasgow Q12 8QW, UK.}
\email{s.white@maths.gla.ac.uk}

\date{\today}

\begin{abstract}
Kadison and Kastler introduced a metric on the set of all
C$^*$-algebras on a fixed Hilbert space. In this paper structural
properties of C$^*$-algebras which are close in this metric are examined. Our main result is that the property of having a positive answer
to Kadison's similarity problem transfers to close C$^*$-algebras.
In
establishing this result we answer questions about closeness of
commutants and tensor products when one algebra satisfies the
similarity property. We also examine $K$-theory and traces of close
C$^*$-algebras, showing that sufficiently close algebras have
isomorphic Elliott invariants when one algebra has the similarity
property.
\end{abstract}

\maketitle

\section{Introduction}

In \cite{Kadison.Kastler}, Kadison and Kastler introduced the study of uniform perturbations of operator algebras.  They considered a fixed C$^*$-algebra $C$ and equipped the set of all C$^*$-subalgebras of $C$ with a metric arising from  Hausdorff distance between the unit balls of these subalgebras.  In general terms, two C$^*$-subalgebras $A$ and $B$ of $C$ are close if elements from the unit ball of $A$ can be approximated well in the unit ball of $B$, and vice versa.  A precise definition will be given in Section \ref{Sim} below. Kadison and Kastler conjectured that sufficiently close subalgebras must be isomorphic and that this isomorphism should be spatially implemented when $C$ is faithfully represented on some Hilbert space.  In the 1970's and 1980's various cases of this conjecture were established: \cite{Raeburn.CohomologyPerturbation} resolves the problem when one algebra is an injective von Neumann algebra (see also \cite{Christensen.Perturbations1,Christensen.NearInclusions}); \cite{Christensen.NearInclusions} solves the problem when one algebra is separable and AF (see also \cite{Phillips.PerturbationAF}); \cite{Phillips.Perturbations2} examines the situation for continuous trace algebras and \cite{Khoshkam.UnitaryEquivalence} looks at extensions of some of the cases from \cite{Phillips.PerturbationAF,Christensen.NearInclusions,Phillips.Perturbations2}; and \cite{Johnson.NearInclusions} examines sub-homogeneous C$^*$-algebras. Recent progress has been made in \cite{Saw.PerturbNuclear} which gives a positive answer to the question when one algebra is separable and nuclear. In full generality \cite{Christensen.CounterExamples} provides 
counterexamples to the conjecture. These counterexamples are non-separable C$^*$-algebras and the problem remains open when $A,B$ are von Neumann algebras or separable C$^*$-algebras.
In the absence of a general isomorphism result, a naturally
arising question is whether close C$^*$-algebras must share
the same invariants. This will be a continuing theme of
the paper.  In this introduction we discuss our results in qualitative terms. Precise estimates will be given in the main text.

The principal objective of this article is to examine connections between the theory of perturbations and Kadison's similarity problem. Kadison's similarity problem was set out in \cite{Kadison.OrthRepresentations} and asks whether every bounded unital representation from a unital C$^*$-algebra $A$ into $\mathbb B(\mathcal H)$ is similar to a $^*$-representation of $A$ on $\mathcal H$.  In \cite{Haagerup.SimilarityCyclic}, Haagerup gave a positive answer to this question for cyclic representations and showed that a bounded representation $\pi$ of a C$^*$-algebra on $\mathbb B(\mathcal H)$ is similar to a $^*$-representation if and only if $\pi$ is completely bounded.  We say that $A$ has the {\emph{similarity property}} if the similarity problem has a positive answer for $A$. In \cite{Kirchberg.DerivationSimilarity}, Kirchberg showed that $A$ has the similarity property if and only if the \emph{derivation problem} also has a positive answer for $A$, that is given a $^*$-representation $\pi:A\rightarrow\mathbb B(\mathcal H)$ and a bounded $\pi$-derivation $\delta:A\rightarrow \mathbb B(\mathcal H)$, there is some $x\in\mathbb B(\mathcal H)$ such that $\delta(a)=[x,\pi(a)]=x\pi(a)-\pi(a)x$ for all $a\in A$. Such derivations are called \emph{inner}. 
There is another equivalent formulation that we now discuss.

Motivated by the similarity problem, Pisier  introduced
the notion of the {\emph{length}} $\ell(A)$ of an operator
algebra $A$ in \cite{Pisier.StPetersburg} and examined its properties in \cite{Pisier.SimilarityRemarks,Pisier.SimilarityNuclear}. This integer arises from the ability to write
matrices over $A$ as products of bounded length, where the
constituent factors  alternate between scalar matrices and
diagonal matrices over $A$ (the precise details are given in
Definition \ref{Def.Length}). If such decompositions do not exist then
$\ell(A)=\infty$, although no examples of this are
currently known. An easy consequence of finite length is
that all bounded homomorphisms of $A$ into any $\mathbb B(\mathcal
H)$ are completely bounded, which solves the similarity
problem for such algebras, and is indeed equivalent to it.
Remarkably, nuclearity is characterised by $\ell(A)\leq 2$ \cite{Pisier.SimilarityNuclear},
while all C$^*$-algebras lacking tracial states have 
length at most 3. These results are surveyed in Pisier's  monograph
\cite{Pisier.SimilarityBook}. For our purposes, the finite length property will be
a
convenient formulation of the similarity problem, and we
will be able to show that this property transfers to nearby
C$^*$-algebras. This also uses a more technical characterisation called the distance property, described below in Definition \ref{Prelim.LDK}. 

There are two open questions concerning the behaviour of the distance between algebras under standard constructions which  arise from \cite{Christensen.NearInclusions} and  are connected to the similarity property.  Given two C$^*$-algebras $A$ and $B$ on some Hilbert space $\mathcal H$, with $A$ and $B$ close, must the commutants $A'$ and $B'$ be close?  Under the same hypothesis, must the algebras $A\otimes E$ and $B\otimes E$ be close (as subalgebras of $\mathbb B(\mathcal H)\otimes E$) for any nuclear C$^*$-algebra $E$?  The work of \cite{Christensen.NearInclusions} gives positive answers to these questions provided, in today's language, both $A$ and $B$ satisfy the similarity property.  In section \ref{Length} we show that if $A$ has the similarity property and $B$ is sufficiently close to $A$, then $B$ also has the similarity property (with constants depending on the similarity length and length constant). To do this, we initially answer the first question above regarding closeness of commutants when only one algebra has the similarity property.  As a consequence, we also obtain a positive answer when one algebra has the similarity property. 

Khoshkam examined the $K$-theory of close C$^*$-algebras in \cite{Khoshkam.PerturbationK}, showing that there is a natural isomorphism between the ordered $K$-theories of sufficiently close nuclear C$^*$-algebras.  The key ingredient required for \cite{Khoshkam.PerturbationK} was that if $A$ and $B$ are close and nuclear, then the matrix algebras $\mathbb M_n(A)$ and $\mathbb M_n(B)$ are uniformly close (so that the distance between these algebras is bounded independently of $n$).  Khoshkam's isomorphism can be defined whenever this condition holds.  In particular, we  show in Corollary \ref{K.Sim} that sufficiently close C$^*$-algebras have isomorphic ordered $K$-theories provided that one algebra has the similarity property. The distance we require depends on the similarity length and constant of this algebra.  

Khoshkam's work opens the possibility of using results from Elliott's classification programme to address perturbation questions.  We discuss this topic in sections \ref{KTrace} and \ref{Kirchberg}, with the objective of showing that invariants and properties used in the classification programme transfer to sufficiently close algebras. In Lemma \ref{KTrace.Lemma} we construct an affine isomorphism between the traces on sufficiently close C$^*$-algebras. When one algebra has the similarity property, this isomorphism and the isomorphism between $K$-theories from Corollary \ref{K.Sim} respect the natural pairing between the $K_0$ and the traces. In particular there is an isomorphism between the Elliott invariants of sufficiently close nuclear C$^*$-algebras.

Section \ref{Kirchberg} gives an example of how the classification programme can be used to quickly give perturbation results.  We use Kirchberg and C. Phillips' classification of Kirchberg algebras (simple, separable, purely infinite and nuclear C$^*$-algebras) \cite{Kirchberg.Phillips,Kirchberg.ClassificationBook} to show that any C$^*$-algebra satisfying the UCT which is sufficiently close to a Kirchberg algebra with the UCT is necessarily isomorphic to it.  Given earlier results, it suffices to examine how the property of being purely infinite behaves under perturbations and we show that a C$^*$-algebra that is close to a simple and purely infinite one is also purely infinite. We do this by showing that the property of being real rank zero also transfers to sufficiently close algebras.  As in the previous section, we establish these results in as much generality as possible, not just in the nuclear setting.

The paper is structured as follows.
In section \ref{Sim} we recall the precise definition of the metric introduced by Kadison and Kastler in \cite{Kadison.Kastler} and give a detailed account of  how the similarity property gives rise to results in the theory of perturbations.  In section \ref{Tech} we establish some technical preliminaries required in our later work. In particular, we examine the behaviour of the centre valued trace and coupling constants in the context of close von Neumann algebras. These play important technical roles in section \ref{Length}, where we establish our main result that algebras close to those of finite length again have finite length and discuss its consequences. Section \ref{KTrace} examines the $K$-theory and traces of close C$^*$-algebras, while Section \ref{Kirchberg} contains our example of how the classification programme gives rise to perturbation results.  The paper ends in Section \ref{Questions} with a brief collection of open problems.

\subsection*{Acknowledgments} The authors would like to thank Joachim Zacharias for bringing \cite{Khoshkam.PerturbationK} to their attention and the referees for their careful reading of this paper and useful comments.

\section{Similarity Length and Perturbations}\label{Sim}
This section fills in the quantitive versions of the definitions from the introduction and examines the connections between perturbation theory and the similarity problem from the literature. We begin by recalling the definition of the metric $d$ on the collection of all C$^*$-subalgebras of a fixed C$^*$-algebra from \cite{Kadison.Kastler} and the notion of a near inclusion from \cite{Christensen.NearInclusions}.
\begin{definition}\label{Prelim.DefMetric}
Let $A$ and $B$ be C$^*$-subalgebras of some C$^*$-algebra $C$. Define $d(A,B)$ to be the infimum of all $\gamma>0$ with the property that given $x$ in the unit ball of $A$ or $B$, there exists $y$ in the unit ball of the other algebra with $\|x-y\|<\gamma$.
\end{definition}

\begin{definition}\label{Prelim.DefNear}
Let $A$ and $B$ be C$^*$-subalgebras of some C$^*$-algebra $C$ and let $\gamma>0$.  Write $A\subseteq_\gamma B$ if for each $x$ in the unit ball of $A$ there is $y\in B$ with $\|x-y\|\leq \gamma$.  Write $A\subset_\gamma B$ if $A\subseteq_{\gamma'}B$ for some $\gamma'<\gamma$.
\end{definition}

Note that  Definition \ref{Prelim.DefNear} does not require that $y$ lie in the unit ball of $B$. This means that the notion of distance between two C$^*$-subalgebras $A$ and $B$ defined by considering the infimum of all $\gamma$ for which $A\subseteq_\gamma B$ and $B\subseteq_\gamma A$ does not obviously satisfy the triangle inequality.  The proposition below sets out the relationships between the concepts of Definitions \ref{Prelim.DefMetric} and \ref{Prelim.DefNear}.
All are immediate consequences of the definitions and so we omit their proofs.

\begin{proposition}\label{Prelim.MetricNear}
Let $A,B$ and $C$ be $C^*$-subalgebras of some C$^*$-algebra $E$. 
\begin{itemize}
\item[\rm (i)] If $A\subseteq_\gamma B$ and $B\subseteq_\delta C$, then $A\subseteq_{\gamma+\delta(1+\gamma)}C$.\label{Prelim.MetricNear.1}
\item[\rm (ii)] If $d(A,B)\leq\gamma$, then $A\subseteq_\gamma B$ and $B\subseteq_\gamma A$.\label{Prelim.MetricNear.2}
\item[\rm (iii)] If $A\subseteq_\gamma B$ and $B\subseteq_\gamma A$, then $d(A,B)\leq 2\gamma$.\label{Prelim.MetricNear.3}
\end{itemize}
\end{proposition}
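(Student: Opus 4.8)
The plan is to treat each item by unwinding the definitions, with the only real work being the rescaling forced by the fact that Definition \ref{Prelim.DefNear} permits the approximant $y$ to lie outside the unit ball of $B$. For (i), I would start with $x$ in the unit ball of $A$ and use $A\subseteq_\gamma B$ to pick $y\in B$ with $\|x-y\|\leq\gamma$; the triangle inequality then forces $\|y\|\leq 1+\gamma$. The key observation is that $y/\|y\|$ lies in the unit ball of $B$, so $B\subseteq_\delta C$ supplies $z'\in C$ with $\|y/\|y\|-z'\|\leq\delta$, and setting $z=\|y\|z'$ gives $\|y-z\|\leq(1+\gamma)\delta$. A second application of the triangle inequality yields $\|x-z\|\leq\gamma+\delta(1+\gamma)$, which is precisely the near-inclusion $A\subseteq_{\gamma+\delta(1+\gamma)}C$ (the case $y=0$ being trivial). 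The factor $(1+\gamma)$, arising from the norm inflation of the intermediate element, is the one genuinely non-formal feature.

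For (ii), I would pass through the infimum defining $d$. Since $d(A,B)\leq\gamma$, for every $\varepsilon>0$ the value $\gamma+\varepsilon$ exceeds $d(A,B)$ and hence, by monotonicity, satisfies the defining property: for each $x$ in the unit ball of $A$ there is $y$ in the unit ball of $B$ with $\|x-y\|<\gamma+\varepsilon$. Letting $\varepsilon\to 0$ shows that the distance from $x$ to $B$ is at most $\gamma$, which is the content of $A\subseteq_\gamma B$, and the symmetric argument gives $B\subseteq_\gamma A$.

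For (iii), I would essentially reverse the rescaling used in (i). Given $x$ in the unit ball of $A$, use $A\subseteq_\gamma B$ to obtain $y\in B$ with $\|x-y\|\leq\gamma$, so again $\|y\|\leq 1+\gamma$. Retracting $y$ into the unit ball via $y'=y/\max(1,\|y\|)$ costs at most $\|y\|-1\leq\gamma$, so $y'$ lies in the unit ball of $B$ with $\|x-y'\|\leq 2\gamma$; the symmetric computation handles elements of the unit ball of $B$. Feeding these estimates into the infimum defining $d$, with the usual $\varepsilon$-slack to accommodate the strict inequality appearing there, yields $d(A,B)\leq 2\gamma$.

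I would expect no serious obstacle: the entire proposition is formal once one is careful to rescale the approximants back into the relevant unit balls and to mediate between the strict inequality in the definition of $d$ and the non-strict inequalities appearing in the near-inclusions. The most error-prone point is simply the bookkeeping of the multiplicative constant $(1+\gamma)$ in part (i).
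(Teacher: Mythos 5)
Your argument is correct and is exactly the definition-unwinding the paper has in mind: the authors declare all three parts to be immediate consequences of the definitions and omit the proofs entirely, and your rescaling bookkeeping in (i) and (iii) supplies precisely the missing details. The one point you elide---that in (ii) letting $\varepsilon\to 0$ yields only that the distance from $x$ to the unit ball of $B$ is at most $\gamma$, not an exact approximant $y$ with $\|x-y\|\leq\gamma$ when that infimum is not attained---is a boundary-case subtlety the paper itself glosses over and which is harmless everywhere the proposition is applied, since one always works with strict near-inclusions $\subset_\gamma$ or with a little slack in $\gamma$.
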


In general it is unknown whether a near inclusion $A\subset_\gamma B$ of two C$^*$-algebras on some Hilbert space $\mathcal H$ induces a near inclusion $B'\subset_{L\gamma}A'$ between the commutants
for a suitably chosen constant $L$.  Based on \cite{Christensen.Perturbations2}, a distance property $D_k$ was introduced in \cite[Definition 2.2]{Christensen.NearInclusions} which allows such a deduction to be made.  Subsequently it was shown in \cite{Christensen.ExtensionDerivations,Christensen.ExtensionDerivations2} that a C$^*$-algebra has such a distance property if and only if for every representation $\pi \colon A\to 
\mathbb B(\mathcal H)$, every derivation from $\pi(A)$ into $\mathbb B(\mathcal H)$ is inner. We now review this connection.

Let $A\subset\mathbb B(\mathcal H)$ be a C$^*$-algebra.  Given $x\in\mathbb B(\mathcal H)$ we can define a derivation $\ad(x)|_A$ on $A$ by $\ad(x)|_A(a)=[x,a]=xa-ax$. The Arveson distance formula \cite{Arveson.InterpolationNest} gives
\begin{equation}\label{Arveson}
d(x,A')=\frac{1}{2}\|\ad(x)|_A\|_{\text{cb}},\quad x\in\mathbb B(\mathcal H),
\end{equation}
see also \cite[Proposition 2.1]{Christensen.Perturbations2}. Theorem 3.2 of \cite{Christensen.ExtensionDerivations2} shows that every derivation of $A$ into $\mathbb B(\mathcal H)$ is inner (i.e. of the form $\ad(x)|_A$ for some $x\in\mathbb B(\mathcal H)$) if and only if there is some $k>0$ such that
\begin{equation}\label{PropDK}
d(x,A')\leq k\|\ad(x)|_A\|,\quad x\in\mathbb B(\mathcal H).
\end{equation}
Using the distance formula in (\ref{Arveson}), it follows that every derivation of $A$ into $\mathbb B(\mathcal H)$ is inner if and only if there is some $k>0$ such that
\begin{equation}
\|\ad(x)|_A\|_{\text{cb}}\leq 2k\|\ad(x)|_A\|,\quad x\in\mathbb B(\mathcal H).
\end{equation}
We formalise these concepts in the following definitions, the latter being \cite[Definition 2.2]{Christensen.NearInclusions}.

\begin{definition}\label{Prelim.LDK}
Let $k>0$ and let $A$ be a C$^*$-algebra.  A representation $\pi$ of $A$ on $\mathcal H$ has the \emph{local distance property} $LD_k$ if
\begin{equation}\label{Prelim.LDK.1}
d(x,\pi(A)')\leq k\|\ad(x)|_{\pi(A)}\|,\quad x\in\mathbb B(\mathcal H).
\end{equation}
If every representation of $A$ has the local distance property $LD_k$, then $A$ has the \emph{distance property} $D_k$.$\hfill\square$
\end{definition}
By the preceding discussion, $A$ has the distance property $D_k$ for some $k>0$ if and only if the derivation problem has a positive answer for $A$. Furthermore a representation $\pi$ of $A$ on $\mathcal H$ has the local distance property $LD_k$ for some $k>0$ if and only if every $\pi$-derivation is inner. A near inclusion $A\subseteq_\gamma B$ of C$^*$-algebras on a Hilbert space $\mathcal H$ induces a near inclusion of $B'$ into $A'$ when $A$ has the local distance property on $\mathcal H$. This is easily established in the proposition below. The proof is extracted from the proof of \cite[Theorem 3.1]{Christensen.NearInclusions}.
\begin{proposition}\label{LDK.Comm}
Let $A,B\subset\mathbb B(\mathcal H)$ be C$^*$-algebras with $A\subseteq_\gamma B$.  If $A\subseteq\mathbb B(\mathcal H)$ has the local distance property $LD_k$, then 
\begin{equation}\label{LDK.Comm.1}
B'\subseteq_{2k\gamma}A'.
\end{equation}
\end{proposition}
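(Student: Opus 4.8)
The plan is to take an arbitrary element $b'$ in the unit ball of $B'$ and produce a nearby element of $A'$, with the distance controlled by $2k\gamma$. The natural candidate for the operator whose commutator we wish to bound is $b'$ itself: since $b'$ commutes with all of $B$, and $A$ is nearly contained in $B$, the operator $b'$ should very nearly commute with $A$. So the first step is to estimate $\|\ad(b')|_A\|$ and show it is small, of order $\gamma$. Then the second step is to apply the local distance property $LD_k$ to the operator $b'$, which converts this small commutator norm into a small distance from $b'$ to $A'$, giving an element of $A'$ within $2k\gamma$ of $b'$.

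For the first step, let $a$ be in the unit ball of $A$. By the near inclusion $A\subseteq_\gamma B$ there is some $y\in B$ with $\|a-y\|\leq\gamma$. Since $b'\in B'$ commutes with $y$, we have $[b',a]=[b',a-y]$, and therefore
\begin{equation}
\|[b',a]\|=\|[b',a-y]\|\leq 2\|b'\|\,\|a-y\|\leq 2\gamma,
\end{equation}
using $\|b'\|\leq 1$. Taking the supremum over $a$ in the unit ball of $A$ yields $\|\ad(b')|_A\|\leq 2\gamma$. This is the key computation, and it is entirely elementary.

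For the second step, since $A$ has the local distance property $LD_k$ on $\mathcal H$, inequality (\ref{Prelim.LDK.1}) applied to $x=b'$ gives
\begin{equation}
d(b',A')\leq k\|\ad(b')|_A\|\leq k\cdot 2\gamma=2k\gamma.
\end{equation}
Thus there is an element of $A'$ within distance $2k\gamma$ of $b'$. Since $b'$ was an arbitrary element of the unit ball of $B'$, this is precisely the assertion that $B'\subseteq_{2k\gamma}A'$, as required.

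I do not anticipate a genuine obstacle here: the argument is a direct marriage of the definition of near inclusion with the local distance property, and the role of the Arveson distance formula has already been absorbed into the statement of $LD_k$. The only point requiring any care is to work with $b'$ in the \emph{unit ball} of $B'$ so that the factor $\|b'\|\leq 1$ appears in the commutator estimate, and to confirm that the definition of near inclusion (Definition \ref{Prelim.DefNear}) does not demand that the approximating element $y$ lie in the unit ball of $B$ — which it does not — so that the approximation of $a$ by $y\in B$ is legitimate. With these conventions in place the constant $2k\gamma$ emerges directly.
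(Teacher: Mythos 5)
Your proof is correct and follows essentially the same route as the paper: approximate $a$ in the unit ball of $A$ by $y\in B$, use $[b',a]=[b',a-y]$ to bound $\|\ad(b')|_A\|$ by $2\gamma$, and then apply property $LD_k$ to conclude $d(b',A')\leq 2k\gamma$. The only cosmetic difference is that the paper works with a general $x\in B'$ and carries the factor $\|x\|$ through, which amounts to the same thing.
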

\begin{proof}
Fix $x\in B'$. For $a$ in the unit ball of $A$, there is some $b\in B$ with $\|a-b\|\leq\gamma$. Then $\|[x,a]\|=\|[x,(a-b)]\|\leq 2\|x\|\gamma$. Property $LD_k$ gives $d(x,A')\leq 2k\|x\|\gamma$ and hence the near inclusion (\ref{LDK.Comm.1}).
\end{proof}

Corollary 5.4 of \cite{Christensen.ExtensionDerivations2} shows that cyclic representations of C$^*$-algebras have the local distance property $LD_{12}$ and hence solves the derivation problem for cyclic representations.  We need to consider representations with a finite set of cyclic vectors in Section \ref{Length}. The next proposition is an easy extension of \cite[Corollary 5.4]{Christensen.ExtensionDerivations2} to this case.
\begin{proposition}\label{LDK.Cyclic}
Let $\pi$ be a representation of a C$^*$-algebra $A$ on a Hilbert space $\mathcal H$. If $\pi(A)$ has a finite cyclic set of $m$ vectors, then $\pi$ has the local distance property $LD_{12m}$.
\end{proposition}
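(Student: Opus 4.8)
The plan is to reduce the statement to $m$ applications of the known single-vector result, namely that a cyclic representation satisfies $LD_{12}$ (Corollary 5.4 of \cite{Christensen.ExtensionDerivations2}), paying a factor of $12$ for each of the $m$ cyclic vectors. Unwinding Definition \ref{Prelim.LDK}, what must be shown is that for every $x\in\mathbb B(\mathcal H)$ one has $d(x,\pi(A)')\leq 12m\,\|\ad(x)|_{\pi(A)}\|$; write $c=\|\ad(x)|_{\pi(A)}\|$ and let $\{\xi_1,\dots,\xi_m\}$ be the cyclic set. The basic mechanism I would exploit throughout is the compression identity: if $p\in\pi(A)'$ is a projection then $[pxp,\pi(a)]=p[x,\pi(a)]p$, so compressing by a commutant projection never increases the derivation norm. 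This is exactly what lets the single-vector estimate be applied piece by piece.

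For the reduction I would set $\mathcal H_i=\overline{\pi(A)\xi_i}$, a $\pi(A)$-invariant subspace, so that the orthogonal projection $p_i$ onto $\mathcal H_i$ lies in $\pi(A)'$ and $\xi_i$ is a cyclic vector for the subrepresentation $\pi_i=\pi(\cdot)|_{\mathcal H_i}$. By the compression identity, $\|\ad(p_ixp_i)|_{\pi_i(A)}\|\leq c$, and Corollary 5.4 of \cite{Christensen.ExtensionDerivations2} applied to the cyclic representation $\pi_i$ produces $y_i\in\pi_i(A)'\subseteq\mathbb B(\mathcal H_i)$ with $\|p_ixp_i|_{\mathcal H_i}-y_i\|\leq 12c$. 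Since $p_i\in\pi(A)'$, the commutant of the reduced algebra satisfies $\pi_i(A)'=p_i\pi(A)'p_i|_{\mathcal H_i}$, so each $y_i$ lifts to an element of $\pi(A)'$ supported on $\mathcal H_i$. The cyclic-set hypothesis guarantees that the $\mathcal H_i$ together span $\mathcal H$, which is what makes these $m$ local estimates sufficient to reconstruct all of $x$.

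The main obstacle is the reassembly: the subspaces $\mathcal H_i$ need not be orthogonal, and—more seriously—even after passing to an orthogonal decomposition the target element of $\pi(A)'$ generally carries off-diagonal \emph{intertwiner} blocks between the pieces, whereas the diagonal compressions $p_ixp_i$ see only the block-diagonal part of $x$. Thus one cannot simply add the $y_i$; the coupling between a given cyclic subspace and the remainder must be absorbed into the corresponding $LD_{12}$ step. I would handle this by an inductive peeling on $m$: split off $\mathcal H_1$ via $p_1\in\pi(A)'$, use that a commutant element is determined by its action on the cyclic vector $\xi_1$ to approximate the entire first ``row and column'' of $x$ (not merely $p_1xp_1$) within $12c$ by a single element of $\pi(A)'$, and then apply the inductive hypothesis to the residual representation on $(1-p_1)\mathcal H$, which inherits a cyclic set of $m-1$ vectors and hence $LD_{12(m-1)}$. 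Telescoping these steps, each costing $12c$, yields $d(x,\pi(A)')\leq 12m\,c$, which is the required property $LD_{12m}$; the passage to the amplification $\pi\otimes 1_m$ with the single vector $\sum_i\xi_i\otimes e_i$ is the conceptual reason the constant is exactly $12m$.
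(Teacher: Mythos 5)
There is a genuine gap, and it sits exactly where you flagged the difficulty: the ``row and column'' step of the peeling. Corollary 5.4 of \cite{Christensen.ExtensionDerivations2}, applied to the cyclic representation $\pi_1$ on $\mathcal H_1$, controls only the diagonal compression $p_1xp_1$; it says nothing about the off-diagonal blocks $p_1x(1-p_1)$ and $(1-p_1)xp_1$. Your proposed justification --- that ``a commutant element is determined by its action on the cyclic vector $\xi_1$'' --- is false in this setting: $\xi_1$ is cyclic only for $\mathcal H_1$, so it is separating for $\pi_1(A)'$ acting on $\mathcal H_1$ at best, and an element of $\pi(A)'$ with a nonzero block $p_1y(1-p_1)$ is certainly not determined by its values on $\pi(A)\xi_1\subseteq\mathcal H_1$ (and even a uniqueness statement would not yield the quantitative bound $12c$). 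What the off-diagonal block actually requires is the approximation of an \emph{approximate intertwiner} $T=p_1x(1-p_1)$, satisfying $\|T\pi(a)-\pi(a)T\|\le c$, by an exact intertwiner in $p_1\pi(A)'(1-p_1)$; the natural way to get that is the $2\times2$ matrix trick applied to $\pi|_{\mathcal H_1}\oplus\pi|_{(1-p_1)\mathcal H}$, but that representation again needs up to $m$ cyclic vectors, so the induction is circular. As written, the telescoping bound $12mc$ is not established.

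The irony is that your closing remark is the paper's entire proof. One passes to $\mathcal H\otimes\mathbb C^m$, where $(\xi_1,\dots,\xi_m)^T$ is a single cyclic vector for $\pi(A)\otimes\mathbb M_m$; Corollary 5.4 then gives
\begin{equation*}
d\bigl(y\otimes I_{\mathbb M_m},\,\pi(A)'\otimes I_{\mathbb M_m}\bigr)\le 12\,\bigl\|\ad(y\otimes I_{\mathbb M_m})|_{\pi(A)\otimes\mathbb M_m}\bigr\|,
\end{equation*}
the left side equals $d(y,\pi(A)')$ because $(\pi(A)\otimes\mathbb M_m)'=\pi(A)'\otimes I_{\mathbb M_m}$, and the factor $m$ comes from $\|\ad(y)\otimes\mathrm{id}_{\mathbb M_m}\|\le m\|\ad(y)|_{\pi(A)}\|$. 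This amplification sees all the off-diagonal couplings at once, which is precisely what the peeling argument cannot do; you should promote that observation from ``conceptual reason'' to the actual argument and discard the induction.
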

\begin{proof}
Let $\xi_1,\ldots, \xi_m$ in $\mathcal H$ be a cyclic set
for $\pi(A)$. Then $(\xi_1,\ldots,\xi_m)^T\in \mathcal
H\otimes \mathbb C^m$ is a cyclic vector for $\pi(A)\otimes
\mathbb M_m$. Fix $y\in \mathbb B(\mathcal H)$ and let
${\text{ad}}(y)|_{\pi(A)}$ be the associated derivation on
$\pi(A)$. Then ${\text{ad}}(y\otimes I_{{\mathbb M_m}})|_{\pi(A)\otimes
\mathbb M_m}$  satisfies
\begin{equation}\label{eq12}
d(y\otimes I_{{\mathbb M_m}},\pi(A)'\otimes I_{{\mathbb M_m}})\leq
12\,\|{\text{ad}}(y\otimes
I_{{\mathbb M_m}})|_{\pi(A)\otimes \mathbb M_m}\|
\end{equation}
using \cite[Cor. 5.4]{Christensen.ExtensionDerivations2}, which is valid for algebras with cyclic vectors.
Since ${\text{ad}}(y\otimes I_{{\mathbb M_m}})={\text{ad}}(y)\otimes id_{{\mathbb M_m}}$,
the estimate
\begin{equation}
d(y,\pi(A)')\leq 12\,m\,\|{\text{ad}}(y)|_{\pi(A)}\|
\end{equation}
follows from \eqref{eq12} and the general inequality $\|\phi\otimes id_{{\mathbb M_m}}\|\leq m\|\phi\|$ for bounded maps $\phi$ between C$^*$-algebras, which is \cite[Exercise 3.10]{Paulsen.CB.Book}. This shows that we have property $LD_{12m}$.
\end{proof}

We now turn to Pisier's notion  of the length of an operator algebra, \cite{Pisier.StPetersburg}.

\begin{definition}\label{Def.Length}
Let $A$ be a C$^*$-algebra faithfully represented on $\mathbb B(\mathcal H)$.  Say that $A$ has length at most $\ell$ if there exists a constant $K>0$ such that for each $n\in\mathbb N$ and $x\in \mathbb M_n(A)$, there is an integer $N$, diagonal matrices $d_1,\dots,d_\ell\in \mathbb M_N(A)$ and scalar matrices $\lambda_0\in \mathbb M_{n, N},\lambda_1,\dots,\lambda_{\ell-1}\in \mathbb M_N,\lambda_\ell\in \mathbb M_{N,n}$ such that
\begin{equation}
x=\lambda_0d_1\lambda_1d_1\lambda_2\dots \lambda_{\ell-1}d_\ell\lambda_\ell
\end{equation}
and
\begin{equation}
\prod_{i=0}^\ell\|\lambda_i\|\prod_{i=1}^\ell\|d_i\|\leq K\|x\|.
\end{equation}
In this case we say that $A$ has \emph{length constant at most $K$}.$\hfill\square$
\end{definition}
It is easy to see that this definition does not depend on the choice of the faithful representation of $A$, but phrasing it in this fashion ensures that we do not have to distinguish between the unital and non-unital cases.  In \cite{Haagerup.SimilarityCyclic}, it is shown that a unital C$^*$-algebra $A$ has the similarity property if and only if there exists some $d\geq 1$ and positive constant $K'$ such that 
\begin{equation}\label{Sim.Degree}
\|u\|_{\text{cb}}\leq K'\|u\|^d,
\end{equation}
for all bounded unital homomorphisms $u:A\rightarrow\mathbb B(\mathcal H)$. In \cite{Pisier.StPetersburg}, Pisier shows that this happens if and only if $A$ has finite length. Furthermore, the infimum over all $d$ for which there is a constant $K'$ so that (\ref{Sim.Degree}) holds is precisely the length of $A$. One direction is easy to see: if $A$ has length at most $\ell$ and length constant $K$, then (\ref{Sim.Degree}) holds with $K'=K$ and $d=\ell$.   Note too that while (\ref{Sim.Degree}) implies that $A$ has length at most $\lfloor d\rfloor$, it does not give us information about the length constant of $A$. For more information on this topic we refer the reader to Pisier's monograph on similarity problems \cite{Pisier.SimilarityBook} and  his operator space text \cite[Chapter 27]{Pisier.OperatorSpaceBook}.

The next two propositions give quantified versions of the equivalence between the properties of satisfying the derivation problem and having finite length.  The first can be found in \cite[Section 4 (in particular Remark 4.7)]{Pisier.StPetersburg}, while the second is the derivation version of the calculation \cite[Proposition 10.6]{Pisier.SimilarityBook}. This is well known but we include the proof for completeness.
\begin{proposition}\label{Prelim.DKLength}
Let $A$ have property $D_k$ for some $k$.  Then the length of $A$ is at most $\lfloor 2k\rfloor$.
\end{proposition}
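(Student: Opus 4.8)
The plan is to route through Pisier's notion of length and the equivalence, recorded in the excerpt, that an inequality of the form \eqref{Sim.Degree} with exponent $d$ forces the length to be at most $\lfloor d\rfloor$. Thus it suffices to produce a constant $K'$ so that $\|u\|_{\mathrm{cb}}\leq K'\|u\|^{2k}$ for every bounded unital homomorphism $u\colon A\to\mathbb B(\mathcal H)$; the bound $\lfloor 2k\rfloor$ on the length of $A$ then follows immediately.

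First I would restate property $D_k$ in completely bounded language. Combining the defining inequality \eqref{Prelim.LDK.1} of $D_k$ with the Arveson distance formula \eqref{Arveson} shows that $D_k$ is equivalent to the assertion that
\begin{equation*}
\|\ad(x)|_{\pi(A)}\|_{\mathrm{cb}}\leq 2k\,\|\ad(x)|_{\pi(A)}\|
\end{equation*}
for every representation $\pi$ of $A$ and every $x\in\mathbb B(\mathcal H)$. In words, on inner derivations the completely bounded norm is dominated by $2k$ times the ordinary norm, uniformly over all representations of $A$.

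The heart of the argument is to convert this first-order (derivation) estimate into the degree inequality \eqref{Sim.Degree}; this is Pisier's calculation in \cite[Section~4, Remark~4.7]{Pisier.StPetersburg}. The mechanism is to encode a bounded unital homomorphism $u$, together with an auxiliary $^*$-representation, inside upper-triangular operator matrices so that the off-diagonal corners are genuine $\pi$-derivations, to which the uniform cb-bound from the first step applies; because $D_k$ governs \emph{every} representation of $A$, one may freely amplify by $\otimes\,\mathrm{id}_{\mathbb M_n}$ and still invoke the estimate. Tracking how the constant $2k$ propagates through this encoding produces exactly the exponent $2k$ in \eqref{Sim.Degree}, with a surviving multiplicative constant playing the role of $K'$.

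The main obstacle is precisely this conversion: the distance property is a statement about derivations, whereas the length is controlled by arbitrary bounded unital homomorphisms. The delicate points are reducing a general such $u$ to the derivation picture and book-keeping the constants so that the exponent lands precisely at $2k$ --- whereupon the degree-to-length implication recorded just after \eqref{Sim.Degree} in the excerpt upgrades exponent $2k$ to length at most $\lfloor 2k\rfloor$. I would invoke Pisier's argument for this step rather than reproduce it, its sole input being the uniform completely bounded estimate on inner derivations furnished by property $D_k$ in the first step.
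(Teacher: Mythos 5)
Your proposal matches the paper's treatment: the authors give no proof of this proposition, instead citing \cite[Section 4, in particular Remark 4.7]{Pisier.StPetersburg}, which is exactly the source you invoke for the key conversion from the uniform cb-bound on derivations (obtained, as you do, from $D_k$ via the Arveson distance formula \eqref{Arveson}) to the degree inequality \eqref{Sim.Degree} with exponent $2k$, followed by the degree-to-length implication. Your sketch of Pisier's mechanism and your bookkeeping of the constant $2k$ are consistent with the cited argument, so this is correct and essentially the same approach.
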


\begin{proposition}\label{Prelim.LengthDK}
Let $A$ be a C$^*$-algebra with length at most $\ell$ and length constant at most $K$. Then $A$ has property $D_k$ for $k=K\ell/2$.
\end{proposition}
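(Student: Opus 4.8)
The plan is to verify, for an arbitrary representation $\pi$ of $A$ on a Hilbert space $\mathcal H$, the local distance property $LD_{K\ell/2}$; since $\pi$ is arbitrary this gives $D_{K\ell/2}$ by Definition \ref{Prelim.LDK}. Writing $\delta=\ad(x)|_{\pi(A)}$ and invoking the Arveson distance formula \eqref{Arveson}, the desired inequality \eqref{Prelim.LDK.1} with $k=K\ell/2$ is equivalent to the completely bounded estimate
\begin{equation*}
\|\ad(x)|_{\pi(A)}\|_{\text{cb}}\le K\ell\,\|\ad(x)|_{\pi(A)}\|,\qquad x\in\mathbb B(\mathcal H).
\end{equation*}
Thus I would fix $x$ and bound each amplification $\delta_n=\delta\otimes\mathrm{id}_{\mathbb M_n}$ on $\mathbb M_n(\pi(A))$ by $K\ell\|\delta\|$, uniformly in $n$.

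To do this, fix $n$ and $Y$ in the unit ball of $\mathbb M_n(\pi(A))$. Since $\pi_n\colon\mathbb M_n(A)\to\mathbb M_n(\pi(A))$ is a surjective $^*$-homomorphism, I lift $Y$ to some $X\in\mathbb M_n(A)$ with $\|X\|$ as close to $\|Y\|$ as I wish, and then apply the factorisation of Definition \ref{Def.Length}, obtaining $X=\lambda_0 d_1\lambda_1\cdots\lambda_{\ell-1}d_\ell\lambda_\ell$ with $\prod_i\|\lambda_i\|\prod_i\|d_i\|\le K\|X\|$. Applying $\pi$ entrywise and using that the $\lambda_i$ have scalar entries yields a factorisation $Y=\lambda_0 D_1\lambda_1\cdots\lambda_{\ell-1}D_\ell\lambda_\ell$, where each $D_j=\pi_N(d_j)$ is a \emph{diagonal} matrix over $\pi(A)$ and the $\lambda_i$ remain scalar matrices.

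The heart of the argument is to read off $\delta_n(Y)$ from this factorisation by regarding $\delta$ as $\ad(x)$ on all of $\mathbb B(\mathcal H)$ and amplifying it to the relevant rectangular matrix spaces, where it still satisfies the Leibniz rule. Two observations drive the estimate. First, every scalar matrix $\lambda_i$ lies in the kernel of the amplified derivation, since it commutes with the implementing operator $x\otimes I$; hence only the $\ell$ diagonal factors contribute and
\begin{equation*}
\delta_n(Y)=\sum_{j=1}^{\ell}\lambda_0 D_1\lambda_1\cdots\lambda_{j-1}\,\delta(D_j)\,\lambda_j D_{j+1}\cdots D_\ell\lambda_\ell .
\end{equation*}
Second, and this is the key point, applying the amplified derivation to a diagonal matrix does not inflate its norm: because the norm of a diagonal matrix of operators is the maximum of the entrywise norms, $\|\delta(D_j)\|\le\|\delta\|\,\|D_j\|\le\|\delta\|\,\|d_j\|$. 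Bounding each of the $\ell$ summands by $\|\lambda_0\|\cdots\|\delta(D_j)\|\cdots\|\lambda_\ell\|\le\|\delta\|\prod_i\|\lambda_i\|\prod_i\|d_i\|\le K\|X\|\,\|\delta\|$ and summing gives $\|\delta_n(Y)\|\le K\ell\|\delta\|\,\|X\|$; letting $\|X\|\to\|Y\|\le 1$ yields $\|\delta_n\|\le K\ell\|\delta\|$, as required.

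I expect the principal technical care to lie in the bookkeeping of the rectangular amplifications, since the $\lambda_i$ and the partial products live in matrix spaces of mixed dimensions $n$ and $N$, and in recording that both the Leibniz rule and the vanishing of $\delta$ on scalar matrices persist at that level. The one genuinely substantive ingredient is the diagonal-norm estimate $\|\delta(D_j)\|\le\|\delta\|\,\|d_j\|$: this is exactly what prevents the completely bounded norm from growing with $n$, and is the point at which the finite length of $A$ is exploited.
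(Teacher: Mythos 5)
Your proposal is correct and follows essentially the same route as the paper's proof: factorise the matrix via Definition \ref{Def.Length}, apply the Leibniz rule to the amplified derivation $\ad(x\otimes I)$ noting that the scalar factors are annihilated, use that the derivation does not inflate the norm of a diagonal matrix, and convert the resulting cb-norm bound into the distance estimate via the Arveson formula \eqref{Arveson}. The only (harmless) variation is that you lift from $\mathbb M_n(\pi(A))$ to $\mathbb M_n(A)$ before factorising, whereas the paper applies the factorisation to $\pi(A)$ directly, implicitly using that a quotient inherits the length bound and constant.
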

\begin{proof}
Suppose that we are given a representation $\pi:A\rightarrow\mathbb B(\mathcal H)$. Fix $y\in\mathbb B(\mathcal H)$. Given $n\in\mathbb N$ and an operator $x\in\mathbb M_n(\pi(A))$,  let $N$, $\lambda_0,\dots,\lambda_{\ell}$ and $d_1,\dots,d_\ell$ be as in Definition \ref{Def.Length}. Then $\ad(y)\otimes\text{id}_{\mathbb M_n}=\ad(y\otimes I_{\mathbb M_n})$. Using the facts that $(y\otimes I_{\mathbb M_n})\lambda_0=\lambda_0(y\otimes I_{\mathbb M_N})$, $(y\otimes I_{\mathbb M_N})\lambda_{\ell}=\lambda_\ell(y\otimes I_{\mathbb M_n})$ and that $y\otimes I_{\mathbb M_N}$ commutes with each $\lambda_1,\dots,\lambda_{\ell-1}$, we can apply Leibnitz's rule to obtain
\begin{equation}
\ad(y\otimes I_{\mathbb M_n})(x)=\sum_{i=1}^{l}\lambda_0d_1\lambda_1\dots\lambda_i[(y\otimes I_{\mathbb M_N}),d_i]\lambda_{i+1}d_{i+1}\dots\lambda_{\ell-1}d_\ell\lambda_\ell.
\end{equation}
Therefore
\begin{align}
\|\ad(y)&\otimes\text{id}_{\mathbb M_n}(x)\|\notag\\
&\leq\sum_{i=1}^\ell\|\lambda_0\|\|d_1\|\|\lambda_1\|\dots\|\lambda_i\|\|[(y\otimes I_{\mathbb M_N}),d_i]\|\|\lambda_{i+1}\|\|d_{i+1}\|\dots\|\lambda_{\ell-1}\|\|d_\ell\|\|\lambda_\ell\|\notag\\
&\leq\ell \|\ad(y)|_{\pi(A)}\|\prod_{i=0}^\ell\|\lambda_i\|\prod_{i=1}^\ell\|d_i\|\leq K\ell\|\ad(y)|_{\pi(A)}\|\|x\|.
\end{align}
The result follows from (\ref{Arveson}).
 \end{proof}

We can use the factorisations of Definition \ref{Def.Length} to lift near inclusions $A\subseteq_\gamma B$ to near inclusions $A\otimes \mathbb M_n\subseteq_{L\gamma} B\otimes\mathbb M_n$ when $A$ has finite length.  The next proposition has been known to Pisier for some time and is the similarity length version of \cite[Theorem 3.1]{Christensen.NearInclusions} which obtains an analogous result for algebras using property $D_k$.
\begin{proposition}\label{Length.Matrix}
Let $A,B\subset\mathbb B(\mathcal H)$ be C$^*$-algebras with $A\subseteq_\gamma B$ for some $\gamma>0$.  Suppose that $A$ has length at most $\ell$ and length constant at most $K$.  Then $A\otimes\mathbb M_n\subseteq_\mu B\otimes\mathbb M_n$  for all $n\in\mathbb N$, where $\mu$ is given by
\begin{equation}\label{Prelim.DefMu}
\mu=K((1+\gamma)^\ell-1).
\end{equation}
\end{proposition}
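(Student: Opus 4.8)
The plan is to exploit the factorisation afforded by the length property to reduce the matricial near inclusion to the scalar one $A\subseteq_\gamma B$, applied entrywise to the diagonal factors. Fix $n\in\mathbb N$ and $x$ in the unit ball of $A\otimes\mathbb M_n=\mathbb M_n(A)$. Since $A$ has length at most $\ell$ and length constant at most $K$, Definition \ref{Def.Length} provides an integer $N$, scalar matrices $\lambda_0,\dots,\lambda_\ell$ and diagonal matrices $d_1,\dots,d_\ell\in\mathbb M_N(A)$ with
\[
x=\lambda_0 d_1\lambda_1 d_2\cdots\lambda_{\ell-1}d_\ell\lambda_\ell,
\qquad
\prod_{i=0}^\ell\|\lambda_i\|\prod_{i=1}^\ell\|d_i\|\leq K\|x\|\leq K.
\]

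First I would replace each diagonal factor $d_i$ by a diagonal matrix over $B$. Each diagonal entry $a$ of $d_i$ satisfies $\|a\|\leq\|d_i\|$, so applying the near inclusion $A\subseteq_\gamma B$ to the unit-ball element $a/\|d_i\|$ (the case $d_i=0$ being trivial) and rescaling produces an element of $B$ within $\gamma\|d_i\|$ of $a$. Collecting these entrywise approximants gives a diagonal matrix $\tilde d_i\in\mathbb M_N(B)$ with $\|d_i-\tilde d_i\|\leq\gamma\|d_i\|$, and hence $\|\tilde d_i\|\leq(1+\gamma)\|d_i\|$. Setting $y=\lambda_0\tilde d_1\lambda_1\tilde d_2\cdots\lambda_{\ell-1}\tilde d_\ell\lambda_\ell$, the scalar matrices absorb into the matrices over $B$, so that $y\in\mathbb M_n(B)=B\otimes\mathbb M_n$; it then remains only to estimate $\|x-y\|$.

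The key step is a telescoping argument in which the $d_i$ are replaced one at a time. Writing $Z_k$ for the product in which the first $k$ diagonal factors have been replaced by their tilde versions (so $Z_0=x$ and $Z_\ell=y$), we have $x-y=\sum_{k=1}^\ell(Z_{k-1}-Z_k)$, and each difference $Z_{k-1}-Z_k$ is the original product with $d_k$ replaced by the single factor $d_k-\tilde d_k$, the factors in positions $1,\dots,k-1$ being tilde versions and those in positions $k+1,\dots,\ell$ being the originals. Bounding submultiplicatively—using $\|\tilde d_i\|\leq(1+\gamma)\|d_i\|$ for the $k-1$ replaced factors in front, $\|d_k-\tilde d_k\|\leq\gamma\|d_k\|$ at position $k$, and $\|d_i\|$ for the untouched factors behind—the prefactor $\prod_{i=0}^\ell\|\lambda_i\|\prod_{i=1}^\ell\|d_i\|$ reassembles and one obtains $\|Z_{k-1}-Z_k\|\leq K\gamma(1+\gamma)^{k-1}$.

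Finally I would sum the resulting geometric series,
\[
\|x-y\|\leq\sum_{k=1}^\ell K\gamma(1+\gamma)^{k-1}
=K\gamma\cdot\frac{(1+\gamma)^\ell-1}{\gamma}
=K\bigl((1+\gamma)^\ell-1\bigr)=\mu,
\]
which establishes $A\otimes\mathbb M_n\subseteq_\mu B\otimes\mathbb M_n$. The only genuinely delicate point is the bookkeeping in the telescoping estimate: one must replace factors consistently from one side so that the $k$-th intermediate term carries exactly $k-1$ of the slightly enlarged $(1+\gamma)$ factors, producing the geometric weights $(1+\gamma)^{k-1}$ whose sum collapses neatly to $(1+\gamma)^\ell-1$. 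Everything else reduces to the entrywise application of the hypothesis $A\subseteq_\gamma B$ together with submultiplicativity of the operator norm.
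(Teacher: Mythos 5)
Your proof is correct and follows essentially the same route as the paper: approximate each diagonal factor entrywise using $A\subseteq_\gamma B$, form the corresponding product over $B$, and estimate the difference by a telescoping/geometric-series argument yielding $K((1+\gamma)^\ell-1)$. The paper compresses the telescoping step into the phrase ``an inductive calculation gives,'' which your argument simply spells out in full.
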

\begin{proof}
Fix $n\in\mathbb N$ and identify $A\otimes\mathbb M_n$ and $B\otimes\mathbb M_n$ with $\mathbb M_n(A)$ and $\mathbb M_n(B)$ respectively.  Take $x$ in the unit ball of $\mathbb M_n(A)$ and find scalar matrices $\lambda_0,\dots,\lambda_\ell$ and diagonal matrices $d_1,\dots,d_\ell$ as in Definition \ref{Def.Length}.  For each diagonal matrix $d_i\in \mathbb M_N(A)$, we can apply the near inclusion $A\subseteq_{\gamma}B$ to each entry to produce a diagonal matrix $e_i\in \mathbb M_N(B)$ with $\|d_i-e_i\|\leq\gamma\|d_i\|$. Then
\begin{equation}
y=\lambda_0e_1\lambda_1e_2\lambda_2\dots\lambda_{\ell-1}e_\ell\lambda_\ell
\end{equation}
defines an element of $\mathbb M_n(B)$ and an inductive calculation gives
\begin{equation}
\|x-y\|\leq K\Big(\gamma+\gamma(1+\gamma)+\gamma(1+\gamma)^2+\dots+\gamma(1+\gamma)^{\ell-1}\Big)=K\Big((1+\gamma)^\ell-1\Big)=\mu,
\end{equation}
which completes the proof.
\end{proof}

\begin{remark}
There is also a version of Proposition \ref{Length.Matrix} for finite sets which we state here for use in \cite{Saw.PerturbNuclear}.  Suppose that $A,B\subset\mathbb B(\mathcal H)$ are C$^*$-algebras and that $A$ has length at most $\ell$ and length constant $K$.  Given any $n\in\mathbb N$ and finite set $X$ in the unit ball of $A\otimes\mathbb M_n$, there exists a finite set $Y$ in the unit ball of $A$ such that if $Y\subseteq_\gamma B$ for some $\gamma>0$ (by which we mean that for each $y\in Y$, there is some $b\in B$ with $\|y-b\|\leq\gamma$), then $X\subseteq_\mu B\otimes\mathbb M_n$, where $\mu=K((1+\gamma)^\ell-1)$.  Note that the set $Y$ consists of all the entries of the diagonal matrices $d_i$ in the proof of Proposition \ref{Length.Matrix} and so depends only on $X$ (and not on $B$ or the value of $\gamma$).
\end{remark}

The next corollary follows from Proposition \ref{Length.Matrix} using the completely positive approximation property for nuclear C$^*$-algebras \cite{Choi.EffrosCPAP}.  The proof is identical to the deduction of Theorem 3.1 of \cite{Christensen.NearInclusions} from equation (3) on page 253 of \cite{Christensen.NearInclusions} and so is omitted.
\begin{corollary}\label{Length.TensorNuclear}
Let $A,B\subset\mathbb B(\mathcal H)$ be C$^*$-algebras with $A\subset_\gamma B$ for some $\gamma>0$. Suppose that $A$ has length at most $\ell$ and length constant at most $K$.  Given any nuclear C$^*$-algebra $E$, we have $A\otimes E\subset_\mu B\otimes E$ inside $\mathbb B(\mathcal H)\otimes E$, where $\mu=K\Big((1+\gamma)^\ell-1\Big)$.\end{corollary}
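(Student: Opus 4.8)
The plan is to exploit the uniform matrix-level near inclusion supplied by Proposition \ref{Length.Matrix} together with the completely positive approximation property of nuclear C$^*$-algebras \cite{Choi.EffrosCPAP}. First I would pass from the strict hypothesis $A\subset_\gamma B$ to a genuine near inclusion: using Definition \ref{Prelim.DefNear}, choose $\gamma'<\gamma$ with $A\subseteq_{\gamma'}B$ and set $\mu'=K((1+\gamma')^\ell-1)$, so that $\mu'<\mu$. By Proposition \ref{Length.Matrix} we then have $A\otimes\mathbb M_n\subseteq_{\mu'}B\otimes\mathbb M_n$ for \emph{every} $n$, with the constant $\mu'$ independent of $n$; this uniformity is the whole point and is what makes the nuclear tensor argument work.

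Next I would bring in nuclearity of $E$. The completely positive approximation property furnishes nets of completely positive contractions $\phi_\alpha\colon E\to\mathbb M_{n_\alpha}$ and $\psi_\alpha\colon\mathbb M_{n_\alpha}\to E$ with $\psi_\alpha\circ\phi_\alpha\to\mathrm{id}_E$ in the point-norm topology. Amplifying, the maps $\mathrm{id}_A\otimes\phi_\alpha\colon A\otimes E\to A\otimes\mathbb M_{n_\alpha}$ and $\mathrm{id}_B\otimes\psi_\alpha\colon B\otimes\mathbb M_{n_\alpha}\to B\otimes E$ are again completely positive contractions, so they carry unit balls into unit balls and never enlarge distances. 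A standard density-and-uniform-boundedness argument (the identities $(\mathrm{id}\otimes\psi_\alpha\phi_\alpha)(a\otimes e)=a\otimes\psi_\alpha\phi_\alpha(e)\to a\otimes e$ extend from the dense algebraic tensor product to all of $A\otimes E$ because every $\mathrm{id}\otimes\psi_\alpha\phi_\alpha$ is a contraction) shows $\mathrm{id}_A\otimes(\psi_\alpha\circ\phi_\alpha)\to\mathrm{id}_{A\otimes E}$ in point-norm.

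With these pieces in hand the estimate is a three-term chase. Given $z$ in the unit ball of $A\otimes E$ and $\varepsilon>0$, choose $\alpha$ with $\|z-(\mathrm{id}\otimes\psi_\alpha\phi_\alpha)(z)\|<\varepsilon$. Put $x=(\mathrm{id}_A\otimes\phi_\alpha)(z)$, an element of the unit ball of $A\otimes\mathbb M_{n_\alpha}$, and use the near inclusion of the first step to find $y\in B\otimes\mathbb M_{n_\alpha}$ with $\|x-y\|\le\mu'$. Setting $w=(\mathrm{id}_B\otimes\psi_\alpha)(y)\in B\otimes E$ and using contractivity of $\mathrm{id}\otimes\psi_\alpha$ gives $\|(\mathrm{id}\otimes\psi_\alpha\phi_\alpha)(z)-w\|=\|(\mathrm{id}\otimes\psi_\alpha)(x-y)\|\le\mu'$, whence $\|z-w\|\le\mu'+\varepsilon$. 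Since $\varepsilon$ is arbitrary and $\mu'<\mu$, this yields $A\otimes E\subseteq_{\mu''}B\otimes E$ for every $\mu''>\mu'$, and in particular the strict near inclusion $A\otimes E\subset_\mu B\otimes E$ that is required.

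I expect the only real subtlety --- and the content hidden behind the reference to ``equation (3) on page 253'' in the cited proof --- to be the interplay between the two approximations: the matrix bound $\mu'$ is uniform in $n$ while the CPAP approximation is merely point-norm, so one must verify that amplifying by $\mathrm{id}$ keeps all maps contractive, so that the point-norm error $\varepsilon$ is not amplified and the constant $\mu'$ is preserved exactly. Everything else is routine, which is presumably why the authors omit it.
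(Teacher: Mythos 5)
Your argument is correct and is exactly the route the paper has in mind: the paper omits the proof, stating that it follows from Proposition \ref{Length.Matrix} via the completely positive approximation property by the same chase as the deduction of Theorem 3.1 in \cite{Christensen.NearInclusions}, which is precisely the three-term estimate you carry out. Your handling of the strict near inclusion (passing to $\gamma'<\gamma$ and absorbing the point-norm error $\varepsilon$ into the slack $\mu-\mu'$) is also the right way to land on $A\otimes E\subset_\mu B\otimes E$ rather than merely $\subseteq_\mu$.
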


Every nuclear C$^*$-algebra has length $2$ with length constant $1$ (the similarity property for nuclear C$^*$-algebras can be found in \cite{Bunce.SimilarityNuclear}) and property $D_1$ \cite{Christensen.Perturbations2}.  In this case the $\mu$ of Proposition \ref{Length.Matrix} and Corollary \ref{Length.TensorNuclear} is given by $\mu=2\gamma+\gamma^2$ and the corollary gives better estimates than the original version \cite[Theorem 3.1]{Christensen.NearInclusions}, which uses property $D_1$ to lift near inclusions $A\subset_\gamma B$ to inclusions $A\otimes E\subset_{6\gamma}B\otimes E$, when $A$ and $E$ are nuclear.

\section{Technical Preliminaries}\label{Tech}

In this section we collect various technical results from the literature as well as establish some further  preliminaries.  We start with some standard estimates which we will use repeatedly.

\begin{proposition}\label{Prelim.Estimates}
Let $A$ and $B$ be C$^*$-subalgebras of a C$^*$-algebra $C$.
\begin{enumerate}[(i)]
\item Suppose that $A\subset_{\gamma}B$ for some $\gamma<1/2$.  Given a projection $p\in A$, there exists a projection $q\in B$ with $\|p-q\|<2\gamma$.\label{Prelim.Estimates.1}
\item Suppose that $A$ and $B$ are unital and share the same unit. Suppose that $\gamma <1$ and that $A\subset_\gamma B$. Then the following hold. 
\begin{enumerate}[(a)]
\item Given a unitary $u\in A$, there exists a unitary $v\in B$ with $\|u-v\|<\sqrt{2}\gamma$.\label{Prelim.Estimates.2}
\item Given a projection $p\in A$, there exists a projection $q\in B$ with $\|p-q\|<\gamma/\sqrt{2}$.\label{Prelim.Estimates.3}
\end{enumerate}
\item Suppose that $C$ is unital and $p,q$ are projections in $C$ with $\|p-q\|<1$. Then there is a unitary $u\in C$ with $upu^*=q$ and $\|u-I_C\|\leq\sqrt{2}\|p-q\|$.\label{Prelim.Estimates.4}
\end{enumerate}
\end{proposition}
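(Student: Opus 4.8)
The plan is to handle (i), (ii)(a) and (ii)(b) by a common recipe—approximate the given unitary or projection by an element of $B$, pass to its self-adjoint (or polar) part, and then use functional calculus to extract a genuine unitary/projection of $B$—while (iii) is proved directly. Throughout I fix $\gamma'<\gamma$ with $A\subseteq_{\gamma'}B$, so each approximation below holds with constant $\gamma'$ and the strict inequalities in the statement come from $\gamma'<\gamma$. For (i), given a projection $p\in A$ I take $b\in B$ with $\|p-b\|\le\gamma'$ and replace it by $a=(b+b^*)/2\in B$; since $p$ is self-adjoint, $\|p-a\|\le\gamma'$. Because $\gamma'<1/2$ the spectrum of $a$ lies within $\gamma'$ of $\{0,1\}$ and so splits into two clusters separated by $1/2$, and the spectral projection $q=\chi_{(1/2,\infty)}(a)\in B$ satisfies $\|a-q\|\le\gamma'$; hence $\|p-q\|\le 2\gamma'<2\gamma$.

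For (iii) I use the standard element $v=qp+(1-q)(1-p)$, which obeys $vp=qv$. A short computation gives the two identities $v^*v=vv^*=I-(p-q)^2$ and $v+v^*=2\bigl(I-(p-q)^2\bigr)$. As $\|(p-q)^2\|=\|p-q\|^2<1$, the positive element $h=(I-(p-q)^2)^{1/2}$ is invertible and commutes with $v$, $p$ and $q$, so $u=vh^{-1}$ is a unitary in $C$; using $vp=qv$ one checks $upu^*=q$. The second identity yields $\mathrm{Re}(u)=\tfrac12(u+u^*)=h$, and therefore $\|u-I\|^2=2\|I-h\|=2\bigl(1-\sqrt{1-\|p-q\|^2}\bigr)\le 2\|p-q\|^2$, the last step being $1-\sqrt{1-t}\le t$. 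This is precisely $\|u-I\|\le\sqrt2\,\|p-q\|$.

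For (ii)(a) I approximate the unitary $u\in A$ by $b\in B$ with $\|u-b\|\le\gamma'<1$. Then $\|I-u^*b\|=\|u-b\|\le\gamma'<1$, so $b$ is invertible in $B$ (spectral permanence keeps the inverse inside $B$), and $v=b|b|^{-1}\in B$ is unitary. Writing $w=u^*b$ gives $u^*v=w|w|^{-1}$ and $\|u-v\|^2=2\|I-\mathrm{Re}(u^*v)\|$, so the whole point is to bound $\mathrm{Re}(w|w|^{-1})$ below. I would isolate this as a single estimate: if $X$ is unitary and $h>0$ is invertible with $Xh+hX^*\ge(1-\gamma'^2)I+h^2$, then $\mathrm{Re}(X)\ge\sqrt{1-\gamma'^2}\,I$. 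Indeed $\|I-w\|\le\gamma'$ rearranges exactly to this inequality with $X=w|w|^{-1}$ and $h=|w|$; conjugating by $h^{-1/2}$ and invoking the operator AM–GM bound $(1-\gamma'^2)h^{-1}+h\ge 2\sqrt{1-\gamma'^2}\,I$ shows that $h^{-1/2}Xh^{1/2}$ has real part at least $\sqrt{1-\gamma'^2}\,I$. Since this element is similar to the unitary $X$ its spectrum lies on the unit circle, and its numerical range lies in $\{\mathrm{Re}\ge\sqrt{1-\gamma'^2}\}$, hence so does its spectrum; as $X$ is normal with the same spectrum, $\mathrm{Re}(X)\ge\sqrt{1-\gamma'^2}\,I\ge(1-\gamma'^2)I$, and $\|u-v\|\le\sqrt2\,\gamma'<\sqrt2\,\gamma$ follows.

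Finally, (ii)(b) is the delicate case, and the key observation is to apply everything to the symmetry $s=2p-I\in A$ (spectrum $\{-1,1\}$) rather than to $p$. Approximating $s$ by some $c_0\in B$ and passing to its self-adjoint part $c=(c_0+c_0^*)/2$, the spectrum of $c$ clusters within $\gamma'$ of $\pm1$; since these clusters are distance $2$ apart they remain separated by $0$ for every $\gamma'<1$—this is exactly where the relaxed hypothesis $\gamma<1$ (in place of $\gamma<1/2$) is used. Hence $m=\mathrm{sign}(c)\in B$ is a symmetry and $q=(I+m)/2\in B$ a projection, with $\|p-q\|=\tfrac12\|s-m\|$. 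Writing $c=m|c|$ and using $\|s-c\|\le\gamma'$, the constraint becomes $Xh+hX^*\ge(1-\gamma'^2)I+h^2$ with $X=sm$ and $h=|c|$, so the estimate of (ii)(a) applies verbatim to give $\mathrm{Re}(sm)\ge\sqrt{1-\gamma'^2}\,I$ and hence $\|s-m\|\le\sqrt2\,\gamma'$; thus $\|p-q\|\le\gamma'/\sqrt2<\gamma/\sqrt2$. The main obstacle throughout is the sharp constant $\sqrt2$: the crude triangle-inequality arguments only give bounds of order $3\gamma$, so the genuine content is the AM–GM-and-numerical-range estimate underlying (ii) and the real-part identities underlying (iii).
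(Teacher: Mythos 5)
Your proof is correct, and it is worth noting that the paper does not actually prove this proposition: it delegates (i) to \cite[Lemma 2.1]{Christensen.PerturbationsType1}, (ii) to \cite[Lemma 1.10]{Khoshkam.PerturbationK} together with the observation that the function $\alpha(t)$ there satisfies $\alpha(t)\le\sqrt2\,t$ on $[0,1)$, and (iii) to \cite[Lemma 6.2.1]{Murphy.Book}. Your arguments reconstruct essentially what those sources contain: the spectral-projection trick for (i), the unitary $v=qp+(I-q)(I-p)$ with $v^*v=vv^*=I-(p-q)^2$ for (iii), and for (ii) the polar-decomposition/unitarization argument whose sharp output $\|u-v\|^2\le 2\bigl(1-\sqrt{1-\gamma'^2}\bigr)$ is exactly Khoshkam's $\alpha(\gamma')^2$, so your AM--GM-plus-numerical-range lemma is precisely the quantitative content the paper imports as a black box. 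The reduction of (ii)(b) to (ii)(a) via the symmetry $s=2p-I$, which converts the factor $\sqrt2$ into $1/\sqrt2$, is also the standard route and explains why the hypothesis relaxes from $\gamma<1/2$ to $\gamma<1$. Two small points merit a sentence each if this were written out in full: in (ii)(a) the element $u^*b$ need not lie in $B$, so one should say explicitly that $\|e-u^*b\|<1$ (with $e=1_A=1_B$) gives invertibility of $b$ in $eCe$ and then spectral permanence of the unital inclusion $B\subseteq eCe$ puts $b^{-1}$, hence $|b|^{-1}$ and $v$, in $B$; and in (i) one should note that the spectral projection $q=\chi_{(1/2,\infty)}(a)$ lies in $B$ (not merely its unitization) because the corresponding continuous function vanishes at $0$. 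Neither is a gap.
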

\begin{proof}
(i)~~ This is Lemma 2.1 of \cite{Christensen.PerturbationsType1}.  Although the result in \cite{Christensen.PerturbationsType1} is stated for von Neumann algebras, the proof works for C$^*$-algebras.

\noindent (ii)~~ Both (a) and (b) are slightly weaker statements than those in \cite[Lemma 1.10]{Khoshkam.PerturbationK}. They follow from noting that the $\alpha(t)$ of \cite[1.9]{Khoshkam.PerturbationK} has $\alpha(t)\leq\sqrt{2}t$ for $0\leq t<1$.

\noindent (iii)~~This can be found as \cite[Lemma 6.2.1]{Murphy.Book}.
\end{proof}

As seen in the previous proposition, better constants are often obtained when the C$^*$-algebras we consider are both unital and share the same unit.  One way of reducing to this case is to simultaneously unitise all the algebras involved.  The next proposition offers another solution to this problem when one algebra is already unital.
\begin{proposition}\label{Unital}
Let $A$ and $B$ be C$^*$-subalgebras of a unital C$^*$-algebra $C$, and fix $\gamma$ satisfying  $d(A,B)<\gamma<1/4$.  Then $A$ is unital if and only if $B$ is unital. Furthermore, in this case there exists a unitary $u\in C$ with $\|u-1_C\|<2\sqrt{2}\gamma$ and $u1_Au^*=1_B$.
\end{proposition}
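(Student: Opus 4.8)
The plan is to produce an explicit candidate for the unit of $B$, verify that it genuinely is the unit, and then read off the unitary from Proposition \ref{Prelim.Estimates}(iii). Since the hypothesis $d(A,B)<\gamma$ is symmetric in $A$ and $B$, it suffices to show that if $A$ is unital then $B$ is unital, together with the estimate $\|1_A-1_B\|<2\gamma$. So suppose $A$ has a unit $1_A$, necessarily a projection in $A$. From $d(A,B)<\gamma<1/4$ we get $A\subset_\gamma B$ with $\gamma<1/2$, so Proposition \ref{Prelim.Estimates}(i) supplies a projection $q\in B$ with $\|1_A-q\|<2\gamma$. The claim is that $q=1_B$.

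The heart of the argument is to show that $q$ is a two-sided identity for $B$. First I would establish that $\|qb-b\|<4\gamma$ for every $b$ in the unit ball of $B$: using $B\subset_\gamma A$, choose $a$ in the unit ball of $A$ with $\|a-b\|<\gamma$ and write
\[
qb-b=(q-1_A)b+1_A(b-a)+(a-b),
\]
so that $1_Aa=a$ and $\|q-1_A\|<2\gamma$ give $\|qb-b\|<2\gamma+\gamma+\gamma=4\gamma$. By homogeneity $\|qb-b\|\le 4\gamma\|b\|$ for all $b\in B$, and the mirror-image computation gives $\|bq-b\|\le 4\gamma\|b\|$. Now for arbitrary $b\in B$ put $c=b-qb$; this lies in $B$ because $q,b\in B$, and $qc=qb-q^2b=0$, whence $\|qc-c\|=\|c\|$. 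But $\|qc-c\|\le 4\gamma\|c\|$ and $4\gamma<1$ is exactly the hypothesis $\gamma<1/4$, so $c=0$, i.e.\ $qb=b$. The right-handed version yields $bq=b$, so $q$ is the unit of $B$; thus $B$ is unital with $\|1_A-1_B\|=\|1_A-q\|<2\gamma$.

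With both algebras unital and $\|1_A-1_B\|<2\gamma<1$, the remaining statement is immediate: applying Proposition \ref{Prelim.Estimates}(iii) to the projections $1_A$ and $1_B$ in the unital algebra $C$ produces a unitary $u\in C$ with $u1_Au^*=1_B$ and $\|u-1_C\|\le\sqrt2\,\|1_A-1_B\|<2\sqrt2\,\gamma$, as desired. The one genuine obstacle is the middle step, since a projection merely \emph{close} to $1_A$ need not obviously act as an identity on $B$. The decisive observation is the orthogonality trick: the element $b-qb$ lies in $B$ and is killed by left multiplication by $q$, so the contractive bound $\|qb-b\|\le 4\gamma\|b\|$ with $4\gamma<1$ forces it to vanish; this is exactly where the threshold $\gamma<1/4$ is used.
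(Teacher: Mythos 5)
Your proof is correct, and it diverges from the paper's argument at exactly the step you flag as the crux: showing that the projection $q\in B$ with $\|1_A-q\|<2\gamma$ is actually the unit of $B$. The paper passes to the bidual: from $\|qb-b\|\le 4\gamma$ on the unit ball it takes an approximate identity $(e_\alpha)$ of $B$, uses weak$^*$-lower semicontinuity of the norm to get $\|1_{B^{**}}q-1_{B^{**}}\|\le 4\gamma<1$, and then concludes that the projection $1_{B^{**}}-q$ has norm less than $1$ and hence vanishes, so $q=1_{B^{**}}\in B$. Your route stays entirely inside $B$: the element $c=b-qb$ lies in $B$ and satisfies $qc=0$, so the bound $\|c\|=\|qc-c\|\le 4\gamma\|c\|$ with $4\gamma<1$ forces $c=0$, and likewise on the right (or by taking adjoints). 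This is more elementary --- no bidual, no approximate identity, no appeal to the fact that a nonzero projection has norm $1$ in $B^{**}$ --- and it isolates cleanly where the threshold $\gamma<1/4$ enters. The two arguments use the same preliminary estimate and the same final application of Proposition \ref{Prelim.Estimates}(iii), and they yield identical constants; the paper's version generalises slightly more readily to situations where one only controls $\|qb-b\|$ for $b$ in a dense or approximating set, whereas yours is the shorter and more self-contained deduction here.
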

\begin{proof}
Suppose that $A$ is unital, so that its unit, $1_A$, is a projection in $C$.  By    Proposition \ref{Prelim.Estimates} (i) there exists a projection $q\in B$ with $\|1_A-q\|<2\gamma$.  We will show that $q$ is the unit of $B$.  Take $b$ in the unit ball of $B$ and find $a$ in the unit ball of $A$ with $\|a-b\|<\gamma$. Then
\begin{align}
\|qb-b\|&\leq \|q(b-a)\|+\|(q-1_A)a\|+\|a-b\|\notag\\
&\leq \gamma+2\gamma+\gamma=4\gamma<1.
\end{align}
Now let $(e_\alpha)$ be an approximate identity for $B$.  Working in $B^{**}$, we have $e_\alpha\nearrow 1_{B^{**}}$ so that taking a weak$^*$-limit in the previous estimate gives
\begin{equation}
\|1_{B^{**}}-q\|=\|1_{B^{**}}q-1_{B^{**}}\|\leq 4\gamma<1.
\end{equation}
It follows that the projection $1_{B^{**}}-q$ is zero and so $q=1_{B^{**}}$. Accordingly $B$ is unital with unit $q=1_B$.  Since $\|1_A-q\|<2\gamma<1$, Proposition \ref{Prelim.Estimates} part (\ref{Prelim.Estimates.4}) gives a unitary $u\in C$ with $\|u-1_C\|<\sqrt{2}\|1_A-1_B\|=2\sqrt{2}\gamma$ and $u1_Au^*=1_B$.
\end{proof}

Section 5 of \cite{Christensen.NearInclusions} shows that, given a sufficiently close inclusion $Q\subseteq_\gamma B$ of C$^*$-algebras with $Q$ finite dimensional, there exists a partial isometry close to $I_Q$ with $vQv^*\subseteq B$ and with all the constants independent of the structure of $Q$. When $Q$ has small dimension, better constants can be achieved using elementary techniques going back to the work of Murray and von Neumann on hyperfinite factors, subsequently employed by Glimm \cite{Glimm} and Bratteli \cite{Bratteli.AF}.  The proposition below records the constants required when $Q$ is a copy of the  $2\times 2$ matrices.  The proof is omitted.

\begin{proposition}\label{2Times2}
Let $Q,B$ be C$^*$-subalgebras of a unital C$^*$-algebra $C$ which contain $I_C$.  Suppose that $Q$ is $^*$-isomorphic to a copy of the $2\times 2$ matrices and $Q\subset_\gamma B$ for some $\gamma<1/3\sqrt{2}$. Then there exists a unitary $v\in C^*(B,Q)$ with $vQv^*\subseteq B$ and 
\begin{equation}\label{2Times2.1}
\|v-I_C\|< (3\sqrt{2}+1)\gamma.
\end{equation}
\end{proposition}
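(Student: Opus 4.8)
The plan is to reduce the statement to constructing a system of $2\times 2$ matrix units for a copy of $\mathbb M_2$ inside $B$ that are close to those of $Q$, and then to rotate $Q$ onto this copy by a single unitary close to $I_C$. Fix matrix units $e_{11},e_{12},e_{21},e_{22}$ for $Q$, so that $e_{11}+e_{22}=I_C$ and $e_{12}$ is a partial isometry with $e_{12}e_{12}^*=e_{11}$ and $e_{12}^*e_{12}=e_{22}$. Since $Q$ and $B$ share the unit $I_C$ and $\gamma<1/(3\sqrt2)<1$, the estimates of Proposition \ref{Prelim.Estimates} are available, and I would build the new matrix units by the elementary method of Murray and von Neumann: first matching the diagonal projections, then cleaning up a single off-diagonal approximant by a polar decomposition.

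First I would match the diagonal. By Proposition \ref{Prelim.Estimates}(\ref{Prelim.Estimates.3}) there is a projection $f_{11}\in B$ with $\|e_{11}-f_{11}\|<\gamma/\sqrt2$, and then $f_{22}:=I_C-f_{11}\in B$ satisfies $\|e_{22}-f_{22}\|<\gamma/\sqrt2$. As $\|e_{11}-f_{11}\|<1$, Proposition \ref{Prelim.Estimates}(\ref{Prelim.Estimates.4}) produces a unitary $u\in C^*(e_{11},f_{11})\subseteq C^*(B,Q)$ with $ue_{11}u^*=f_{11}$ and $\|u-I_C\|\le\sqrt2\|e_{11}-f_{11}\|<\gamma$. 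Writing $\hat e_{ij}:=ue_{ij}u^*$ for the matrix units of $uQu^*$, we have $\hat e_{11}=f_{11}$ and $\hat e_{22}=f_{22}$ in $B$, so it suffices to find a unitary $v_2$ close to $I_C$ with $v_2\hat e_{ij}v_2^*\in B$; then $v:=v_2u$ satisfies $vQv^*\subseteq B$ and $\|v-I_C\|\le\|v_2-I_C\|+\|u-I_C\|<\|v_2-I_C\|+\gamma$.

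Next I would produce the off-diagonal. The rotated copy $uQu^*$ is still close to $B$, so I can choose $b_0\in B$ approximating the partial isometry $\hat e_{12}$ and compress it to $b:=f_{11}b_0f_{22}\in B$, which is close to $\hat e_{12}=f_{11}\hat e_{12}f_{22}$. Because $\hat e_{12}$ is a genuine partial isometry with initial projection $f_{22}$, it is isometric on $f_{22}\mathcal H$, so once $b$ lies within distance $1$ of $\hat e_{12}$ the operator $b$ is bounded below on $f_{22}\mathcal H$ (and $b^*$ on $f_{11}\mathcal H$); the hypothesis $\gamma<1/(3\sqrt2)$ is exactly what secures this. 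Hence $b^*b$ is invertible in the corner $f_{22}Bf_{22}$ and the polar decomposition $f_{12}:=b(b^*b)^{-1/2}$ defines a partial isometry in $B$ with $f_{12}^*f_{12}=f_{22}$ and $f_{12}f_{12}^*=f_{11}$, close to $\hat e_{12}$. Setting $f_{21}:=f_{12}^*$ completes a system of matrix units in $B$. I then set $v_2:=f_{11}\hat e_{11}+f_{12}^*\hat e_{12}=f_{11}+f_{12}^*\hat e_{12}$. Since the diagonals coincide, $v_2=f_{11}\oplus(f_{12}^*\hat e_{12})$ is block diagonal with respect to $f_{11},f_{22}$, and the computations $(f_{12}^*\hat e_{12})^*(f_{12}^*\hat e_{12})=\hat e_{12}^*f_{11}\hat e_{12}=f_{22}$ and $(f_{12}^*\hat e_{12})(f_{12}^*\hat e_{12})^*=f_{12}^*f_{11}f_{12}=f_{22}$ show that $f_{12}^*\hat e_{12}$ is a unitary of $f_{22}Bf_{22}$, so $v_2$ is a unitary of $C^*(B,Q)$. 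The intertwining identity $v_2\hat e_{ij}=f_{ij}v_2$ holds for all matrix units, whence $v_2\hat e_{ij}v_2^*=f_{ij}\in B$ and $v_2(uQu^*)v_2^*\subseteq B$.

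Finally comes the estimate. One has $\|v_2-I_C\|=\|f_{12}^*\hat e_{12}-f_{22}\|=\|(f_{12}-\hat e_{12})^*\hat e_{12}\|\le\|f_{12}-\hat e_{12}\|$, so the whole argument reduces to controlling $\|f_{12}-\hat e_{12}\|$, after which $\|v-I_C\|<\|f_{12}-\hat e_{12}\|+\gamma$. I expect this last bound to be the main obstacle: crude applications of the triangle inequality, routed through the auxiliary element $b$ and through the polar-decomposition correction $\||b|-f_{22}\|$, overshoot the target, and reaching $\|f_{12}-\hat e_{12}\|\le 3\sqrt2\,\gamma$, hence $\|v-I_C\|<(3\sqrt2+1)\gamma$, requires the sharp Murray--von Neumann/Glimm/Bratteli estimates, which exploit that $\hat e_{12}$ is exactly isometric on $f_{22}\mathcal H$ and that the displacement introduced by the polar decomposition is governed by the very same quantity rather than by twice it. The threshold $\gamma<1/(3\sqrt2)$ then plays a dual role: it keeps all the relevant positive elements invertible in their corners, so that every polar decomposition is defined, and it keeps $(3\sqrt2+1)\gamma$ within the range where the conclusion is meaningful.
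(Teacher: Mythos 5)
You have correctly identified the elementary Murray--von Neumann strategy that the paper has in mind (it omits its own proof and points to Glimm and Bratteli), and the structural part of your argument is sound: matching the diagonal projections by a unitary $u$ with $\|u-I_C\|<\gamma$ via Proposition \ref{Prelim.Estimates} (ii)(b) and (iii), producing $f_{12}\in B$ by polar decomposition of a compressed approximant, and verifying that $v_2=f_{11}+f_{12}^*\hat e_{12}$ is a unitary in $C^*(B,Q)$ intertwining the two systems of matrix units are all correct as written. The problem is that the proposition has no content beyond its constant --- Section 5 of \cite{Christensen.NearInclusions} already produces such a unitary with a worse universal constant --- and you stop exactly at the step that delivers it: the bound $\|f_{12}-\hat e_{12}\|\le 3\sqrt2\,\gamma$ is never proved, only flagged as ``the main obstacle'' requiring ``the sharp Murray--von Neumann/Glimm/Bratteli estimates.'' That estimate \emph{is} the proof; deferring it leaves a genuine gap.

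Moreover, the bookkeeping along your particular route does not reach the stated constant. You approximate $\hat e_{12}=ue_{12}u^*$ rather than $e_{12}$ itself, which already costs $\|\hat e_{12}-b\|\le 2\|u-I_C\|+\gamma<3\gamma$; the standard polar-decomposition estimates $\|\,|b|-f_{22}\|\le\|b-\hat e_{12}\|$ and $\|f_{12}-b\|=\|f_{12}(f_{22}-|b|)\|\le\|\,|b|-f_{22}\|$ then yield only $\|f_{12}-\hat e_{12}\|<6\gamma$, hence $\|v-I_C\|<7\gamma$, overshooting $(3\sqrt2+1)\gamma\approx 5.24\gamma$. No slack can be recovered from $v_2$ either, since in fact $\|v_2-I_C\|=\|f_{12}-\hat e_{12}\|$ exactly: both squares equal $\|2f_{22}-c-c^*\|$ for the corner unitary $c=f_{12}^*\hat e_{12}$. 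To have a chance at the claimed constant you should at least approximate $e_{12}$ directly by some $b_0\in B$ and compress by $f_{11},f_{22}$, which costs only $\|f_{11}b_0f_{22}-e_{12}\|<(1+\sqrt2)\gamma$ because $\|f_{ii}-e_{ii}\|<\gamma/\sqrt2$, and you still need a polar-decomposition estimate genuinely sharper than the naive doubling above. Until such an estimate is written down, the argument is incomplete at precisely the point the proposition exists to record.
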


In their pioneering article \cite{Kadison.Kastler}, Kadison and Kastler showed that the type decomposition of a von Neumann algebra is stable under small perturbations. Many of our subsequent arguments use a type decomposition approach, as we handle the finite type $\mathrm{I}$, the type $\mathrm{II}_1$ and the infinite type cases separately. The constants we can achieve will depend on the constants appearing in the stability of the type decomposition.  These constants can now be improved using techniques which were not available in \cite{Kadison.Kastler}. We shall demonstrate this below in the cases we need. We will also collect some reduction arguments for later use.  Our first lemma uses results from \cite{Christensen.PerturbationsType1} and shows that, when considering close von Neumann algebras, we can reduce to the case where they have common centres.

\begin{lemma}\label{Centre}
Let $M,N\subset \mathbb B(\mathcal H)$ be von Neumann algebras whose centres are denoted $Z(M)$ and $Z(N)$ respectively.  Suppose that $d(M,N)\leq\gamma$ for some $\gamma<1/6$. Then there exists a unitary $u\in (Z(M)\cup Z(N))''$ such that $uZ(M)u^*=Z(uMu^*)=Z(N)$ and 
\begin{equation}\label{eq3.4}
\|u-I_{\mathcal H}\|\leq 2^{5/2}\gamma(1+(1-16\gamma^2)^{1/2})^{-1/2}\leq 5\gamma.
\end{equation}
In particular $d(uMu^*,N)\leq 11\gamma$ and $uMu^*$ and $N$ have common centre.
\end{lemma}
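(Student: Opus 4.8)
The plan is to produce the unitary $u$ by conjugating one centre onto the other, using the stability of centres under perturbation established in \cite{Christensen.PerturbationsType1}. The centres $Z(M)$ and $Z(N)$ are abelian von Neumann algebras, and the hypothesis $d(M,N)\leq\gamma$ should force them to be close. The first step is therefore to show that $Z(M)$ and $Z(N)$ are close in an appropriate sense. This is subtle: even though $M$ and $N$ are close, the centre is not a continuous construction in general. The key input must be a result from \cite{Christensen.PerturbationsType1} that controls how far the centre can move; I would expect a statement giving $d(Z(M),Z(N))\leq c\gamma$ (or a near-inclusion version) for some explicit constant $c$. Granting such a bound, the two abelian algebras $Z(M)$ and $Z(N)$ are close inside $(Z(M)\cup Z(N))''$.

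Once $Z(M)$ and $Z(N)$ are known to be close, the second step is to find a unitary implementing a $^*$-isomorphism between them that lies in the von Neumann algebra they generate and is close to the identity. For this I would invoke a spatial perturbation result for abelian (or more generally close) von Neumann algebras—again drawing on \cite{Christensen.PerturbationsType1}—to obtain $u\in(Z(M)\cup Z(N))''$ with $uZ(M)u^*=Z(N)$. The identity $Z(uMu^*)=uZ(M)u^*$ holds because the centre is an intrinsic object transported by conjugation, so $uZ(M)u^*=Z(uMu^*)=Z(N)$ follows automatically. The explicit norm estimate $\|u-I_{\mathcal H}\|\leq 2^{5/2}\gamma(1+(1-16\gamma^2)^{1/2})^{-1/2}$ should come from the quantitative form of the spatial implementation, where the factor $16\gamma^2$ under the square root signals that the construction passes through a bound of the shape $\|p-q\|\leq 4\gamma$ on close projections in the centres and then applies an estimate of the type in Proposition \ref{Prelim.Estimates}(iii) with the optimal $\sqrt2$ constant; the simplification to $\leq 5\gamma$ is then a routine calculus check using $\gamma<1/6$.

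For the final assertion, the third step estimates $d(uMu^*,N)$ by the triangle-type inequality for the metric. Since $u$ is close to the identity, conjugation by $u$ moves $M$ only a little: $d(uMu^*,M)$ is controlled by $\|u-I_{\mathcal H}\|$, and combining with $d(M,N)\leq\gamma$ through Proposition \ref{Prelim.MetricNear} gives a bound on $d(uMu^*,N)$. Explicitly, conjugating the unit ball of $M$ by $u$ perturbs each element by at most $2\|u-I_{\mathcal H}\|\leq 10\gamma$, so $d(uMu^*,M)\leq 10\gamma$ and hence $d(uMu^*,N)\leq 10\gamma+\gamma=11\gamma$. That $uMu^*$ and $N$ share a common centre is immediate from $Z(uMu^*)=Z(N)$, which was arranged in the second step.

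The main obstacle will be the second step: constructing the spatial unitary inside $(Z(M)\cup Z(N))''$ with the sharp constant, rather than merely some unitary in $\mathbb B(\mathcal H)$ with a crude bound. The requirement that $u$ lie in the von Neumann algebra generated by the two centres is essential for the later arguments (it guarantees $u$ commutes with enough to keep the type decomposition intact), and achieving the precise estimate in \eqref{eq3.4} demands the optimal implementation result for close abelian von Neumann algebras together with careful bookkeeping of the constant propagating from the closeness of the centres. I expect the proof to cite \cite{Christensen.PerturbationsType1} for both the closeness of centres and the spatial implementation, and then to verify the two displayed inequalities by direct estimation.
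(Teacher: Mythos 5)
Your proposal matches the paper's proof in all essentials: the paper cites Lemma 2.2 of \cite{Christensen.PerturbationsType1} to bound the Hausdorff distance between the projection lattices of $Z(M)$ and $Z(N)$ by $2\gamma$ (rather than a distance between the centres as algebras, but this is the same idea), then applies Theorem 3.2 of that reference to produce the unitary $u\in(Z(M)\cup Z(N))''$ with the stated norm bound, and finishes with exactly your triangle-inequality computation $d(uMu^*,N)\leq 2\|u-I_{\mathcal H}\|+\gamma\leq 11\gamma$. The only work the paper does beyond citation is the routine calculus check that the explicit expression in \eqref{eq3.4} is at most $5\gamma$ for $\gamma<1/6$, which you also anticipated.
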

\begin{proof}
As $\gamma<1/6$, Lemma 2.2 of \cite{Christensen.PerturbationsType1} shows that the Hausdorff distance between the projections in $Z(M)$ and the projections in $Z(N)$ is at most $2\gamma$.  As $2\gamma<1/2$, the result follows from Theorem 3.2 of \cite{Christensen.PerturbationsType1}. For $\gamma< 1/6$, direct computation  gives
the second inequality of \eqref{eq3.4}.
The estimate
\begin{equation}
d(uMu^*,N)\leq d(uMu^*,M)+d(M,N)\leq 2\|u-I_{\mathcal H}\|+\gamma\leq 11\gamma
\end{equation}
follows.
\end{proof}

Once two von Neumann algebras have the same centre, we can directly compare their type decompositions. 
\begin{lemma}\label{Decomposition}
Let $M,N \subseteq\mathbb B(\mathcal H)$ be von Neumann algebras with a common centre $Z$, and suppose that $d(M,N) < 1/10$. If $z_1,z_2,z_3\in Z$ are central projections so that
\begin{equation}
M=Mz_1\oplus Mz_2 \oplus Mz_3
\end{equation}
is the decomposition of $M$ into respectively the finite type {\rm I}, type $\text{\rm II}_1$ and infinite parts, then
\begin{equation}
N=Nz_1 \oplus Nz_2 \oplus Nz_3
\end{equation}
is the corresponding decomposition for $N$.
\end{lemma}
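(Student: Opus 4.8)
The plan is to exploit the common centre to reduce the statement to two symmetric transfer principles, and then to use the symmetry of the hypothesis in $M$ and $N$ to force the relevant central projections to coincide \emph{exactly} rather than merely to be close. First I note that cutting by a central projection does not increase the distance: if $z\in Z$ and $x$ lies in the unit ball of $Mz$, then $x=xz\in M$, so there is $y$ in the unit ball of $N$ with $\|x-y\|<1/10$, whence $yz$ lies in the unit ball of $Nz$ and $\|x-yz\|=\|(x-y)z\|<1/10$; the symmetric estimate gives $d(Mz,Nz)\le d(M,N)<1/10$. Since $z_1,z_2,z_3\in Z=Z(M)=Z(N)$, the expression $N=Nz_1\oplus Nz_2\oplus Nz_3$ is a genuine decomposition of $N$ along central projections; because the finite type $\mathrm I$, type $\mathrm{II}_1$ and properly infinite (``infinite'') parts constitute the unique such decomposition of any von Neumann algebra, it suffices to show that $Nz_1$ is finite type $\mathrm I$, $Nz_2$ is type $\mathrm{II}_1$, and $Nz_3$ is properly infinite.

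Write $z_3^N,z_2^N,z_1^N\in Z$ for the canonical central projections of the type decomposition of $N$. I will establish, for every $c\in Z$, the transfer principles (I) if $Mc$ is properly infinite then so is $Nc$, and (II) if $c$ lies under the common finite central projection and $Mc$ is continuous (type $\mathrm{II}_1$) then so is $Nc$. Because the hypotheses are symmetric in $M$ and $N$, each principle also holds with $M$ and $N$ exchanged. Granting these, the equality of the decompositions is immediate: applying (I) with $c=z_3$ gives $z_3\le z_3^N$, the symmetric form with $c=z_3^N$ gives $z_3^N\le z_3$, so $z_3=z_3^N$ and the finite parts $z_f:=I-z_3=I-z_3^N$ agree; cutting by $z_f$ and arguing identically with (II) yields $z_2=z_2^N$, and hence $z_1=z_1^N$.

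Both transfer principles rest on the fact that the defining Murray--von Neumann data of a type class consist of finitely many projections and partial isometries, that these perturb across a small-distance pair of algebras, and that their central supports, lying in the common centre $Z$, are matched automatically. A projection in one algebra is approximated by a genuine projection in the other via Proposition \ref{Prelim.Estimates}(\ref{Prelim.Estimates.1}); a partial isometry $v$ with $v^*v=e$, $vv^*=f$ is approximated by an element of the other algebra and then corrected, through its polar decomposition, to a partial isometry with the prescribed initial projection and with final projection a genuine projection close to $f$, after which nearly orthogonal projections are adjusted to orthogonal ones and projections at distance $<1$ are conjugated by a unitary close to the identity (Proposition \ref{Prelim.Estimates}(\ref{Prelim.Estimates.4})). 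For (I) I use that $Mc$ is properly infinite exactly when its identity $c$ admits orthogonal subprojections $e_1,e_2\le c$ with $e_1\sim e_2\sim c$; transferring and correcting the two implementing partial isometries into $Nc$ exhibits $c$ as properly infinite there. For (II) I use that a finite von Neumann algebra is type $\mathrm{II}_1$ precisely when every nonzero projection is halvable: given a nonzero projection $e'\in Nc$, I approximate it by a projection $e\in Mc$, halve $e$ in $Mc$, transfer the halving into $Nc$ to obtain a halving of some projection $e''$ close to $e'$, and finally transport it onto $e'$ by conjugating with a unitary of $Nc$ implementing $e''\sim e'$; as $e'$ was arbitrary, $Nc$ is continuous.

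The main obstacle is the perturbation bookkeeping hidden in the word ``correcting'': one must check that the approximate projections and partial isometries produced above can be adjusted to exact ones while preserving orthogonality and the equivalences $e_1\sim e_2\sim c$ for (I) and $f'\sim e'-f'$ for (II), and that in (II) the halving can be carried onto the prescribed projection $e'$ rather than merely onto a nearby one. All of this is routine but constant-sensitive, and the remaining task is to confirm that $d(M,N)<1/10$ is comfortably small enough for each correction step; the bound is certainly not optimised. Crucially, no control on how a type decomposition might degrade under perturbation is needed, because the common centre turns the problem into one of comparing canonical central projections inside the single abelian algebra $Z$, whereupon the symmetry argument forces them to coincide exactly.
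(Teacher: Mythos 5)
Your proof is correct in outline and takes a genuinely different route from the paper's. The paper argues both halves by contradiction using a single transferred witness: for the infinite/finite split it approximates a non-unitary isometry $v\in N$ by $x\in M$ with $\|x-v\|<1/10$, observes that $x$ is bounded below so $x|x|^{-1}$ is an isometry in the finite algebra $M$, hence a unitary, hence $x$ and then $v$ are invertible --- a contradiction requiring no projection surgery at all; for the type $\mathrm I$ versus $\mathrm{II}_1$ split it transfers a single abelian projection $p$ to $q$ with $\|p-q\|<1/(10\sqrt2)$ (using the sharper unital estimate of Proposition \ref{Prelim.Estimates}(ii)(b)) and invokes \cite[Lemma 2.3]{Christensen.PerturbationsType1}, which says a corner close to an abelian corner is abelian. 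You instead reconstruct the full Murray--von~Neumann witnessing data (halvings of $c$, respectively of an arbitrary projection of $Nc$) in the other algebra. Your route is self-contained where the paper outsources the abelian-corner step, and your symmetry argument for forcing exact equality of the central projections is the same as the paper's implicit one; what the paper's choices buy is that each contradiction needs only one approximation and so the constant $1/10$ is verified in a line or two.

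The one point you must not leave as ``routine'': the constant bookkeeping in your step (II) is not comfortable at $d(M,N)<1/10$ --- it is exactly at the boundary. With $\gamma<1/10$ each transferred projection moves by less than $2\gamma<1/5$, so after transferring $e'$ to $e$, halving $e=f_1+f_2$ in $Mc$, transferring $f_1,f_2$ to $f_1',f_2'$ and replacing $f_2'$ by the range projection $f_2''$ of $(c-f_1')f_2'$ (for which $\|f_2''-f_2'\|=\|f_1'f_2'\|<2/5$ when $\|f_1'f_2'\|<1$), the projection $e''=f_1'+f_2''$ satisfies only
\begin{equation*}
\|e''-e'\|<\tfrac15+\tfrac25+\tfrac15+\tfrac15=1,
\end{equation*}
which is precisely the largest bound for which the unitary of Proposition \ref{Prelim.Estimates}(iii) transporting the halving onto $e'$ exists, and this relies on the exact identity $\|f_2''-f_2'\|=\|f_1'f_2'\|$ rather than a cruder bound such as $\|f_1'f_2'\|/\sqrt{1-\|f_1'f_2'\|^2}$, which would push the total over $1$. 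So your argument does close, but only after this verification, which is the actual content of the lemma at the stated threshold and should be written out.
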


\begin{proof}
Let $N=N\tilde z_1 \oplus N\tilde z_2 \oplus N\tilde z_3$ be the corresponding decomposition for $N$. We first show that $z_3=\tilde z_3$. If this is not the case then, without loss of generality, there is a non-zero central projection $z$ such that $Mz$ is finite and $Nz$ is infinite. By cutting by $z$, we may then assume that $M$ is finite and $N$ is infinite. Let $v\in N$ be an isometry which is not a unitary, and choose $x\in M$ with $\|x-v\|<10^{-1}$ and $\|x\|\le 1$. For each $\xi\in\mathcal H$,
\begin{equation}\label{eq40.1}
 (1-10^{-1})\|\xi\| \le \|x\xi\| \le \|\xi\|
\end{equation}
and so
\begin{equation}\label{eq40.2}
 (1-10^{-1})^2 I \le x^*x \le I.
\end{equation}
Thus $|x|$ is invertible, so $u = x|x|^{-1} \in M$ satisfies $u^*u=I$. By finiteness of $M$, $u$ is a unitary, and so $x$ is invertible with $\|x^{-1}\| \le (1-10^{-1})^{-1}$ from \eqref{eq40.1}. Then
\begin{equation}\label{eq40.3}
 \|I-x^{-1}v\| = \|x^{-1}(x-v)\| \le (1-10^{-1})^{-1} 10^{-1} < 1,
\end{equation}
showing that $x^{-1}v$ and hence $v$ are invertible. This contradicts the assumption that $v$ is not a unitary and establishes that $z_3=\tilde z_3$.

After cutting by $(I-z_3)$ we may now assume that both $M$ and $N$ are direct sums of finite type I parts and type $\text{II}_1$ parts, so that $z_1+z_2 = \tilde z_1+\tilde z_2 = I$. To establish that $z_1=\tilde z_1$ we again argue by contradiction by assuming that there is a central projection $z$ so that $Mz$ is finite type $\mathrm{I}$ and $Nz$ is type $\text{II}_1$, and after cutting by $z$ we can make these assumptions on $M$ and $N$. Let $p\in M$ be a non-zero abelian projection and choose,
by     Proposition \ref{Prelim.Estimates} (ii b), a projection $q\in N$ with $\|p-q\| < 1/(10\sqrt 2)$.  Thus $d(pMp,qNq)\leq d(M,N)+2\|p-q\|\leq (1+\sqrt{2})/10<1/4$.  By \cite[Lemma 2.3]{Christensen.PerturbationsType1}, $qNq$ is abelian and so $q\in N$ is a non-zero abelian projection. This contradiction proves the result.
\end{proof}

Given a finite von Neumann algebra $M$, we write $\ct_M$ for the centre valued trace on $M$.  The next lemma examines the behaviour of centre valued traces on close projections.  We need it both in Section \ref{Length} for our analysis of C$^*$-algebras close to those of finite length and in Section \ref{KTrace} to examine traces of close C$^*$-algebras.
The next result and some succeeding ones are phrased in terms of near containments rather than distances in order to obtain better estimates.
\begin{lemma}\label{CentralTrace}
Let $M$ and $N$ be finite von Neumann algebras acting on a Hilbert space $\mathcal H$ with common centre $Z=Z(M)=Z(N)$. Suppose that $M\subseteq_\gamma N$ and $N\subseteq_\gamma M$ for some constant $\gamma<1/200$. If $p\in M$ and $q\in N$ are projections  with $\|p-q\|<1/2$, then $\ct_M(p)=\ct_N(q)$.
\end{lemma}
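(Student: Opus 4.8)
The plan is to compare the two $Z$-valued traces by combining the Dixmier averaging characterisation of the centre-valued trace with the transfer of unitaries across the near inclusions, first obtaining an approximate equality and then upgrading it to an exact one after splitting along the type decomposition. The finite von Neumann algebras $M,N$ share the unit $I_{\mathcal H}$, so all of the ``common unit'' hypotheses in Proposition \ref{Prelim.Estimates} are available.

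First I would reduce to the case where the two projections are very close. Applying Proposition \ref{Prelim.Estimates}(ii)(b) to the near inclusion $M\subseteq_\gamma N$ produces a projection $q_0\in N$ with $\|p-q_0\|$ of order $\gamma$. Since $\|q-q_0\|\le\|q-p\|+\|p-q_0\|<1$ and $q,q_0$ both lie in $N$, they are conjugate by a unitary of $N$, so $\ct_N(q)=\ct_N(q_0)$ and it suffices to treat the pair $(p,q_0)$ with $\|p-q_0\|$ small.

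The engine of the estimate is Dixmier's theorem: in the finite algebra $M$ the central element $\ct_M(p)$ is a norm limit of averages $\sum_i\lambda_i u_ipu_i^*$ with $u_i\in\mathcal U(M)$, $\lambda_i\ge 0$ and $\sum_i\lambda_i=1$. Using Proposition \ref{Prelim.Estimates}(ii)(a) to replace each $u_i$ by a unitary $v_i\in N$ with $\|u_i-v_i\|<\sqrt2\gamma$, and replacing $p$ by $q_0$, the resulting average $\sum_i\lambda_i v_iq_0v_i^*\in N$ lies within a fixed multiple of $\gamma$ of $\ct_M(p)$. Now apply $\ct_N$: it is norm contractive, $Z$-linear and tracial, so $\ct_N(v_iq_0v_i^*)=\ct_N(q_0)$ for every $i$, while $\ct_N$ fixes the central element $\ct_M(p)$. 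Hence $\|\ct_M(p)-\ct_N(q_0)\|=\|\ct_N(\ct_M(p)-\sum_i\lambda_i v_iq_0v_i^*)\|\le C\gamma$ for an absolute constant $C$.

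The main obstacle is passing from this approximate bound to the exact identity $\ct_M(p)=\ct_N(q_0)$, and here I would split $M$ and, by Lemma \ref{Decomposition}, $N$ along the common central projections into a finite type $\mathrm I$ part and a type $\mathrm{II}_1$ part. On the finite type $\mathrm I$ part the centre-valued trace of a projection records its pointwise rank divided by the homogeneity degree, and is therefore integer-quantised; I would match matrix units across the two algebras using the finite-dimensional perturbation estimates (Proposition \ref{2Times2} and the analogue for finite-dimensional $Q$ from Section 5 of \cite{Christensen.NearInclusions}, whose constants are independent of the structure of $Q$), thereby forcing the pointwise ranks of $p$ and $q_0$ to coincide exactly rather than merely approximately. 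The genuinely delicate case is type $\mathrm{II}_1$, where the trace takes a continuum of values and no integrality is available, so the $C\gamma$ bound alone cannot close the gap; there I would instead aim to realise $p$ and $q_0$ as Murray--von Neumann equivalent projections inside a single finite von Neumann algebra whose centre is exactly $Z$, so that uniqueness of the centre-valued trace identifies $\ct_M$ and $\ct_N$ with its restrictions and $\|p-q_0\|<1$ yields the equality. Controlling the centre of such a common algebra, or equivalently transferring the comparisons $p\precsim q_0$ and $q_0\precsim p$ across the near inclusions without the loss incurred by matrix amplification, is the crux of the argument and the point at which the smallness of $\gamma$ must be used decisively.
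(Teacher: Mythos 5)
Your Dixmier-averaging step is sound and does yield a uniform bound $\|\ct_M(p)-\ct_N(q)\|\le C\gamma$ for an absolute constant $C$ (of order $(2\sqrt2+1/\sqrt2)\gamma$ after your reduction to a nearby $q_0$), and your reduction replacing $q$ by $q_0$ via unitary equivalence in $N$ is fine. But the proof is not complete, because the upgrade from this approximate equality to the exact identity is exactly what is missing, and in both halves of the type decomposition. On the finite type $\mathrm I$ part, quantisation of $\ct_M(p)$ in steps of $1/n$ on a homogeneous $\mathrm I_n$ summand only beats the fixed error $C\gamma$ when $n$ is small; for large homogeneity degree the quantisation gap is smaller than $C\gamma$, and ``matching matrix units'' of the two algebras does not by itself say anything about the rank of $p$ relative to those matrix units. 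The paper instead uses that finite type $\mathrm I$ algebras are injective and invokes \cite[Corollary 4.4]{Christensen.NearInclusions} to produce a surjective isomorphism $\phi\colon M\to N$ with $\|\phi(x)-x\|\le 100\gamma\|x\|$ fixing $Z$ pointwise, whence $\|\phi(p)-q\|<1$ gives equivalence of $\phi(p)$ and $q$ in $N$ and hence equality of traces.

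More seriously, you explicitly leave the type $\mathrm{II}_1$ case open (``the crux of the argument''), and that is where the lemma's real content lies; a reviewer cannot accept an ``aim to realise $p$ and $q_0$ as equivalent in a common algebra'' without a construction. The paper's device is worth knowing: assume $\ct_M(p)\ne\ct_N(q)$, cut by a central projection so that $\ct_N(q)\le cI<dI\le\ct_M(p)$, choose $n$ with $1/n<d-c$, and cut further so that $(j/n)I\le\ct_M(p)<((j+1)/n)I$. One then builds a \emph{finite-dimensional} subalgebra $Q\subseteq M$ containing $p$, generated by an $n\times n$ matrix unit system whose diagonal projections all have centre-valued trace $I/n$, with $j$ of them lying under $p$. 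Applying \cite[Theorem 4.3]{Christensen.NearInclusions} to the near inclusion $Q\subset_\gamma N$ gives a $*$-monomorphism $\phi\colon Q\to N$ with $\|\phi(x)-x\|\le100\gamma\|x\|$; the images of the $n$ diagonal projections are equivalent in $N$ and sum to $I$, so each has trace $I/n$, while $\|\phi(p)-q\|<1$ forces $\ct_N(q)=\ct_N(\phi(p))\ge (j/n)I>\ct_M(p)-(1/n)I$, contradicting $1/n<d-c$. This is precisely the ``decisive use of the smallness of $\gamma$'' you were looking for (namely $100\gamma<1/2$), and it replaces the trace-comparison problem you could not close by a transfer of a finite-dimensional subalgebra containing $p$. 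Without this (or an equivalent mechanism), your argument establishes only $\|\ct_M(p)-\ct_N(q)\|\le C\gamma$, which is strictly weaker than the lemma and insufficient for its applications in Lemma \ref{CouplingConstant}.
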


\begin{proof}
By Lemma \ref{Decomposition}, there is a central projection $z$ such that $Mz$ and $Nz$ are  finite and of type $\mathrm{I}$ while $M(1-z)$ and $N(1-z)$ are type $\text{II}_1$. It suffices to consider these parts separately, so we initially assume that $M$ and $N$ are finite type $\mathrm{I}$, and thus injective.

Since  both algebras are injective, the bound on $\gamma$ allows us to apply  
\cite[Corollary 4.4]{Christensen.NearInclusions} to conclude that there is a surjective isomorphism $\phi:M\rightarrow N$ satisfying $\|\phi(x)-x\| \le 100\gamma\|x\|< (1/2)\|x\|$. Accordingly
\begin{equation}\label{eq40.13}
\|\phi(p)-q\|< 100/200 + 1/2 = 1
\end{equation}
so $\phi(p)$ and $q$ are equivalent projections in $N$. Thus $\ct_N(\phi(p))=\ct_N(q)$.
Now $\phi$ maps $Z$ to $Z$ and also fixes the elements of $Z$ pointwise because central projections $z\in Z$ satisfy $\|\phi(z)-z\| \le 1/2$. Thus 
\begin{equation}
\ct_M(x)=\phi(\ct_M(x))=\ct_{\phi(M)}(\phi(x))=\ct_N(\phi(x)),\quad x\in M.
\end{equation}
Then $\ct_M(p)=\ct_N(\phi(p))$ and the result is proved in this case.

Now assume that $M$ and $N$ are both type $\text{II}_1$ and, to derive a contradiction, suppose that $\ct_M(p) \ne\ct_N(q)$. By cutting by a suitable central projection, we may assume without loss of generality that there exist constants $0\leq c<d$ such that 
\begin{equation}\label{eq40.14}
\ct_N(q) \le cI < dI \leq\ct_M(p).
\end{equation}
Choose an integer $n$ satisfying $1/n < d-c$. Then $[d,1]$ is covered by the collection of intervals $[j/n, (j+1)/n)$, $1\le j\le n$, so the spectral projections of $\ct_M(p)$ for these intervals cannot all be 0. Choose one that is non-zero and cut by this central projection. This allows us to make the further assumption that
\begin{equation}\label{eq40.15}
(j/n)I \leq\ct_M(p) < ((j+1)/n)I
\end{equation}
for some integer $j\in \{1,2,\ldots, n\}$. The case $j=n$ implies that $p=1$, whereupon $q=1$ follows from $\|p-q\| < 1/2$, and a contradiction is reached. Thus we can assume $j<n$. We may then choose orthogonal projections $e_1,\ldots, e_j\in M$ satisfying $e_i\leq p$ and $\ct_M(e_i)=I/n$, $1\le i\le j$. Since $\ct_{M}(I-p) > ((n-j-1)/n)I$, we may also choose orthogonal projections $f_i\in M$, $1\le i \le n-j-1$, satisfying $\ct_M(f_i) = I/n$ and $f_i\le I-p$. Note that there may be no $f_i$'s if $j=n-1$. Let $h = I - \sum^j_{i=1} e_i - \sum^{n-j-1}_{i=1} f_i$, which also has centre valued trace $I/n$. Then $\{e_1,\ldots, e_j, h, f_1,\ldots, f_{n-j-1}\}$ is a set of $n$ equivalent projections in $M$ with sum $I$, so lie in a matrix subalgebra $F\subset M$ as the minimal diagonal projections. Let $h_1 = p - \sum^j_{i=1}e_i$ and $h_2 = (1-p) - \sum^{n-j-1}_{i=1} f_i$. Then $h_1+h_2=h$ and $h_1,h_2\le h$. Thus the algebra $Q$ generated by $h_1,h_2$ and $F$ is finite dimensional, so injective, and contains $p$. By \cite[Theorem 4.3]{Christensen.NearInclusions} there is a $*$-isomorphism $\phi$ of $Q$ into $N$ satisfying $\|\phi(x) - x\| \le 100\gamma\|x\|$, for $x\in Q$. Again $\|\phi(p)-q\| < 100/200 + 1/2 = 1$, so $\phi(p)$ and $q$ are equivalent in $N$ which yields $\ct_N(\phi(p))=\ct_N(q)$. The projections $\{\phi(e_1),\ldots, \phi(e_j), \phi(h), \phi(f_1),\ldots, \phi(f_{n-j-1})\}$ are equivalent in $N$ and sum to $I$. Thus each has centre valued trace $I/n$. It follows that
\begin{equation}
\ct_N(q)=\ct_N(\phi(p))\geq \sum^j_{i=1}\ct_N(\phi(e_i))\geq j/nI>\ct_M(p)-1/nI\geq (d-1/n)I.
\end{equation}
This implies $d-c\leq 1/n$, contradicting the choice of $n$, and proving the result.
\end{proof}

The next result in this section  examines von Neumann algebras close to those in standard position. Recall from \cite[I {\S}6.1]{Dixmier.AlgOpsHilbertFrench} or \cite[p. 691]{KR.2} that the coupling function $\Gamma(M,M')$ for a finite von~Neumann algebra $M$ with finite commutant $M'$ is a possibly unbounded positive operator affiliated to the centre $Z$, having the following property. For each vector $\xi$ in the underlying Hilbert space $\mathcal H$
\begin{equation}\label{eq4.1}
 \mathbb T_M(e^{M'}_\xi) = \Gamma(M,M') \mathbb T_{M'}(e^{M}_\xi)
\end{equation}
where $e^{M'}_\xi\in {M}$ is the projection onto the cyclic subspace $\overline{M'\xi}$, while $e^{M}_\xi \in M'$ projects onto $\overline{M\xi}$. Recall too that a finite von Neumann algebra $M$ is in standard position on a Hilbert space $\mathcal H$ if and only if $M'$ is finite and $\Gamma(M,M')=I$.  From this point of view, the next lemma shows that a von~Neumann algebra which is close to an algebra in standard position is approximately in standard position.

\begin{lemma}\label{CouplingConstant}
Let $M$ and $N$ be finite von~Neumann algebras on a Hilbert space $\mathcal H$ with common centre $Z$. Let $ \gamma,\,\delta < 1/200$ be constants such that the near inclusions
\begin{equation}
M\subseteq_\gamma N,\quad N\subseteq_\gamma M,\quad M'\subseteq_\delta N',\quad and\quad N'\subseteq_\delta M' 
\end{equation}
hold.
 If $M$ is in standard position on $\mathcal H$, then $N'$ is finite and $\Gamma(N,N')$ satisfies
\begin{equation}\label{eq4.2}
 0.99~I <  (1-\gamma/\sqrt{2})I \le \Gamma(N,N') \le \frac{1}{1-\delta/\sqrt{2}} I < 1.01~I.
\end{equation}
\end{lemma}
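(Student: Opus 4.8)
The plan is to read off $\Gamma(N,N')$ from its defining relation \eqref{eq4.1}, evaluated at a single well-chosen vector, using the standard position of $M$ to fix the corresponding data for $M$ and then transporting everything to $N$ and $N'$ through the near inclusions.

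First I would dispose of the finiteness of $N'$. Since $M$ and $N$ share the centre $Z$, so do their commutants: $Z(M')=M'\cap M=Z$ and likewise $Z(N')=Z$. From $M'\subseteq_\delta N'$ and $N'\subseteq_\delta M'$, Proposition \ref{Prelim.MetricNear} gives $d(M',N')\le 2\delta<1/10$, so Lemma \ref{Decomposition} applied to the pair $M',N'$ transports the type decomposition; as $M'$ is finite its infinite central summand vanishes, hence so does that of $N'$, and $N'$ is finite. Thus $\Gamma(N,N')$ is defined.

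Next, using that $M$ is in standard position (and countably decomposable) I would fix a cyclic and separating trace vector $\xi$ for $M$, normalised so that $\|\xi\|=1$; then $e^M_\xi=e^{M'}_\xi=I$ and the vector state $\omega_\xi$ restricts to a trace on $M$ and on $M'$, while \eqref{eq4.1} for $M$ merely records $\Gamma(M,M')=I$. The heart of the matter is to show that the two cyclic projections attached to $\xi$ for $N$ are nearly full. Writing $p=e^{N'}_\xi\in N$ and $q=e^N_\xi\in N'$ for the supports of $\xi$ in $N$ and $N'$, I claim
\[
(1-\gamma/\sqrt2)\,I\le \ct_N(p)\le I,\qquad (1-\delta/\sqrt2)\,I\le \ct_{N'}(q)\le I .
\]
For the first, note that $I-p\in N$ annihilates $\xi$; since $N$ and $M$ share the unit $I_{\mathcal H}$ and $N\subseteq_\gamma M$, Proposition \ref{Prelim.Estimates}(ii)(b) produces a projection $r\in M$ with $\|(I-p)-r\|<\gamma/\sqrt2<1/2$, so $r$ nearly annihilates $\xi$. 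Because $\xi$ is a separating trace vector for $M$, this forces $\ct_M(r)$ to be small, and Lemma \ref{CentralTrace}—applicable as the projections lie within $1/2$ and $\gamma<1/200$—delivers the exact identity $\ct_N(I-p)=\ct_M(r)$, whence the lower bound on $\ct_N(p)$ (in fact a quadratic bound $\ct_M(r)\le\gamma^2/2$, more than enough). The estimate for $q$ is identical with $M',N',\delta$ replacing $M,N,\gamma$.

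Finally I would substitute into \eqref{eq4.1} for $N$, namely $\ct_N(p)=\Gamma(N,N')\,\ct_{N'}(q)$: combining the lower bound on $\ct_N(p)$ with $\ct_{N'}(q)\le I$ gives $\Gamma(N,N')\ge(1-\gamma/\sqrt2)I$, while $\ct_N(p)\le I$ together with the lower bound on $\ct_{N'}(q)$ gives $\Gamma(N,N')\le(1-\delta/\sqrt2)^{-1}I$; the numerical bounds $0.99$ and $1.01$ then follow from $\gamma,\delta<1/200$. The step I expect to be the main obstacle is the passage from ``$r$ nearly annihilates $\xi$'' to a bound on the \emph{centre-valued} trace $\ct_M(r)$: a single trace vector controls only one tracial state $\omega_\xi=\phi\circ\ct_M$, so to obtain the stated operator inequality for the central element $\Gamma(N,N')$ one must run the estimate uniformly over all central reductions (equivalently, localise by central projections, where on each factorial fibre $M_z$ is in standard position and the scalar coupling constant is pinned by the same computation). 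Ensuring that every distance stays below $1/2$ throughout, so that Lemma \ref{CentralTrace} keeps producing exact equalities of central traces, is what makes this bookkeeping delicate.
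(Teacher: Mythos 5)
Your proposal is essentially correct and uses the same core ingredients as the paper (finiteness of $N'$ via Lemma \ref{Decomposition}, a trace vector supplied by standard position, Proposition \ref{Prelim.Estimates}(ii)(b), Lemma \ref{CentralTrace}, and the defining relation \eqref{eq4.1}), but it is organised differently, and the difference is worth noting. You localise over \emph{all} central projections in order to upgrade the scalar estimate at $\xi$ to the operator inequalities $\ct_N(p)\ge(1-\gamma/\sqrt2)I$ and $\ct_{N'}(q)\ge(1-\delta/\sqrt2)I$, and then read off $\Gamma(N,N')$ from a single application of \eqref{eq4.1}. The paper instead localises over the \emph{spectral projections of $\Gamma(N,N')$ itself}: it fixes $t$ in the spectrum, cuts by the spectral projection for $(t-\varepsilon,t+\varepsilon)$, chooses a fresh unit trace vector on the cut-down space, and then only ever needs scalar inequalities ($1-\gamma/\sqrt2\le\langle r\xi,\xi\rangle\le 1$, etc.), entirely avoiding the operator inequalities you aim for. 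The step you flag as delicate does go through, and for the reason you should make explicit: the estimate $\|(I-p)-r\|\le\gamma/\sqrt2$ is an operator-norm bound, so for every central projection $z$ one has $\|rz\xi\|=\|(r-(I-p))z\xi\|\le(\gamma/\sqrt2)\|z\xi\|$ because $(I-p)z\xi=z(I-p)\xi=0$; hence $\tau(rz)=\|rz\xi\|^2\le(\gamma^2/2)\tau(z)$ for all central $z$, which forces $\ct_M(r)\le(\gamma^2/2)I$ (and similarly for $s$ and $\delta$). What would \emph{not} work is trying to control the normalised localised traces directly from the single number $\|r\xi\|\le\gamma/\sqrt2$, since $\tau(z)$ then appears in a denominator; your argument must be run exactly as above. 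Two smaller points: the existence of a single cyclic and separating trace vector requires the paper's preliminary maximality argument decomposing $Z$ into pieces $z_j$ on which $Mz_j$ has a faithful normal trace (your parenthetical ``countably decomposable'' gestures at this but you should carry out the reduction), and in the final step the inequalities $\Gamma\ge\Gamma\,\ct_{N'}(q)$ and $\Gamma(1-\delta/\sqrt2)\le\Gamma\,\ct_{N'}(q)$ need the remark that $\Gamma$ is a positive operator affiliated to $Z$ commuting with the bounded central element $\ct_{N'}(q)$, so the multiplications are handled fibrewise. With those details supplied your route is complete and in fact yields the slightly sharper bounds $1-\gamma^2/2$ and $(1-\delta^2/2)^{-1}$.
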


\begin{proof}
Since $d(M',N')\le 2\delta<1/100<1/10$ and $M'$ is finite, Lemma \ref{Decomposition} shows that $N'$ is also finite. We are  not requiring that $\mathcal H$ be separable, so $M$ need not have a faithful trace.  However, as $M$ certainly has a separating family of normal tracial states, a maximality argument gives a set $\{z_j\}_{j\in J}$ of orthogonal central projections summing to $I$ so that each $Mz_j$ has a faithful normal trace.  By proving the result for each of the pairs $(Mz_j,Nz_j)$ separately, we may then assume that $M$ has a faithful normal trace $\tau$. Let $t>0$ be fixed but arbitrary in the spectrum of $\Gamma(N,N')$. It suffices to demonstrate the inequalities
\begin{equation}\label{eq4.3}
1-\gamma/\sqrt{2} \le t\le \frac{1}{1-\delta/\sqrt{2}}.
\end{equation}

Given $\varepsilon>0$, let $e\in Z$ be the non-zero spectral projection of $\Gamma(N,N')$ for $(t-\varepsilon,t+\varepsilon)$. We may cut by this projection, which allows us to assume that
\begin{equation}\label{eq4.4}
 (t-\varepsilon) I \le \Gamma(N,N') \le (t+\varepsilon)I.
\end{equation}
Since $M$ is in standard position on $\mathcal H$, there is a unit vector $\xi\in\mathcal H$ so that the vector state $\langle\,\cdot\,\xi,\xi\rangle$ defines a faithful tracial state  both on $M$ and on $M'$. Define two cyclic projections $p\in N$ and $q\in N'$ with range spaces $\overline{N'\xi}$ and $\overline{N\xi}$ respectively. By   Proposition \ref{Prelim.Estimates} (ii b), we may choose projections $r\in M$ and $s\in M'$ so that $\|p-r\| \le \gamma/\sqrt 2$ and $\|q-s\| \le \delta/\sqrt 2$. The hypotheses of Lemma \ref{CentralTrace} are satisfied and so $\ct_{N}(p)=\ct_M(r)$ and $\ct_{N'}(q)=\ct_{M'}(s)$.
The centre valued traces $\ct_M$ and $\ct_{M'}$ preserve the trace $\langle\,\cdot\,\xi,\xi\rangle$ on $M$ and $M'$ and so
\begin{equation}\label{eq4.5}
 \langle \ct_{N'}(q)\xi,\xi\rangle = \langle\ct_{M'}(s)\xi,\xi\rangle = \langle s\xi,\xi\rangle
\end{equation}
and
\begin{equation}\label{eq4.6}
 \langle\ct_N(p)\xi,\xi\rangle = \langle\ct_M(r)\xi,\xi\rangle = \langle r\xi,\xi\rangle.
\end{equation}
Define $\alpha$ to be $\langle \ct_{N'}(q)\xi,\xi\rangle > 0$, and $\beta$ to be such that
\begin{equation}\label{eq4.7}
 \alpha\beta = \langle\ct_N(p)\xi,\xi\rangle = \langle \Gamma(N,N')\ct_{N'}(q)\xi,\xi\rangle.
\end{equation}
The relations \eqref{eq4.4} and \eqref{eq4.7} imply that
\begin{equation}\label{eq4.8}
 (t-\varepsilon)\alpha \le \alpha\beta \le (t+\varepsilon)\alpha
\end{equation}
and so
\begin{equation}\label{eq4.9}
 \beta-\varepsilon \le t\le \beta+\varepsilon.
\end{equation}
Since $p\xi = q\xi = \xi$, the choices of $r$ and $s$ imply that
\begin{equation}\label{eq4.10}
 1-\delta/\sqrt 2 \le \langle s\xi,\xi\rangle \le 1
\end{equation}
and
\begin{equation}\label{eq4.11}
 1-\gamma/\sqrt 2 \le \langle r\xi,\xi\rangle \le 1.
\end{equation}
The definitions of $\alpha$ and $\alpha\beta$, together with (\ref{eq4.5}) and (\ref{eq4.6}), allow us to rewrite these inequalities as
\begin{equation}\label{eq4.12}
 1-\delta/\sqrt 2 \le \alpha \le 1
\end{equation}
and
\begin{equation}\label{eq4.13}
 1-\gamma/\sqrt 2 \le \alpha\beta \le 1,
\end{equation}
after which division yields
\begin{equation}\label{eq4.14}
 1-\gamma/\sqrt{ 2} \le \beta \le \frac{1}{1-\delta/\sqrt{2}}.
\end{equation}
From \eqref{eq4.9}, we now have the inequalities
\begin{equation}\label{eq4.15}
 1-\gamma/\sqrt{ 2} - \varepsilon \le t\le \frac{1}{1-\delta/\sqrt{2}} +\varepsilon.
\end{equation}
Now let $\varepsilon\to 0$, and we have proved \eqref{eq4.3} as required.
\end{proof}

We end the section with a final technical result which we need in the proof of Lemma \ref{Length.II1Case}.
\begin{lemma}\label{Matrix.Units}
Let $N\subseteq \mathbb{B}(\mathcal{H})$ be a von Neumann
algebra and let $d\in N'$ be a projection with central
support $I_{\mathcal H}$. Then there exists an infinite
dimensional Hilbert space $\mathcal G$ and a minimal
projection $g_0\in \mathbb{B}(\mathcal{G})$ such that
$d\otimes g_0$ extends to a system of matrix units in
$N'\ \overline{\otimes}\ \mathbb{B}(\mathcal{G})$.
\end{lemma}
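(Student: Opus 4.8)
The plan is to reduce the statement to standard facts from the comparison theory of projections, exploiting that we are free to choose $\mathcal G$. Write $M=N'$, take $\mathcal G=\ell^2(\mathbb N)$ with orthonormal basis $(\zeta_n)_{n\ge 0}$ and associated matrix units $g_{ij}$, and set $g_0=g_{00}$, a minimal projection of $\mathbb B(\mathcal G)$. Put $P=M\ \overline{\otimes}\ \mathbb B(\mathcal G)$ and $e=d\otimes g_0$. The first observation is that $P$ is properly infinite: its identity decomposes as $I_P=\sum_n I_M\otimes g_{nn}$, an orthogonal sum of infinitely many mutually equivalent non-zero projections, the equivalences being implemented by the partial isometries $I_M\otimes g_{n0}$.

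Next I would locate $e$ within the comparison theory of $P$. Since $Z(P)=Z(M)\ \overline{\otimes}\ \mathbb C I_{\mathcal G}$, the central support of $e=d\otimes g_0$ in $P$ is $c_M(d)\otimes I_{\mathcal G}$, where $c_M(d)$ denotes the central support of $d$ in $N'$. By hypothesis $c_M(d)=I_{\mathcal H}$, so the central support of $e$ equals $I_M\otimes I_{\mathcal G}=I_P$. Because $P$ is properly infinite, a projection with full central support is equivalent to the identity, and hence $e\sim I_P$.

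I would then treat the complement $f:=I_P-e$. Here the infinite-dimensionality of $\mathcal G$ does the work: $I_M\otimes(I_{\mathcal G}-g_0)\le f$, and $I_M\otimes(I_{\mathcal G}-g_0)\sim I_M\otimes I_{\mathcal G}=I_P$ since $I_{\mathcal G}-g_0\sim I_{\mathcal G}$ in the type $\mathrm I_\infty$ factor $\mathbb B(\mathcal G)$; as also $f\le I_P$, the Cantor--Bernstein theorem for von Neumann algebras gives $f\sim I_P$. Thus $f$ is a properly infinite projection with $f\sim e$. Being properly infinite, $f$ decomposes as a countable orthogonal sum $f=\sum_{n\ge 1}f_n$ of projections each equivalent to $f$, hence to $e$ (this standard decomposition follows from repeated halving together with a Cantor--Bernstein absorption of the residual projection into the first summand). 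Setting $e_0=e$ and $e_n=f_n$ for $n\ge 1$ gives mutually orthogonal projections, each equivalent to $e$, with $\sum_{n\ge 0}e_n=e+f=I_P$. Choosing partial isometries $w_n$ with $w_0=e$, $w_n^*w_n=e$ and $w_nw_n^*=e_n$, the elements $e_{ij}=w_iw_j^*$ form a system of matrix units with $e_{00}=e=d\otimes g_0$ and $\sum_i e_{ii}=I_P$, as required.

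I expect no genuine difficulty: all three ingredients---the central-support computation, the criterion that a full-central-support projection is equivalent to the identity in a properly infinite algebra, and the decomposition of a properly infinite projection into copies of itself---are standard. The one point needing care is that $d\otimes g_0$ must be an \emph{actual} diagonal entry rather than merely equivalent to one, which is why I split $e$ off first and decompose only the complement $f$ (after verifying $f\sim I_P\sim e$); a two-by-two system built directly from $e\sim f$ would already suffice if an infinite system were not wanted.
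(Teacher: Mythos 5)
There is a genuine gap, and it occurs at the pivotal step. You assert that, because $P=N'\ \overline{\otimes}\ \mathbb B(\mathcal G)$ is properly infinite, a projection with full central support is equivalent to the identity, and you apply this to $e=d\otimes g_0$. That criterion is false: the standard result (e.g.\ Kadison--Ringrose) requires the projection itself to be \emph{properly infinite} (and $P$ to be countably decomposable), not merely to have full central support. A rank-one projection in $\mathbb B(\ell^2)$ has full central support but is not equivalent to $I$. In the very situation where this lemma is invoked (Lemma \ref{Length.II1Case}), $d$ is a \emph{finite} projection in $N'$, so $e=d\otimes g_0$ is finite and certainly not equivalent to $I_P$. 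Everything downstream ($f\sim I_P\sim e$, hence each $f_n\sim e$) collapses with this claim.

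The deeper issue is that you fixed $\mathcal G=\ell^2(\mathbb N)$ at the outset, whereas the lemma deliberately leaves $\mathcal G$ free and the paper chooses it to depend on $N$ and $d$. With a countable index set the conclusion is simply false in general: take $N=\mathbb C I$, so $N'=\mathbb B(\mathcal H)$, let $\mathcal H$ be non-separable and $d$ rank one; then $d\otimes g_0$ is a rank-one projection in $\mathbb B(\mathcal H\otimes\ell^2)$, and no countable family of diagonal matrix units equivalent to it can sum to $I$. (The paper explicitly works without separability assumptions on $\mathcal H$.) The paper's proof supplies exactly the ingredient you are missing: by Dixmier's theorem that an isomorphism of von Neumann algebras becomes spatial after a sufficiently large amplification, one obtains a unitary $v:\mathcal H\otimes\mathcal G\to d(\mathcal H)\otimes\mathcal G$ intertwining $N\otimes I_{\mathcal G}$ and $Nd\otimes I_{\mathcal G}$, which lies in $N'\ \overline{\otimes}\ \mathbb B(\mathcal G)$ and hence witnesses $d\otimes I_{\mathcal G}\sim I_{\mathcal H}\otimes I_{\mathcal G}$ there --- the equivalence you need, valid only because $\mathcal G$ was chosen large enough. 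Once that is in hand, splitting $\mathcal G=\mathcal E\oplus\mathcal F$ and decomposing $I_{\mathcal E}$ into minimal projections produces the matrix units; your final bookkeeping (building $e_{ij}=w_iw_j^*$ from partial isometries) is the unproblematic part. In the $\sigma$-finite case your outline can be repaired by comparing traces, but as written the argument does not prove the stated lemma.
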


\begin{proof}
Since $d$ has full central support, the map $n\mapsto nd$
defines an isomorphism between $N$ and $Nd\subseteq
\mathbb{B}(d(\mathcal{H}))$. The general theory of
isomorphisms, \cite[I {\S}4 Theorem 3]{Dixmier.AlgOpsHilbertFrench}, gives an infinite dimensional
Hilbert space $\mathcal G$ so that the amplifications
$N\otimes I_{\mathcal G}$ and $Nd\otimes I_{\mathcal G}$ are
spatially isomorphic by a unitary $v:{\mathcal H}\otimes
{\mathcal G} \to d({\mathcal H})\otimes {\mathcal G}$. We
regard this operator as a partial isometry on $\mathcal H\otimes\mathcal G$ with initial projection $I_{\mathcal H}\otimes I_{\mathcal G}$ and final projection $d\otimes I_{\mathcal G}$. Multiplying the following equation 
\begin{equation}\label{neweq}
v(n\otimes I_{\mathcal G})v^*=(nd)\otimes I_{\mathcal
G},\qquad n\in N,
\end{equation}
on the left by $v^*$ gives
\begin{equation}
(n\otimes I_{\mathcal G})v^*=
v^*(d\otimes I_{\mathcal G})(n\otimes I_{\mathcal G})=
v^*(n\otimes I_{\mathcal G}),\qquad n\in N,
\end{equation}
from which we conclude that $v\in
N'\ \overline{\otimes}\ \mathbb{B}(\mathcal{G})$.

Now split $\mathcal G$ as $\mathcal{E}\oplus \mathcal{F}$,
where these summands have the same infinite dimension as $\mathcal
G$, and define $q\in N'\ \overline{\otimes}\
\mathbb{B}(\mathcal{G})$ to be $d\otimes I_{\mathcal E}$
which is equivalent to $d\otimes I_{\mathcal F}$ and $d\otimes I_{\mathcal G}$. Then
define $p=(I_{\mathcal H}\otimes I_{\mathcal G})-q \geq
d\otimes I_{\mathcal F}$. The latter projection is
equivalent to $d\otimes I_{\mathcal G}$ and so $I_{\mathcal H}\otimes I_{\mathcal G}\sim d\otimes I_{\mathcal F}\leq p\precsim I_{\mathcal H}\otimes I_{\mathcal G}$. Thus $p$, $d\otimes I_{\mathcal F}$, $I_{\mathcal H}\otimes I_{\mathcal G}$ and $q=d\otimes I_{\mathcal E}$ are all equivalent in $N'\ \overline{\otimes}\ \mathbb B(\mathcal G)$. Choose a partial isometry
$w\in N'\ \overline{\otimes}\ \mathbb{B}(\mathcal{G})$ so that
$w^*w=p=(I_{\mathcal H}\otimes I_{\mathcal G})-q$ and
$ww^*=d\otimes I_{\mathcal E}=q$.
Now choose a family of orthogonal equivalent
projections $\{e_j: j\in J\}\subseteq
\mathbb{B}(\mathcal{E})$ with sum $I_{\mathcal E}$. Then the
equivalent projections $\{d\otimes e_j: j\in J\}$ sum to
$q$, and these in turn are equivalent to $\{w^*(d\otimes
e_j)w: j\in J\}$ with sum $(I_{\mathcal H}\otimes
I_{\mathcal G})-q$. The proof is completed by choosing $g_0$
to be any one of the projections $e_j$.
\end{proof}

\section{Stability of finite length}\label{Length}
The main result of this section is Theorem \ref{Length.DH}  which shows that a C$^*$-algebra $B$ which is close to a C$^*$-algebra $A$ of finite length must also have finite length and obtains a bound on the length of $B$ in terms of the length and the length constant of $A$.  When $A$ has finite length, Proposition \ref{LDK.Comm} gives a near inclusion of $B'$ inside $A'$ (with constants depending on $d(A,B)$,  the length of $A$, and its associated length constant). The key step in Theorem \ref{Length.DH} is to obtain a reverse near inclusion of $A'$ inside $B'$ which we achieve in Theorem \ref{Length.Comm}.  This in turn is established by a type decomposition argument, handling the finite type $\mathrm{I}$, the type $\mathrm{II}_1$, and the infinite cases separately. Existing results enable us to deal with the first and last cases quickly so the heart of the matter is the $\mathrm{II}_1$ case.

\begin{lemma}\label{Length.II1Case}
Let $M$ and $N$ be von Neumann algebras of type $\mathrm{II}_1$ faithfully and non-degenerately represented on $\mathcal H$. Suppose further that $M$ and $N$ have common centre $Z$ which admits a faithful state. Suppose that $d(M,N)=\alpha$ and $M$ contains an ultraweakly dense C$^*$-algebra $A$ of length at most $\ell$ and length constant at most $K$. Write $k=K\ell/2$.  If $\alpha$ satisfies the inequality
\begin{equation}\label{Length.II1Case.Est}
24(12\sqrt{2}k+4k+1)\alpha<1/200,
\end{equation}
then
\begin{equation}
d(M',N')\leq2\beta+1200k\alpha(1+\beta),
\end{equation}
where $\beta=K((1+28800k\alpha+48\alpha)^\ell-1)$.
\end{lemma}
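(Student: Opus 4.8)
The plan is to establish the two near inclusions $N'\subseteq_\eta M'$ and $M'\subseteq_\eta N'$ with $\eta=\beta+600k\alpha(1+\beta)$, and then combine them via Proposition \ref{Prelim.MetricNear}(iii), which gives $d(M',N')\le 2\eta=2\beta+1200k\alpha(1+\beta)$, the claimed bound. A structural point worth flagging at the outset is that the two inclusions are not independent: the cheap inclusion $N'\subseteq M'$ will itself be used to set up the expensive one $M'\subseteq N'$.

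The inclusion $N'\subseteq M'$ is immediate. By Proposition \ref{Prelim.LengthDK} the algebra $A$ has property $D_k$ with $k=K\ell/2$, so the identity representation of $A$ on $\mathcal H$ has the local distance property $LD_k$; since $A$ is ultraweakly dense in $M$ we have $A'=M'$ and $\|\ad(x)|_A\|=\|\ad(x)|_M\|$, so $M$ itself satisfies $LD_k$ on $\mathcal H$. As $A\subseteq M$ and $M\subseteq_\alpha N$, Proposition \ref{LDK.Comm} yields $N'\subseteq_{2k\alpha}M'$, comfortably inside $\eta$.

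The reverse inclusion $M'\subseteq N'$ is the crux, and is where the type $\mathrm{II}_1$ hypothesis does real work. One cannot simply apply Proposition \ref{LDK.Comm} with $N$ in the role of the algebra carrying the distance property: for $x\in M'$ the near inclusion $N\subseteq_\alpha M$ bounds $\|\ad(x)|_N\|\le 2\alpha$, but converting this ordinary norm bound into the completely bounded bound on $d(x,N')$ demanded by the Arveson formula is exactly the finite length information that $N$ is not assumed to possess. The device to circumvent this is to pass to the standard position of $N$: amplifying by Lemma \ref{Matrix.Units} and then cutting by a projection in $N'$ chosen, via the already established $N'\subseteq_{2k\alpha}M'$ and Proposition \ref{Prelim.Estimates}(ii), close to a matching projection in $M'$, we may arrange that $M$ and $N$ share a cyclic and separating trace vector, with the distances and the coupling constant of the cut-down algebras controlled through Lemmas \ref{CentralTrace} and \ref{CouplingConstant} and the type-decomposition Lemma \ref{Decomposition}. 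In this standard picture $N$ has a finite (indeed two element, from the $\mathcal E\oplus\mathcal F$ splitting produced in Lemma \ref{Matrix.Units}) cyclic set, so by Proposition \ref{LDK.Cyclic} its representation has $LD_{24}$. Crucially, this property is available to $N$ for free, as it holds for any C$^*$-algebra with a finite cyclic set, and it is precisely what replaces the missing finite length of $N$.

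With $N$ now carrying $LD_{24}$ in the cut-and-amplified picture, I would transport the near inclusion $N\subseteq M$ there, matrix amplify it by Proposition \ref{Length.Matrix} (the origin of $\beta$, fed by the base constant $\gamma'=48\alpha+28800k\alpha$ that records the combined cost of the perturbation, the commutant estimate $2k\alpha$, and the $\sqrt{2}$ and $2\times 2$ matching factors of Propositions \ref{Prelim.Estimates}(ii) and \ref{2Times2}), and apply Proposition \ref{LDK.Comm} from the side of $N$ to obtain $M'\subseteq_{\beta+600k\alpha(1+\beta)}N'$ after transporting back. Combining this with the easy inclusion gives the stated estimate. The main obstacle is the standard position reduction itself: keeping every intermediate projection, unitary and cut-down within the thresholds ($1/200$, $1/4$, $1/(3\sqrt{2})$, $1/2$) required by Lemmas \ref{CentralTrace}, \ref{CouplingConstant} and Propositions \ref{2Times2}, \ref{Prelim.Estimates}, while tracking how each step scales $\alpha$ --- this uniform bookkeeping is exactly what the hypothesis \eqref{Length.II1Case.Est} is calibrated to guarantee. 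The conceptual heart, by contrast, is the single observation that cyclicity of $N$ in standard position supplies an $LD$ constant that finite length would otherwise have had to provide.
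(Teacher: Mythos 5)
Your overall architecture is right, and matches the paper's: the easy near inclusion $N'\subseteq_{2k\alpha}M'$ from property $D_k$ of $A$ and Proposition \ref{LDK.Comm}; the reduction to a picture in which a corner $T_{N}$ of $N$ is (close to) standard position and hence has a $2$-cyclic set and property $LD_{24}$ via Proposition \ref{LDK.Cyclic}; the role of Lemmas \ref{CentralTrace} and \ref{CouplingConstant} in locating that corner; and the final combination through Proposition \ref{Prelim.MetricNear}(iii). (One small correction: the $2$-cyclic set comes from the $2\times 2$ algebra $Q$ produced by Proposition \ref{2Times2}, not from the $\mathcal E\oplus\mathcal F$ splitting inside Lemma \ref{Matrix.Units}, which serves only to build the matrix units for the amplification.)

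There is, however, a genuine gap in your final assembly. The $LD_{24}$ property is available only on the corner $\mathcal H_0=e_{i_0,i_0}(\mathcal H\otimes\mathcal G)$, where it yields $T_M'\subseteq_{48(600k\alpha+\alpha)}T_{N}'$ \emph{inside $\mathbb B(\mathcal H_0)$}. You must then promote this corner estimate to $M'=T_M'\ \overline{\otimes}\ \mathbb B(\ell^2(\Lambda))\subseteq_\beta T_N'\ \overline{\otimes}\ \mathbb B(\ell^2(\Lambda))=N'$, and this amplification of a near inclusion is exactly the step that requires the \emph{included} algebra to have finite length (Proposition \ref{Length.Matrix} / Corollary \ref{Length.TensorNuclear}). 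Your proposal instead says you would matrix-amplify ``the near inclusion $N\subseteq M$'' and then apply Proposition \ref{LDK.Comm}: that inclusion cannot be amplified (its included algebra $N$ is precisely the one with no finite-length hypothesis), it is in any case the wrong inclusion, and applying \ref{LDK.Comm} on the full space would need $N$ itself, not its corner, to satisfy $LD_{24}$, which an infinite amplification of a cyclic representation does not give. The missing idea is the second use of standard position: since $T_M$ is in standard position on $\mathcal H_0$, the modular conjugation gives $JT_MJ=T_M'$, so $T_M'$ contains the weak$^*$-dense C$^*$-algebra $JT_AJ$ of length at most $\ell$ and constant at most $K$. It is this transferred finite length that lets one apply Corollary \ref{Length.TensorNuclear} to $JT_AJ\subset_{48(600k\alpha+\alpha)}T_N'$, tensor with $\mathbb K(\ell^2(\Lambda))$, and take weak closures (Lemma 5 of Kadison--Kastler) to obtain $M'\subset_\beta N'$ with $\beta=K((1+28800k\alpha+48\alpha)^\ell-1)$. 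Standard position is thus doing double duty --- supplying a cyclic set to $N$'s corner \emph{and} finite length to $M$'s corner-commutant --- and your write-up captures only the first half; your constants are consistent with the correct order of operations (distance property first, then amplification of the commutant inclusion), but the argument as described does not produce them.
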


The proof of this result is long and intricate, so it will
be helpful to give a brief summary before embarking on it.
Our objective is to reduce to the following situation: 
\begin{itemize}
\item[{\rm (i)}] $\mathcal H$
decomposes as $\mathcal H_0\otimes \ell^2(\Lambda)$; 
\item[{\rm (ii)}]
the von Neumann
algebras $M$ and $N$ simultaneously decompose as $M\cong M_0\otimes
I_{\ell^2(\Lambda)}$ and $N\cong N_0\otimes I_{\ell^2(\Lambda)}$;
\item[{\rm (iii)}] 
$M_0$ is in standard position on $\mathcal H_0$; 
\item[{\rm (iv)}] 
$N_0$ has the
local
distance property $LD_{24}$ on $\mathcal H_0$.
\end{itemize}
Once (i)-(iv) have been achieved, the proof is completed by the following steps.
  The local distance
property immediately gives a near inclusion
\begin{equation}\label{Para}
M_0'\subseteq_{\alpha'}N_0',\quad\text{on }\mathcal H_0
\end{equation}
for a suitable constant $\alpha'$.
Since $M_0$ is in standard position on $\mathcal H_0$, it is
anti-isomorphic
to its commutant. In particular, this commutant has a weakly dense
C$^*$-algebra of finite length so we can use results from Section \ref{Sim}
to lift the near inclusion (\ref{Para}) to obtain a near inclusion
of
the form
\begin{equation}
M'=M_0'\ \overline{\otimes}\ \mathbb
B(\ell^2(\Lambda))\subseteq_{\alpha''}N_0'\ \overline{\otimes}\ \mathbb
B(\ell^2(\Lambda))=N'
\end{equation}
for a suitable constant $\alpha''$. Since a reverse near inclusion is immediate from the hypotheses of
the
lemma, this establishes the result.

To reach the situation detailed in (i)-(iv) above, a number of further
reductions
are necessary.  We first adjust the Hilbert space and arrange for
the
representation of $N$ to be an amplification of its standard position.
We
then find non-zero close projections $e\in M'$ and $d\in N'$ of full
central support so that $Me$ is in standard position on 
$e(\mathcal H)$ and $Nd$ has the local distance property $LD_{24}$ on 
$d(\mathcal H)$.  This is the main technical step in the proof, requiring our
earlier results regarding the behaviour of the centre valued trace
and coupling function under small perturbations. This enables us to transfer the
property that some cut down of $N$ is in standard position to the
same
property for $M$.  We then use the perturbation theory for injective
von Neumann algebras from \cite{Christensen.NearInclusions} to
obtain
the situation of (i)-(iv) above.  The authors would like to thank the referee for bringing a small gap in the original version of this lemma to our attention.

\begin{proof}[Proof of Lemma \ref{Length.II1Case}]
Let $S$ be an isomorphic copy of $N$, acting in standard position on a Hilbert space $\mathcal K$. The general theory of isomorphisms of von~Neumann algebras \cite[I {\S}4 Theorem 3]{Dixmier.AlgOpsHilbertFrench} allows us to choose a sufficiently large set $\Omega$ (which we insist has at least 2 points) so that the amplifications $\widetilde{N}$ of $N$ to $\mathcal H\otimes \ell^2(\Omega)$ and $\widetilde{S}$ of $S$ to $\mathcal K\otimes \ell^2(\Omega)$ are spatially isomorphic. Amplification increases the distance between commutants, so if the result is true in this context then it is true generally. Thus we can assume that $\mathcal H$ decomposes as $\mathcal K\otimes \ell^2(\Omega)$ and that $N=S \otimes I_{\ell^2(\Omega)}$. Then $N'=S'\ \overline\otimes\ \mathbb B(\ell^2(\Omega))$. 

Proposition \ref{Prelim.LengthDK} shows that the C$^*$-algebra $A$ has property $D_k$, so
Proposition \ref{LDK.Comm} gives
\begin{equation}\label{II1.1}
N'\subseteq_{2k\alpha}M'.
\end{equation}
Choose a copy $Q_0$ of the $2\times 2$ matrices in $\mathbb B(\ell^2(\Omega))$ such that the minimal projections of $Q_0$ are rank one projections in $\mathbb B(\ell^2(\Omega))$ and let $Q=I_{\mathcal K}\otimes Q_0\subset\ N'$. The near inclusion (\ref{II1.1}) gives
\begin{equation}
Q\subseteq_{2k\alpha}M',
\end{equation}
and note that $Q$ and $M'$ both lie in the algebra $Z'$. The inequality (\ref{Length.II1Case.Est}) implies $2k\alpha<1/(3\sqrt{2})$, so Proposition \ref{2Times2} gives us a unitary $u_1\in Z'$ with 
\begin{equation}\label{II1.2}
\|u_1-I_{\mathcal K}\|\leq (3\sqrt{2}+1)2k\alpha,
\end{equation}
such that $u_1Qu_1^*\subset M'$.  

Define $N_1=u_1Nu_1^*$. Since $u_1\in Z'$ it follows that $N_1$ has centre $Z$.  Let $Q_1=u_1Qu_1^*$ so that $Q_1\subset M'\cap N_1'$. The estimate (\ref{II1.2}) gives the distance estimate
\begin{equation}\label{L.3}
d(M,N_1)\leq 2\|u_1-I_{\mathcal K}\|+d(M,N)\leq (12\sqrt{2}k+4k+1)\alpha.
\end{equation}
Similarly, the near inclusion (\ref{II1.1}) induces the near inclusion
\begin{equation}\label{L.4}
N_1'\subseteq_{6(2\sqrt{2}+1)k\alpha}M'.
\end{equation}
The construction of $Q$ ensures that every non-zero projection in $Q_1$ has central support $I$ in $N_1'$ and hence central support $I$ in $M'$.  Fix a minimal projection $f\in Q_1$. By choice of $Q_1$, the algebra $N_1f$ is in standard position on $f(\mathcal H)$. Then $N_1f$ has a cyclic vector and so has the local distance property $LD_{12}$ on this space by Proposition \ref{LDK.Cyclic}.  The distance estimate (\ref{L.3}) compresses to $f(\mathcal H)$ to give the near inclusion
\begin{equation}
N_1f\subseteq_{(12\sqrt{2}k+4k+1)\alpha}Mf.
\end{equation}
Applying Proposition \ref{LDK.Comm} then gives
\begin{equation}
(M')_f\subseteq_{24(12\sqrt{2}k+4k+1)\alpha}(N_1')_f.
\end{equation}
Since $f$ lies in $M'\cap N_1'$, we can also compress (\ref{L.4}) by $f$ to obtain
\begin{equation}
(N_1')_f\subseteq_{6(2\sqrt{2}+1)k\alpha} (M')_f.
\end{equation}

Now $N_1f$ is in standard position on $f(\mathcal H)$. The inequalities (\ref{Length.II1Case.Est})
and (\ref{L.3}) ensure that $d(Mf,N_1f)<1/200$.   Moreover,
\begin{equation}
6(\sqrt{2}+1)k\alpha<1/200\quad\text{and}\quad 24(12\sqrt{2}k+4k+1)\alpha<1/200,
\end{equation}
so  the hypotheses of Lemma \ref{CouplingConstant} are met for the algebras $Mf$ and $N_1f$. Writing $\Gamma(Mf,(M')_f)$ for the coupling function of $Mf$ on $f(\mathcal H)$, we obtain
\begin{equation}
0.99~f\leq \Gamma(Mf,(M')_f)\leq 1.01~f.
\end{equation}
Let $I_{Q_1}$ denote the unit of $Q_1$ and $\Gamma(MI_{Q_1},(M')_{I_{Q_1}})$ denote the coupling function of $MI_{Q_1}$ on $I_{Q_1}(\mathcal H)$. As $MI_{Q_1}$ is a two-fold amplification of $Mf$, it follows that
\begin{equation}
1.98~I_{Q_1}\leq \Gamma(MI_{Q_1},(M')_{I_{Q_1}})\leq 2.02~I_{Q_1}.
\end{equation}
In particular $\Gamma(MI_{Q_1},(M')_{I_{Q_1}})\geq I_{Q_1}$ and so any state on $MI_{Q_1}$ is a vector state (see \cite[III {\S}.1 Proposition 3]{Dixmier.AlgOpsHilbertFrench}, for example).

Let $\tau$ be a faithful tracial state on $M$, the existence of which is guaranteed by our hypothesis that $Z$ admits a faithful state.  As $I_{Q_1}$ has central support $I$ in $M'$, the representation $m\mapsto mI_{Q_1}$ of $M$ on $I_{Q_1}(\mathcal H)$ is faithful. Therefore the previous paragraph gives us a unit vector $\xi\in I_{Q_1}(\mathcal H)$ with
\begin{equation}
\tau(m)=\langle m\xi,\xi\rangle,\quad m\in M.
\end{equation}
Let $e_0\in M'$ be the projection onto $\overline{M\xi}$. Then $Me_0$ is in standard position on $e_0(\mathcal H)$ and $e_0\leq I_{Q_1}$. Since the range of $e_0$ contains a trace vector for 
the faithful trace $\tau$ on $M$, it follows that $e_0$ has central support $I$ for $M'$. Indeed, given a non-zero projection $z\in Z$, $\tau(z)=\langle z\xi,\xi\rangle=\langle ze_0\xi,\xi\rangle \neq 0$, so that $ze_0\neq 0$.

By construction $(N_1)_{I_{Q_1}}$ has a $2$-cyclic set and so property $LD_{24}$ by Proposition \ref{LDK.Cyclic}. Accordingly, putting the distance estimate (\ref{L.3}) into Proposition \ref{LDK.Comm} gives the near inclusion
\begin{equation}
(M')_{I_{Q_1}}\subset_{48(12\sqrt{2}k+4k+1)\alpha}(N_1')_{I_{Q_1}}.
\end{equation}
 Proposition \ref{Prelim.Estimates} (ii b) then allows us to find a projection $d_0\in (N_1')_{I_{Q_1}}$ with
\begin{equation}
\|e_0-d_0\|\leq 48(12\sqrt{2}k+4k+1)\alpha/\sqrt{2}.
\end{equation}
Since $N_1I_{Q_1}$ has a $2$-cyclic set, so too does $N_1d_0$.  In particular Proposition \ref{LDK.Cyclic} shows that the algebra $N_1d_0$ on $d_0(\mathcal H)$ retains property $LD_{24}$. Since $\|e_0-d_0\|<1$ (this follows from the inequality (\ref{Length.II1Case.Est})) and $M'$ and $N_1'$ have common centres, $d_0$ has central support $I$ in $N_1'$.

Define $d=u_1^*d_0u_1$.  This lies in $N'$ and has the same properties there that $d_0$ has in $N_1'$. Thus the algebra  $Nd$ on $d(\mathcal H)$ has the local distance property $LD_{24}$, $d$ has central support $I$ in $N'$ and $d$ is finite in $N'$. It is convenient to adjust $e_0$ as this improves the estimates obtained in the lemma.  Since $N'\subset_{2k\alpha}M'$, applying  Proposition \ref{Prelim.Estimates} (ii b) again gives us a projection $e\in M'$ with
\begin{equation}
\|d-e\|<2k\alpha/\sqrt{2}=\sqrt{2}k\alpha.
\end{equation}
It follows that 
\begin{align}
\|e-e_0\|\leq &\|e-d\|+\|d-d_0\|+\|e_0-d_0\|\\
\leq&2k\alpha/\sqrt{2}+2\|u_1-1_{\mathcal H}\|+48(12\sqrt{2}k+4k+1)\alpha/\sqrt{2}<1,
\end{align}
where we obtain the bound of $1$ from the inequality (\ref{Length.II1Case.Est}).
Then $e$ and $e_0$ are unitarily equivalent in $M'$ and in particular $e$ has central support $I$ and $Me$ is in standard position on $e(\mathcal H)$.  This completes the first stage of the proof. 

Lemma \ref{Matrix.Units} enables us to find a Hilbert space $\mathcal G$ and a minimal projection $g_0\in\mathbb B(\mathcal G)$ so that there is a family of matrix units $(d_{i,j})_{i,j\in\Lambda}$ in $N'\ \overline{\otimes}\ \mathbb B(\mathcal G)$ with $d_{i_0,i_0}=d\otimes g_0$. As at the beginning of the proof, amplification increases the distance between commutants.  Thus we can work on $\mathcal H\otimes\mathcal G$, replacing $M$ and $N$ by $M\otimes I_{\mathcal G}$ and $N\otimes I_{\mathcal G}$ respectively. Note that (\ref{II1.1}) still holds as (the amplified versions of) $A$ and $N$ have property $D_k$.   Let $P$ be the injective von Neumann subalgebra of $N'\subseteq Z'$ generated by the matrix units $(d_{i,j})_{i,j\in\Lambda}$.  The near inclusion (\ref{II1.1}) gives
\begin{equation}
P\subset_{2k\alpha}M',
\end{equation}
so by \cite[Theorem 4.3]{Christensen.NearInclusions}, there is a unitary $u_2\in (M'\cup N')''\subseteq Z'$ such that $\|u_2-I_{\mathcal H}\|\leq 300k\alpha$ and $u_2Pu_2^*\subset M'$.  Again the required hypothesis that $2k\alpha<1/100$ to use \cite[Theorem 4.3]{Christensen.NearInclusions} is immediate from our initial inequality (\ref{Length.II1Case.Est}).

Define $N_2=u_2Nu_2^*$. This algebra also has centre $Z$ as $u_2\in Z'$. Define matrix units by $e_{i,j}=u_2d_{i,j}u_2^*$ and note that these matrix units lie in $M'\cap N_2'$.  The projection $e_{i_0,i_0}$ has 
\begin{equation}
\|e_{i_0,i_0}-e\otimes g_0\|\leq\|e_{i_0,i_0}-d_{i_0,i_0}\|+\|e\otimes g_0-d\otimes g_0\|\leq 2\|u_2-I_{\mathcal H}\|+\|e-d\|<1,
\end{equation}
where again we collect our previous estimates and apply (\ref{Length.II1Case.Est}) to achieve this estimate. Therefore $e_{i_0,i_0}$ and $e\otimes g_0$ are unitarily equivalent in $M'$ and so $Me_{i_0,i_0}$ is in standard position on $e_{i_0,i_0}(\mathcal H\otimes\mathcal G)$.  Using these matrix units we see that $M'$ and $N_2'$ are simultaneously spatially isomorphic to $(M')_{e_{i_0,i_0}}\ \overline{\otimes}\ \mathbb B(\ell^2(\Lambda))$ and $(N_2')_{e_{i_0,i_0}}\ \overline{\otimes}\ \mathbb B(\ell^2(\Lambda))$. 
The algebras $M$ and $N_2$ are now in the position described by conditions (i)-(iv) in the discussion preceding the proof. 
To ease notation write $T_M$ for the von Neumann algebra $Me_{i_0,i_0}$ acting on $e_{i_0,i_0}(\mathcal H\otimes\mathcal G)=\mathcal H_0$ and $T_{N_2}$ for $N_2e_{i_0,i_0}$ acting on the same space.  We have the distance estimate
\begin{equation}\label{L.10}
d(T_M,T_{N_2})\leq d(M,N_2)\leq d(M,N)+2\|u_2-I_{\mathcal H}\|\leq 600k\alpha+\alpha.
\end{equation}
By construction $T_{N_2}=N_2e_{i_0,i_0}$ has property $LD_{24}$ on $\mathcal H_0$ so Proposition \ref{LDK.Comm} gives
\begin{equation}
T_M'\subset_{48(600k\alpha+\alpha)}T_{N_2}'.
\end{equation}

Since $T_M$ lies in standard position, there is a conjugate linear isometry $J$ on $\mathcal H_0=e_{i_0,i_0}(\mathcal H\otimes \mathcal G)$ with $JT_MJ=T_M'$.  Now $T_M$, as a cut down of $M$, has a weak$^*$-dense C$^*$-algebra with length at most $\ell$ with length constant at most $K$.  Write $T_A$ for this C$^*$-algebra and note that $JT_AJ$ is weak$^*$-dense in $T_M'$ and also has length at most $\ell$ and length constant at most $K$.  
Since
\begin{equation}
JT_AJ\subset_{48(600k\alpha+\alpha)} T_{N_2}',
\end{equation}
Corollary \ref{Length.TensorNuclear} gives
\begin{equation}
JT_AJ\otimes\mathbb K(\ell^2(\Lambda))\subset_{\beta}
T_{N_2}'\otimes\mathbb K(\ell^2(\Lambda)),
\end{equation}
where $\beta=K((1+28800k\alpha+48\alpha)^\ell-1)$. Lemma 5 of \cite{Kadison.Kastler} allows us to take the weak operator closure of this near inclusion (note that although the statement is only given for the two-sided notion of closeness, the proof works in the one-sided context we need).  This gives a near inclusion
\begin{equation}
M'\subset_\beta N_2'.
\end{equation}
Since $d(N_2',N')\leq 2\|u_2-I\|\leq 600k\alpha$,  Proposition \ref{Prelim.MetricNear} (i) gives 
\begin{equation}
M'\subset_{\beta+600k\alpha(1+\beta)}N'.
\end{equation}
Combining this with the initial near inclusion (\ref{II1.1}) and using  Proposition \ref{Prelim.MetricNear} (iii) gives the estimate
\begin{equation}
d(M',N')\leq 2\beta+1200k\alpha(1+\beta),
\end{equation}
which completes the proof.
\end{proof}

The next theorem combines the previous lemma with results from \cite{Christensen.Perturbations2} to show that sufficiently close algebras have close commutants if one algebra has finite length. We do not assume that $A$ and $B$ are represented non-degenerately and so we use the notation $\overline{A}^w$ rather than $A''$ to denote the von Neumann algebra generated by $A$.

\begin{theorem}\label{Length.Comm}
Let $A$ and $B$ be $C^*$-algebras acting on a Hilbert space $\mathcal H$. Let $\gamma$ denote $d(A,B)$. Suppose that $A$ has finite length at most $\ell$ with length constant at most $K$, and suppose that $\gamma$ satisfies
\begin{equation}\label{eq4.16}
24(12\sqrt{2}k+4k+1)\gamma<1/2200,
\end{equation}
where $k=K\ell/2$.  
Then
\begin{equation}\label{eq4.17}
d(A',B')\leq  10\gamma+2\beta+13200k\gamma(1+\beta),
\end{equation}
where $\beta=K((1+316800k\gamma)^\ell-1)$.
\end{theorem}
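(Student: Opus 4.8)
The plan is to prove Theorem~\ref{Length.Comm} by reducing the general situation of two close C$^*$-algebras $A$ and $B$ to the von Neumann algebraic setting, where Lemma~\ref{Length.II1Case} and the other type-decomposition results of Section~\ref{Tech} can be applied. The central observation is that distance between commutants is unchanged when we pass to weak closures, since $A'=(\ovl{A}^w)'$ and $B'=(\ovl{B}^w)'$. So first I would set $M=\ovl{A}^w$ and $N=\ovl{B}^w$ and record that $d(M,N)\leq\gamma$: by Lemma~5 of \cite{Kadison.Kastler}, closeness of the C$^*$-algebras passes to their weak closures, so it suffices to bound $d(M',N')$.

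The next step is to arrange a common centre. Since $d(M,N)\leq\gamma$ and $\gamma$ is very small (certainly $\gamma<1/6$ by~\eqref{eq4.16}), Lemma~\ref{Centre} provides a unitary $u$ with $\|u-I_{\mathcal H}\|\leq5\gamma$ such that $uMu^*$ and $N$ have common centre $Z$, with $d(uMu^*,N)\leq11\gamma$. Conjugation by $u$ sends $M'$ to $uM'u^*$ and moves commutants by at most $2\|u-I_{\mathcal H}\|\leq10\gamma$; this is exactly where the leading $10\gamma$ term in~\eqref{eq4.17} will come from. Thus I would replace $M$ by $uMu^*$, noting that the weakly dense C$^*$-subalgebra $uAu^*$ still has length at most $\ell$ and length constant at most $K$, and reduce to the case where $M$ and $N$ share the centre $Z$ and $d(M,N)\leq 11\gamma=:\alpha$. (One must check that the hypothesis~\eqref{eq4.16} is precisely the bookkeeping that guarantees $24(12\sqrt{2}k+4k+1)\alpha<1/200$, i.e.\ the hypothesis of Lemma~\ref{Length.II1Case}; this is why~\eqref{eq4.16} has $1/2200$ rather than $1/200$, since $\alpha=11\gamma$.)

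Now I would invoke Lemma~\ref{Decomposition} to split both algebras simultaneously along central projections $z_1,z_2,z_3$ into their finite type~$\mathrm I$, type~$\mathrm{II}_1$, and infinite parts, and handle each summand separately, taking the maximum of the three distance estimates at the end. The infinite part is disposed of quickly using the results of \cite{Christensen.Perturbations2} (an algebra with a properly infinite part has property $D_k$ for a universal $k$, hence close commutants by Proposition~\ref{LDK.Comm}); the finite type~$\mathrm I$ part is injective, so \cite{Christensen.NearInclusions} gives close commutants directly. These two cases give estimates dominated by the type~$\mathrm{II}_1$ estimate. The substantive case is the type~$\mathrm{II}_1$ summand, where I apply Lemma~\ref{Length.II1Case} with $\alpha=11\gamma$: the faithful-state hypothesis on $Z$ can be arranged by a maximality/direct-integral reduction, and the lemma yields $d(M',N')\leq 2\beta'+1200k\alpha(1+\beta')$ with $\beta'=K((1+28800k\alpha+48\alpha)^\ell-1)$. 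Substituting $\alpha=11\gamma$ and combining with the $10\gamma$ from the centre adjustment produces the stated bound~\eqref{eq4.17}, with $\beta=K((1+316800k\gamma)^\ell-1)$ (here $28800k\cdot11=316800k$ and the $48\alpha$ term is absorbed).

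The main obstacle is the arithmetic bookkeeping of constants: one must track the factor of $11$ coming from Lemma~\ref{Centre}, verify that~\eqref{eq4.16} forces every smallness hypothesis invoked downstream (in particular~\eqref{Length.II1Case.Est} for Lemma~\ref{Length.II1Case}, applied to $\alpha=11\gamma$), and check that the exponential term $\beta'$ from the lemma collapses to the cleaner $\beta$ in~\eqref{eq4.17} after the substitution. A secondary technical point is justifying the faithful-state reduction on $Z$ and confirming that the finite type~$\mathrm I$ and infinite cases genuinely give estimates no larger than the type~$\mathrm{II}_1$ one, so that the final bound is governed entirely by the $\mathrm{II}_1$ analysis.
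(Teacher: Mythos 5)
Your proposal is correct and follows essentially the same route as the paper's proof: pass to weak closures via Lemma 5 of \cite{Kadison.Kastler}, align the centres with Lemma \ref{Centre} (the source of the $10\gamma$ term), split by type via Lemma \ref{Decomposition}, dispose of the finite type $\mathrm{I}$ and properly infinite parts using properties $D_1$ and $D_{3/2}$ from \cite{Christensen.Perturbations2}, and apply Lemma \ref{Length.II1Case} with $\alpha=11\gamma$ to the type $\mathrm{II}_1$ part after the faithful-state reduction on the centre. Your constant tracking ($1/2200$ versus $1/200$, $28800\cdot 11 = 316800$, the $44\gamma$ and $66\gamma$ estimates being dominated by the $\mathrm{II}_1$ bound) matches the paper's calculation.
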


\begin{proof}
Let $M=\overline{A}^w$ and $N=\overline{B}^w$ and write $Z(M)$ and $Z(N)$ for the centres of $M$ and $N$ respectively.  Lemma 5 of \cite{Kadison.Kastler} gives $d(M,N)\leq d(A,B)=\gamma$. By Lemma \ref{Centre}, there is a unitary $u\in (Z(M)\cup Z(N))''$ such that $uZ(M)u^*=Z(uMu^*)=Z(N)$ and
\begin{equation}
\|u_1-I_{\mathcal H}\|\leq 5\gamma.
\end{equation}
Write $M_0=uMu^*$. Then 
\begin{equation}\label{L.2}
d(M_0,N)\leq 2\|u_1-I_{\mathcal H}\|+d(M,N)\leq 11\gamma
\end{equation}
Since $11\gamma<1/10$, Lemma \ref{Decomposition} applies. Thus we can find orthogonal projections $z_{\mathrm{I}_{\text{fin}}},z_{\mathrm{II}_1},z_{\infty}$ in $Z(M_0)$ which sum to 
$ I_{\mathcal H}$ such that:
\begin{itemize}
\item[\rm (i)] $M_0z_{\mathrm{I}_{\text{fin}}}$ and $Nz_{\mathrm{I}_{\text{fin}}}$ are finite type $\mathrm{I}$;
\item[\rm (ii)] $M_0z_\infty$ and $Nz_\infty$ are properly infinite;
\item[\rm (iii)] $M_0z_{\mathrm{II}_1}$ and $Nz_{\mathrm{II}_1}$ are type $\mathrm{II}_1$.
\end{itemize}

Finite type I von Neumann algebras are injective, so have property $D_1$ (\cite[Theorem 2.3]{Christensen.Perturbations2}) while properly infinite algebras have property $D_{3/2}$ (\cite[Theorem 2.4]{Christensen.Perturbations2}). Applying Proposition \ref{LDK.Comm} and Proposition \ref{Prelim.MetricNear} yields
\begin{equation}\label{eqnew2}
 d(M_0'z_{\mathrm{I}_{\text{fin}}},N'z_{\mathrm{I}_{\text{fin}}})\leq 2\cdot2d(M_0z_{\mathrm{I}_{\text{fin}}},Nz_{\mathrm{I}_{\text{fin}}})\leq 4d(M_0,N)\leq 44\gamma,
 \end{equation}
and
 \begin{equation}
d(M_0'z_\infty,N'z_\infty)\leq 2\cdot3d(M_0z_\infty,Nz_\infty)\leq 6d(M_0,N)\leq 66\gamma.
\end{equation}
Choose a maximal family of projections $(z_i)_{i\in \Lambda}$ in $Zz_{\mathrm{II}_1}$ so that each $Zz_i$ has a faithful state. For $\alpha\leq 11\gamma$, the inequality (\ref{Length.II1Case.Est}) follows from (\ref{eq4.16}) so the pairs $M_0z_i$ and $Nz_i$ satisfy the hypothesis of Lemma \ref{Length.II1Case} for each $i$.  The estimates of this lemma then give
\begin{equation}
d(M_0'z_i,N'z_i)\leq 2\beta+13200k\gamma(1+\beta).
\end{equation}
Combining all these cases gives the estimate
\begin{equation}
d(M_0',N')\leq 2\beta+13200k\gamma(1+\beta).
\end{equation}
We then use the estimate $d(M_0',M')\leq 10\gamma$ to obtain
\begin{equation}
d(M',N')\leq 10\gamma+2\beta+13200k\gamma(1+\beta),
\end{equation}
exactly as required.
\end{proof}

In order to use Theorem \ref{Length.Comm} to show that the property of having finite length transfers to close subalgebras, we need one final ingredient detailing how the local distance property behaves for close C$^*$-algebras with close commutants.\begin{lemma}\label{Length.PertLDK}
Let $A$ and $B$ be $C^*$-algebras on a Hilbert space $\mathcal H$. Suppose that $d(A,B)<\gamma$ and $d(A',B')<\eta$. Suppose that $A$ has propertry $LD_k$, where $2\eta+2k\gamma<1$. Then $B$ has property $LD_\const $ where
\begin{equation}\label{eq40.6}
 \const  = \frac{k}{1-2\eta-2k\gamma}.
\end{equation}
\end{lemma}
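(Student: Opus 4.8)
The plan is to verify the defining inequality $d(x,B')\le \const\,\|\ad(x)|_B\|$ directly for an arbitrary $x\in\mathbb B(\mathcal H)$, transferring the hypothesis $LD_k$ for $A$ across the two near inclusions $A\subseteq_\gamma B$ and $A'\subseteq_\eta B'$ provided by $d(A,B)<\gamma$ and $d(A',B')<\eta$ (Proposition \ref{Prelim.MetricNear} (ii)). Writing $\beta=\|\ad(x)|_B\|$ and $r=d(x,B')$, the target is a self-improving estimate of the form $r\le k\beta+(2\eta+2k\gamma)r$, which rearranges to $r\le \const\,\beta$ precisely because the hypothesis $2\eta+2k\gamma<1$ keeps the denominator $1-2\eta-2k\gamma$ positive.

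The first and decisive step is a translation to eliminate any dependence on $\|x\|$. Since both $d(\,\cdot\,,B')$ and $\ad(\,\cdot\,)|_B$ are unchanged under subtraction of an element of $B'$, I would fix $\varepsilon>0$, choose $z\in B'$ with $\|x-z\|<r+\varepsilon$, and replace $x$ by $y=x-z$. This $y$ satisfies $\|y\|<r+\varepsilon$, $\|\ad(y)|_B\|=\beta$ and $d(y,B')=r$, so the ``working norm'' is now governed by $r$ rather than by $\|x\|$.

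Next I would push $y$ through the three available estimates in turn. Using $A\subseteq_\gamma B$ to replace a unit-ball element of $A$ by one of $B$ gives $\|\ad(y)|_A\|\le\beta+2\gamma\|y\|$; applying $LD_k$ for $A$ then yields $a'\in A'$ with $\|y-a'\|\le k\|\ad(y)|_A\|+\varepsilon\le k\beta+2k\gamma\|y\|+\varepsilon$; and moving $a'$ into $B'$ via $A'\subseteq_\eta B'$ (rescaling to the unit ball) produces $b'\in B'$ with $\|a'-b'\|\le\eta\|a'\|\le\eta(\|y\|+\|y-a'\|)$. Collecting $r=d(y,B')\le\|y-a'\|+\|a'-b'\|$, inserting $\|y\|<r+\varepsilon$ and letting $\varepsilon\to0$, I arrive at $r\le k(1+\eta)\beta+(2k\gamma+\eta+2\eta k\gamma)r$. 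A short denominator comparison, valid because $2k\gamma<1$, shows that $1-2k\gamma-\eta-2\eta k\gamma\ge 1-2\eta-2k\gamma>0$ and that the constant $\tfrac{k(1+\eta)}{1-2k\gamma-\eta-2\eta k\gamma}$ is dominated by $\const=\tfrac{k}{1-2\eta-2k\gamma}$, giving $r\le\const\,\beta$ as desired.

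The hard part is not any individual estimate but the bookkeeping that forces every error term to be proportional to $r=d(x,B')$ rather than to $\|x\|$: without the initial translation, both the step ``approximate $y$ by $A'$'' and the step ``push $A'$ into $B'$'' contribute a term scaling like $\|x\|$, which would wreck the homogeneous conclusion. Obtaining a contraction constant strictly below $1$ is exactly what $2\eta+2k\gamma<1$ guarantees, and confirming that the crude constant produced by the collection step is no larger than the clean value $\const$ is the final, purely arithmetic, point to settle.
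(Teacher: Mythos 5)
Your argument is correct and follows essentially the same route as the paper's proof: translate $x$ by a near-minimising element of $B'$ so the working norm is governed by $d(x,B')$, transfer the derivation norm across $A\subseteq_\gamma B$, apply $LD_k$, and transfer the resulting estimate across $A'\subseteq_\eta B'$. The only difference is one of bookkeeping direction --- the paper bounds $d(x,A')$ from below by $(1-2\eta)\|x\|$, whereas you bound $d(x,B')$ from above via the chain $y\to a'\to b'$, producing the cruder constant $k(1+\eta)/(1-\eta-2k\gamma-2\eta k\gamma)$, and your final comparison of this with $k/(1-2\eta-2k\gamma)$ does check out (it reduces to $-2\eta^2\le 0$).
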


\begin{proof}
Consider an element $x\in \mathbb B(\mathcal H)\setminus B'$. By scaling we may assume that $\|\ad(x)|_B\| = 1$. By ultraweak compactness, there exists $b'\in B'$ so that $\|x-b'\| = d(x,B')$. The replacement of $x$ by $x-b'$ allows us to make the further assumption that $\|x\| = d(x,B')$. Our objective  now is to estimate $\|x\|$ from above.

Consider $a\in A, \|a\|\le 1$, and choose $b\in B$, $\|b\|\le 1$, so that $\|a-b\|<\gamma$. Then
\begin{equation}\label{eq40.7}
\|[x,a]\| \le \|[x,b]\| + \|[x,a-b]\|
\le 1+2\gamma\|x\|.
\end{equation}
Thus $\|\ad(x)|_A\|\le 1 +2\gamma\|x\|$. Let $T = \{t\in A'\colon \|x-t\| \le \|x\|\}$, non-empty since $0\in T$. The triangle inequality shows that each $t\in T$ satisfies $\|t\|\le 2\|x\|$. For each $t\in T$, choose $s\in B'$ so that $\|t-s\|\leq\eta\|t\|\le 2\eta\|x\|$. Then
\begin{equation}\label{eq40.8}
 \|x-t\| \ge \|x-s\| - \|t-s\| \ge \|x\|-2\eta\|x\|.
\end{equation}
Letting $t\in T$ vary, this yields
\begin{equation}\label{eq40.9}
 d(x,A') \ge (1-2\eta)\|x\|.
\end{equation}
Since $A$ has property $LD_k$, we obtain
\begin{equation}\label{eq40.10}
 (1-2\eta)\|x\| \le d(x,A') \le k\|\ad(x)|_A\| \le k+2k\gamma\|x\|.
\end{equation}
This implies that
\begin{equation}\label{eq40.11}
 \|x\| \le \frac{k}{1-2\eta-2k\gamma}.
\end{equation}
Since we also have $\|x\| = d(x,B')$ and $\|\ad(x)|_B\|=1$, this last inequality states that  $B$ has property $LD_\const $ for
\begin{equation}\label{eq40.12}
 \const  = \frac{k}{1-2\eta-2k\gamma},
\end{equation}
completing the proof.
\end{proof}

We are now in a position to establish the main result of this section: that C$^*$-algebras sufficiently close to those of finite length also have finite length.
\begin{theorem}\label{Length.DH}
Let $C$ be a $C^*$-algebra and let $A$ and $B$ be two $C^*$-subalgebras of $C$. Suppose that $d(A,B)<\gamma$, and  that $A$ has finite length at most $\ell$ with length constant at most $K$. Write $k=K\ell/2$,
\begin{equation}\label{Length.DH.Beta}
\beta=K\Big((1+316800k\gamma+528\gamma)^{\ell}-1\Big),
\end{equation}
and
\begin{equation}\label{Length.DH.Eta}
\eta=10\gamma+2\beta+13200k\gamma(1+\beta).
\end{equation}
If the inequalities
\begin{equation}\label{Length.DH.1}
24(12\sqrt{2}k+4k+1)\gamma<1/2200,\quad 2\eta+2k\gamma<1
\end{equation}
are satisfied, then $B$ has property $D_\const $ for
\begin{equation}\label{Length.DH.2}
\const =\frac{k}{1-2\eta-k\gamma}.
\end{equation}
In particular $B$ has finite length and the length of $B$ is at most $\lfloor 2\const \rfloor\leq
\lfloor K\ell \rfloor$.
\end{theorem}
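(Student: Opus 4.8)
The plan is to verify that $B$ has property $D_{\const}$ straight from Definition \ref{Prelim.LDK}, that is, to show that \emph{every} representation $\pi$ of $B$ has the local distance property $LD_{\const}$; the length bound is then read off at the end from Proposition \ref{Prelim.DKLength}. The essential difficulty is that the quantitative tools already in hand --- closeness of commutants (Theorem \ref{Length.Comm}) and the transfer of the local distance property (Lemma \ref{Length.PertLDK}) --- live on a \emph{fixed} Hilbert space carrying both algebras, whereas a general representation of $B$ need not be the restriction of a representation of $C$. The first task is therefore to reduce an arbitrary representation of $B$ to the situation in which $A$ and $B$ sit together and the earlier results apply.

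To carry out this reduction I would fix a nondegenerate representation $\pi\colon B\to\mathbb B(\mathcal K)$, extend it by Arveson's extension theorem to a completely positive map $\Phi\colon C\to\mathbb B(\mathcal K)$ (unitising if necessary, which is harmless since Definition \ref{Def.Length} does not distinguish the unital and non-unital cases), and then apply Stinespring's dilation to write $\Phi=V^*\rho(\cdot)V$ for a representation $\rho$ of $C$ on some $\tilde{\mathcal K}\supseteq V\mathcal K$. Since $\Phi$ restricts to the $^*$-homomorphism $\pi$ on $B$, the subalgebra $B$ lies in the multiplicative domain of $\Phi$; hence the projection $P=VV^*$ onto $\mathcal K$ commutes with $\rho(B)$ and $\pi$ is unitarily equivalent to the subrepresentation $b\mapsto\rho(b)|_{P\tilde{\mathcal K}}$. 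This realises $\pi$ as a compression of $\rho|_B$ by a projection in $\rho(B)'$.

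On $\tilde{\mathcal K}$ the full machinery now applies. Representations do not increase the $C^*$-norm, so $d(\rho(A),\rho(B))\le d(A,B)<\gamma$, and finite length descends to the quotient $\rho(A)$ of $A$, which still has length at most $\ell$ and length constant at most $K$. By Proposition \ref{Prelim.LengthDK} the algebra $\rho(A)$ has property $D_k$ with $k=K\ell/2$, so in particular $\rho(A)\subseteq\mathbb B(\tilde{\mathcal K})$ has $LD_k$. Applying Theorem \ref{Length.Comm} to the pair $\rho(A),\rho(B)$ bounds $d(\rho(A)',\rho(B)')$ by the quantity $\eta$ of \eqref{Length.DH.Eta}, and then Lemma \ref{Length.PertLDK} shows that $\rho(B)$ has property $LD_{\const}$ on $\tilde{\mathcal K}$, with $\const$ as in \eqref{Length.DH.2}. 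The first inequality in \eqref{Length.DH.1} is exactly what is needed to invoke Theorem \ref{Length.Comm}, while the second keeps the denominator in \eqref{Length.DH.2} positive so that Lemma \ref{Length.PertLDK} is available.

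It remains to push $LD_{\const}$ down from $\rho(B)$ to the subrepresentation $\pi$. Because $P\in\rho(B)'$, reduction of the commutant gives $\pi(B)'=P\rho(B)'P$ on $P\tilde{\mathcal K}$, and for $x\in\mathbb B(P\tilde{\mathcal K})$ the extension $PxP$ satisfies $\|\ad(PxP)|_{\rho(B)}\|\le\|\ad(x)|_{\pi(B)}\|$, since $\|\rho(b)\|\le 1$ forces $\|P\rho(b)P\|\le 1$. Compressing by $P$ any near-commutant element supplied by $LD_{\const}$ for $\rho(B)$ then produces an element of $\pi(B)'$ within $\const\|\ad(x)|_{\pi(B)}\|$ of $x$, so $\pi$ has $LD_{\const}$. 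As $\pi$ was arbitrary, $B$ has $D_{\const}$, and Proposition \ref{Prelim.DKLength} yields length at most $\lfloor 2\const\rfloor$; an elementary estimate using \eqref{Length.DH.1}, which forces the perturbation to be tiny relative to $K\ell$, keeps $2\const$ below the first integer exceeding $K\ell=2k$ and hence gives $\lfloor 2\const\rfloor\le\lfloor K\ell\rfloor$. I expect the dilation-and-descent step to be the main obstacle: every other ingredient is a direct application of results already established, whereas passing through \emph{all} representations of $B$ (not merely those restricted from $C$) is where the argument genuinely has to do work, with the bookkeeping of the constants $\beta$, $\eta$ and $\const$ being routine but tedious.
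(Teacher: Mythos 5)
Your argument is correct and is essentially the paper's own proof: extend the arbitrary representation $\pi$ of $B$ to a representation $\rho$ of $C$ on a larger space, note that $\rho(A)$ inherits length $\ell$ and constant $K$, apply Theorem \ref{Length.Comm} to bound $d(\rho(A)',\rho(B)')$ by $\eta$, feed this into Lemma \ref{Length.PertLDK}, and finish with Proposition \ref{Prelim.DKLength}. The only cosmetic difference is that you build the extension by hand via Arveson--Stinespring and the multiplicative domain and spell out the compression by $P\in\rho(B)'$, whereas the paper simply cites \cite[Proposition II.6.4.11]{Blackadar.OperatorAlgebras} and leaves that descent implicit.
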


\begin{proof}
Let $\pi:B\rightarrow\mathbb B(\mathcal K)$ be a representation of $B$ on a Hilbert space $\mathcal K$, and let $\rho:C\rightarrow\mathbb B(\mathcal H)$ be a representation of $C$ on a larger Hilbert space $\mathcal H$ so that $\rho$ extends $\pi$ (see \cite[Proposition II.6.4.11]{Blackadar.OperatorAlgebras}, for example). Since $\rho(A)$ is a quotient of $A$, the length of $\rho(A)$ is at most $\ell$ with length constant at most $K$. Theorem \ref{Length.Comm} gives
\begin{equation}\label{Length.DH.3}
d(\rho(A)',\rho(B)')< 10\gamma+2\beta+13200k\gamma(1+\beta)=\eta,
\end{equation}
where the first inequality of (\ref{Length.DH.1}) is the estimate required to apply Theorem \ref{Length.Comm}. The second inequality of (\ref{Length.DH.1}) is the hypothesis of Lemma \ref{Length.PertLDK} and so $\pi(B)$ has property $LD_\const $, where $\const $ is given by (\ref{Length.DH.2}).
Since the representation $\pi$ of $B$ was arbitrary, $B$ has property $D_\const $, as required.  The final statement of the Theorem follows from Proposition \ref{Prelim.DKLength}.
\end{proof}

\begin{remark} 
While it is obvious that sufficiently small choices of
$\gamma$ will allow us to satisfy \eqref{Length.DH.1}, the dependence of
these inequalities on $K$ and $\ell$ does not make clear
the range of admissible values for this constant. We
consider here one example. Suppose that $\ell =3$ and $K=1$
for $A$,
a situation that occurs when $A$ is a
stable but non-nuclear C$^*$-algebra, for instance. Then direct calculation shows
that \eqref{Length.DH.1} is satisfied for $\gamma < 10^{-7}$.$\hfill\square$
\end{remark}

We now turn to some immediate applications of Theorem \ref{Length.DH}. The first corollary follows from \cite[Theorem 3.1]{Christensen.NearInclusions}.
\begin{corollary}\label{Length.Cor}
Let $A$ and $B$ be C$^*$-subalgebras of some C$^*$-algebra $C$ and let $E$ be a nuclear C$^*$-algebra.  Suppose that $A$ has finite length at most $\ell$ and length constant at most $K$ and suppose that $d(A,B)<\gamma$.  Let $k,\beta,\eta$ be as in Theorem \ref{Length.DH}. If $\gamma$ satisfies the inequalities (\ref{Length.DH.1}), then
\begin{equation}
B\otimes E\subset_\mu A\otimes E,
\end{equation}
where
\begin{equation}
\mu=\frac{6k\gamma}{1-2\eta-k\gamma}.
\end{equation}
In particular, $d(A\otimes E,B\otimes E)< 2\mu$.
\end{corollary}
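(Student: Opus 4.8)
The plan is to combine the finite-length conclusion of Theorem~\ref{Length.DH} for $B$ with the property-$D_k$ version of the tensorial lifting theorem \cite[Theorem 3.1]{Christensen.NearInclusions}; this is the lifting referred to in the discussion following Corollary~\ref{Length.TensorNuclear}, which takes a near inclusion of an algebra with property $D_k$ and amplifies it by a nuclear tensor factor at the cost of multiplying the constant by $6k$. First I would represent $C$ faithfully on a Hilbert space $\mathcal H$, so that $A,B\subset\mathbb B(\mathcal H)$ and the algebras $A\otimes E$, $B\otimes E$ become subalgebras of $\mathbb B(\mathcal H)\otimes E$. Since $d(A,B)<\gamma$, Proposition~\ref{Prelim.MetricNear}(ii) (applied to any $\gamma'<\gamma$ with $d(A,B)\le\gamma'$) gives the strict near inclusions $A\subset_\gamma B$ and $B\subset_\gamma A$.

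The key point is that the hypotheses (\ref{Length.DH.1}) of the corollary are exactly those of Theorem~\ref{Length.DH}, so that theorem applies and shows $B$ has property $D_\const$ with $\const=k/(1-2\eta-k\gamma)$. Feeding the near inclusion $B\subset_\gamma A$ into \cite[Theorem 3.1]{Christensen.NearInclusions}, using property $D_\const$ for $B$, I obtain $B\otimes E\subset_{6\const\gamma}A\otimes E$ for every nuclear $E$; since
\[
6\const\gamma=\frac{6k\gamma}{1-2\eta-k\gamma}=\mu,
\]
this is precisely the asserted near inclusion $B\otimes E\subset_\mu A\otimes E$. I emphasise that the nontrivial direction here is $B\to A$: it relies on the fact, established only through Theorem~\ref{Length.DH}, that the perturbed algebra $B$ also has finite length.

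For the final distance estimate I would produce the reverse near inclusion from the finite length of $A$ itself. By Proposition~\ref{Prelim.LengthDK}, $A$ has property $D_k$ with $k=K\ell/2$, so applying \cite[Theorem 3.1]{Christensen.NearInclusions} to $A\subset_\gamma B$ gives $A\otimes E\subset_{6k\gamma}B\otimes E$. Because $1-2\eta-k\gamma\le 1$ we have $6k\gamma\le\mu$, hence $A\otimes E\subset_\mu B\otimes E$ as well. Proposition~\ref{Prelim.MetricNear}(iii) then converts the two strict near inclusions $A\otimes E\subset_\mu B\otimes E$ and $B\otimes E\subset_\mu A\otimes E$ into $d(A\otimes E,B\otimes E)<2\mu$.

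I expect the only real work to be bookkeeping rather than mathematics: confirming that the distance constant $\const$ delivered by Theorem~\ref{Length.DH} coincides with the constant buried in $\mu$, and checking that the two applications of \cite[Theorem 3.1]{Christensen.NearInclusions} (one through property $D_\const$ for $B$, the other through property $D_k$ for $A$) both respect the same bound $\mu$. No operator-algebraic input beyond Theorem~\ref{Length.DH}, Proposition~\ref{Prelim.LengthDK}, and the tensorial lifting is required.
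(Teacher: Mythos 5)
Your argument is exactly the one the paper intends: it derives property $D_{\const}$ for $B$ from Theorem \ref{Length.DH}, feeds $B\subset_\gamma A$ into \cite[Theorem 3.1]{Christensen.NearInclusions} to get $B\otimes E\subset_{6\const\gamma}A\otimes E$ with $6\const\gamma=\mu$, and obtains the reverse near inclusion from property $D_k$ for $A$ before applying Proposition \ref{Prelim.MetricNear}(iii). The paper states only that the corollary ``follows from \cite[Theorem 3.1]{Christensen.NearInclusions},'' and your proposal fills in precisely that deduction correctly.
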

Using the fact that the distance between any two C$^*$-subalgebras of the same C$^*$-algebra is at most $1$, we get the following alternative formulation of the previous corollary.
\begin{corollary}
For each $\ell\geq 1$ and $K\geq 1$ there exists a constant $L_{\ell,K}$ (which can be found explicitly) such that whenever $A$ and $B$ are C$^*$-subalgebras of some C$^*$-algebra $C$ such that $A$ has length at most $\ell$ and length constant at most $K$, then
\begin{equation}
d(A\otimes E,B\otimes E)\leq L_{\ell,K}d(A,B)
\end{equation} 
for every nuclear C$^*$-algebra $E$.
\end{corollary}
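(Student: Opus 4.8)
The plan is to split into two regimes according to the size of $\gamma:=d(A,B)$, combining Corollary~\ref{Length.Cor} in the small-$\gamma$ regime with the trivial observation that distances are bounded by $1$ in the large-$\gamma$ regime. Indeed, as noted before the statement, any two C$^*$-subalgebras of a common C$^*$-algebra lie at distance at most $1$; applying this to $A\otimes E$ and $B\otimes E$, both of which sit inside $C\otimes E$ (unambiguously, since $E$ is nuclear), gives $d(A\otimes E,B\otimes E)\leq 1$ unconditionally. It is this uniform ceiling that allows the local estimate of Corollary~\ref{Length.Cor} to be promoted to a global linear bound.

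First I would fix $\ell$ and $K$, set $k=K\ell/2$, and produce an explicit threshold $\gamma_0=\gamma_0(\ell,K)>0$ such that both inequalities of~\eqref{Length.DH.1} hold, and moreover $2\eta+k\gamma\leq 1/2$ holds, for all $\gamma<\gamma_0$. This is possible because with $k$ fixed the quantity $\beta=K\big((1+316800k\gamma)^{\ell}-1\big)$ tends to $0$ as $\gamma\to 0$, and hence so does $\eta=10\gamma+2\beta+13200k\gamma(1+\beta)$; a direct computation then pins down an admissible value of $\gamma_0$ depending only on $\ell$ and $K$.

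For $\gamma<\gamma_0$ I would choose any $\gamma'$ with $\gamma<\gamma'<\gamma_0$ and apply Corollary~\ref{Length.Cor} with the parameter $\gamma'$ in place of $\gamma$, obtaining $d(A\otimes E,B\otimes E)<2\mu'$ where $\mu'=6k\gamma'/(1-2\eta'-k\gamma')$ and $\eta'$ is the value of $\eta$ at $\gamma'$. Since on $[0,\gamma_0)$ the denominator is at least $1/2$ by the choice of $\gamma_0$, this yields $d(A\otimes E,B\otimes E)<24k\gamma'$; letting $\gamma'\downarrow\gamma$ and using the continuity of these explicit expressions gives $d(A\otimes E,B\otimes E)\leq 24k\gamma$. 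For $\gamma\geq\gamma_0$ the ceiling $d(A\otimes E,B\otimes E)\leq 1$ combines with $\gamma/\gamma_0\geq 1$ to give $d(A\otimes E,B\otimes E)\leq 1\leq \gamma/\gamma_0$. Taking $L_{\ell,K}=\max\{24k,\,\gamma_0^{-1}\}$ then covers both regimes and proves the asserted inequality; since $\gamma_0$, $k$ and $\gamma_0^{-1}$ are all explicit in $\ell$ and $K$, so is $L_{\ell,K}$.

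I do not expect a genuine obstacle here: the substantive content is already contained in Corollary~\ref{Length.Cor}, and the present statement merely repackages it as a global Lipschitz-type estimate. The only real work is the elementary but slightly tedious estimate that produces the threshold $\gamma_0$ and verifies the denominator bound; everything else is the standard device of paying for the large-argument behaviour with the factor $\gamma_0^{-1}$, made possible by the uniform bound $d(A\otimes E,B\otimes E)\leq 1$.
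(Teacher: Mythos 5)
Your argument is correct and is exactly the one the paper intends: the corollary is stated as an immediate reformulation of Corollary \ref{Length.Cor}, obtained by combining the small-$\gamma$ estimate there with the trivial bound $d(A\otimes E,B\otimes E)\leq 1$ in the large-$\gamma$ regime. Your write-up simply makes explicit the threshold $\gamma_0$ and the resulting constant $L_{\ell,K}=\max\{24k,\gamma_0^{-1}\}$, which the paper leaves to the reader.
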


Raeburn and Taylor \cite{Raeburn.CohomologyPerturbation} showed the existence of a constant $\gamma_0>0$ with the property that if two von Neumann algebras $M$ and $N$ have $d(M,N)<\gamma_0$, then $M$ is injective if and only if $N$ is injective.  As a consequence (using \cite[Lemma 5]{Kadison.Kastler} and that a C$^*$-algebra $A$ is nuclear if and only if $A^{**}$ is injective), it follows that two C$^*$-algebras $A$ and $B$ with $d(A,B)<\gamma_0$ are either both nuclear or both non-nuclear.  This argument was given in \cite[Theorem 6.5]{Christensen.NearInclusions}, in which it was also shown that one can take $\gamma_0=1/101$.  Finite dimensional C$^*$-algebras have length $1$ (with length constant $1$). In \cite{Pisier.SimilarityNuclear}, Pisier characterised nuclearity using the similarity length showing that a C$^*$-algebra is nuclear if and only if it has length at most $2$ (it then follows that the length constant must be $1$). We can use this characterisation and Theorem \ref{Length.DH} to recapture the stablity of nuclearity under small perturbations: if we take $\ell=2$ and $K=1$ in Theorem \ref{Length.DH}, then there is certainly a constant $\gamma_0$ for which $\gamma<\gamma_0$ satisfy (\ref{Length.DH.1}) and the $\const $ given in (\ref{Length.DH.2}) has $\const <3/2$.  It follows that if $A$ is nuclear and $d(A,B)<\gamma_0$, then $B$ has length at most $2$ so is nuclear.   We obtain a similar statement for algebras of higher lengths, though as our results do not enable us to control the length constant we must restrict to the case of length constant $1$, (although no example of a C$^*$-algebra with finite length and length constant strictly larger than $1$ is known). In the case $\ell(A)=3$ and $d(A,B)<\gamma_0$, we obtain the exact value $\ell(B)=3$, since any smaller value would imply nuclearity of $B$ and hence of $A$, by the preceding remarks. This would give the contradiction $\ell(A)\leq 2$. We record this discussion in the following result, using the notation above.
\begin{corollary}
For each $\ell\geq 1$, there exists a constant $\gamma_\ell>0$ such that if $A$ and $B$ are two C$^*$-subalgebras of a C$^*$-algebra $C$ with $d(A,B)<\gamma_\ell$ and $A$ has length at most $\ell$ with length constant at most $1$, then $B$ has length at most $\ell$. In particular, if $\ell(A)=3$ and $\gamma_3$ is chosen to be less than $\gamma_0$, then $\ell(B)=3$.
\end{corollary}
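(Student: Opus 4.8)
The plan is to deduce this corollary almost directly from Theorem \ref{Length.DH}, with the only genuine work being to verify that the required inequalities can be met for small $\gamma$ and that the floor bound collapses to $\ell$ when $K=1$. First I would fix $\ell\geq 1$ and apply Theorem \ref{Length.DH} with $K=1$, so that $k=K\ell/2=\ell/2$ and the quantities $\beta$ and $\eta$ of \eqref{Length.DH.Beta} and \eqref{Length.DH.Eta} become explicit functions of $\gamma$ alone. Since $\beta=(1+316800k\gamma+528\gamma)^{\ell}-1\to 0$ as $\gamma\to 0^+$, and hence $\eta\to 0$, both inequalities of \eqref{Length.DH.1} — namely $24(12\sqrt{2}k+4k+1)\gamma<1/2200$ and $2\eta+2k\gamma<1$ — hold for all sufficiently small $\gamma$. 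I would therefore choose $\gamma_\ell>0$ small enough that $\gamma<\gamma_\ell$ forces both. Theorem \ref{Length.DH} then gives that $B$ has property $D_\const$ and, in particular, length at most $\lfloor 2\const\rfloor\leq\lfloor K\ell\rfloor=\lfloor \ell\rfloor=\ell$, where the final equality uses $K=1$ together with the fact that $\ell$ is an integer. This establishes the first assertion.

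For the final ``in particular'' statement I would take $\ell=3$ and arrange $\gamma_3<\gamma_0$ while keeping $\gamma_3\leq\gamma_\ell$ from the first part, so that both the length bound and the nuclearity-stability result are simultaneously available. The first part already yields $\ell(B)\leq 3$, so it remains to rule out $\ell(B)\leq 2$. Suppose $\ell(B)\leq 2$. By Pisier's characterisation of nuclearity via the similarity length \cite{Pisier.SimilarityNuclear}, $B$ is then nuclear. Since $d(A,B)<\gamma_3<\gamma_0=1/101$, the stability of nuclearity under perturbations of size less than $\gamma_0$, established in \cite[Theorem 6.5]{Christensen.NearInclusions} via \cite[Lemma 5]{Kadison.Kastler} and the equivalence of nuclearity of $A$ with injectivity of $A^{**}$, forces $A$ to be nuclear as well, whence $\ell(A)\leq 2$. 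This contradicts $\ell(A)=3$, so $\ell(B)=3$.

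I do not expect a substantive obstacle here, since all the analytic difficulty is carried by Theorem \ref{Length.DH}. The two points requiring care are, first, that the collapse of the floor bound $\lfloor K\ell\rfloor$ to $\ell$ relies essentially on $K=1$: with the present estimates a length constant larger than $1$ could produce $\lfloor K\ell\rfloor>\ell$, so the hypothesis of length constant $1$ cannot be dropped. Second, for the case $\ell=3$, the single threshold $\gamma_3$ must be coordinated so that it lies simultaneously below $\gamma_0$ and below the $\gamma_\ell$ supplied by the first part, which is what permits both the length bound and the nuclearity dichotomy to be invoked together.
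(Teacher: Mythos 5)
Your argument is correct and is essentially the paper's own: the corollary is recorded there as a summary of exactly this discussion — apply Theorem \ref{Length.DH} with $K=1$ and $\gamma$ small to get $\ell(B)\leq\lfloor K\ell\rfloor=\ell$, then for $\ell=3$ rule out $\ell(B)\leq 2$ via Pisier's characterisation of nuclearity as length at most $2$ together with the stability of nuclearity below $\gamma_0$. The only point worth making explicit (which the paper does for $\ell=2$ by demanding $\const<3/2$) is that $\gamma_\ell$ should also be taken small enough that $\const<(\ell+1)/2$, since $2\const=K\ell/(1-2\eta-k\gamma)$ slightly exceeds $K\ell$; this is harmless because $\const\to\ell/2$ as $\gamma\to 0$.
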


If we are not concerned about the exact value of the length and only consider whether C$^*$-algebras have the similarity property, we obtain the next corollary. 
\begin{corollary}
Let $C$ be a C$^*$-algebra.  The set of C$^*$-subalgebras with the similarity property is an open subset in the Kadison-Kastler metric $d(\cdot,\cdot)$.
\end{corollary}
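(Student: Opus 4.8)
The plan is to deduce this directly from Theorem \ref{Length.DH}, which carries essentially all of the content. Recall from the discussion in Section \ref{Sim} that, by the work of Haagerup and Pisier, a C$^*$-algebra has the similarity property if and only if it has finite length. It therefore suffices to show that the collection of finite-length C$^*$-subalgebras of $C$ is open in the metric $d$.

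First I would fix an arbitrary C$^*$-subalgebra $A$ of $C$ with the similarity property and choose an integer $\ell$ and a constant $K$ so that $A$ has length at most $\ell$ with length constant at most $K$; set $k=K\ell/2$. With $\ell$ and $K$ now fixed, the quantities $\beta$ and $\eta$ appearing in \eqref{Length.DH.Beta} and \eqref{Length.DH.Eta} become functions of $\gamma$ alone, and each tends to $0$ as $\gamma\to 0$. Consequently both inequalities in \eqref{Length.DH.1} hold for all sufficiently small $\gamma$ (as already noted in the Remark following Theorem \ref{Length.DH}); I would fix such a threshold $\gamma_0>0$, depending only on $\ell$ and $K$.

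Now for any C$^*$-subalgebra $B$ of $C$ with $d(A,B)<\gamma_0$, the hypotheses of Theorem \ref{Length.DH} are met, so $B$ has property $D_{\const}$ for the constant $\const$ of \eqref{Length.DH.2}, and hence finite length (of length at most $\lfloor 2\const\rfloor$) by Proposition \ref{Prelim.DKLength}. Thus $B$ has the similarity property. Since the open ball $\{B : d(A,B)<\gamma_0\}$ about $A$ consists entirely of algebras with the similarity property and $A$ was arbitrary, the set is open. I do not anticipate any real obstacle here: the substance is entirely contained in Theorem \ref{Length.DH}, and the only point requiring verification is that the radius $\gamma_0$ may be taken strictly positive, which follows from the continuity of $\beta$ and $\eta$ in $\gamma$ together with their vanishing at $\gamma=0$.
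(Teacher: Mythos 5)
Your proposal is correct and follows exactly the route the paper intends: the corollary is stated without proof as an immediate consequence of Theorem \ref{Length.DH}, via the Haagerup--Pisier equivalence of the similarity property with finite length, with the only point needing attention being that the constraints \eqref{Length.DH.1} are satisfied for all sufficiently small $\gamma$ once $\ell$ and $K$ are fixed. Your verification of that last point (continuity of $\beta$ and $\eta$ in $\gamma$ and their vanishing at $\gamma=0$) is exactly what is needed.
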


Finally note that one can define a natural pseudometric $d_{\text{cb}}(\cdot,\cdot)$ on the set of all C$^*$-subalgebras of a C$^*$-algebra $A$ by
$$
d_{\text{cb}}(A,B)=\sup_{n\in\mathbb N}d(A\otimes \mathbb M_n,B\otimes \mathbb M_n),
$$
where we measure the distance between $A\otimes\mathbb M_n$ and $B\otimes\mathbb M_n$ in $C\otimes\mathbb M_n$.  This is only a pseudometric as in general there is no reason why $d_{\text{cb}}(A,B)<\infty$.  However Corollary 4.7 shows that for each $\ell\geq 1$ and $K\geq 1$, $d_{\text{cb}}(\cdot,\cdot)$ is equivalent to $d(\cdot,\cdot)$ provided we work on the open set of those C$^*$-algebras sufficiently close to one of length at most $\ell$ and length constant at most $K$.  In the next section, this observation enables us to show that on these sets sufficiently close C$^*$-algebras have isomorphic $K$-theories.

\section{$K$-theory and traces}\label{KTrace}

The classification programme for nuclear C$^*$-algebras was introduced by Elliott in  \cite{Elliott.ClassificationAF}, in which separable AF C$^*$-algebras were classified by their \emph{local semigroups},  the Murray-von Neumann equivalence classes of projections with addition defined where it makes sense.  In \cite{Phillips.PerturbationAF}, J. Phillips and Raeburn showed that sufficiently close C$^*$-algebras have isomorphic local semigroups and deduced that sufficiently close separable AF C$^*$-algebras must be isomorphic. Subsequently, Khoshkam examined the $K$-theory of close subalgebras in \cite{Khoshkam.PerturbationK}, showing that sufficiently close nuclear C$^*$-algebras have isomorphic $K$-groups and so opened the road to using classification results to resolve perturbation problems.  As Khoshkam notes, the argument of \cite{Khoshkam.PerturbationK} only uses nuclearity to lift  a near inclusion $A\subset_\gamma B$ with $A$ nuclear to near inclusions $A\otimes\mathbb M_n\subset_{6\gamma}B\otimes\mathbb M_n$ for all $n$, via property $D_1$ and \cite[Theorem 3.1]{Christensen.NearInclusions}.

Let $A,B$ be C$^*$-subalgebras of a C$^*$-algebra $C$.  Write $\tilde{C}$ for the C$^*$-algebra obtained by adding a new unit $I$ to $C$ (even if $C$ already has a unit) and let $\tilde{A}=C^*(A,I)$ and $\tilde{B}=C^*(B,I)$ so that $\tilde{A}$ and $\tilde{B}$ share the same unit.  Recall that $K_0(A)$ is the kernel of the natural map $K_0(\tilde{A})\rightarrow K_0(\mathbb CI)\cong\mathbb Z$ and that $K_1(A)$ is naturally isomorphic to $K_1(\tilde{A})$ (as $K_1(\mathbb CI)=\{0\}$).  

\begin{theorem}[Khoshkam --- {\cite[Proposition 2.4, Remark 2.5]{Khoshkam.PerturbationK}}]\label{Trace.K}
Let $A,B$ be C$^*$-subalgebras of a C$^*$-algebra $C$. Suppose that there exists $\gamma\leq 1/3$ such that $A\otimes\mathbb M_n\subset_\gamma B\otimes\mathbb M_n$ for all $n\in\mathbb N$.  Then there are homomorphisms $\Phi_0:K_0(A)\rightarrow K_0(B)$ and $\Phi_1:K_1(A)\rightarrow K_1(B)$ defined as follows.
\begin{itemize}
\item[\rm (i)] Given a projection $p\in \mathbb M_n(\tilde{A})$, choose a projection $q\in \mathbb M_n(\tilde{B})$ with $\|p-q\|<\sqrt{2}\gamma$. Define $\Phi_0([p]_0)=[q]_0$.  This is well defined and extends to a homomorphism $K_0(\tilde{A})\rightarrow K_0(\tilde{B})$ which induces a homorphism $\Phi_0:K_0(A)\rightarrow K_0(B)$.
\item[\rm (ii)] Given a unitary $u\in \mathbb M_n(\tilde{A})$, choose a unitary $v\in\mathbb M_n(\tilde{B})$ with $\|u-v\|<\sqrt{2}\gamma$. Define $\Phi_1([u]_1)=[v]_1$.  This is well defined and extends to a homomorphism $K_1(A)\cong K_1(\tilde{A})\rightarrow K_1(\tilde{B})\cong K_1(B)$.
\end{itemize}
If, in addition, $B\otimes\mathbb M_n\subset_{\gamma'} A\otimes\mathbb M_n$ for some $\gamma'\leq 1/3$ and for all $n\in \mathbb N$, then $\Phi_0$ and $\Phi_1$ are isomorphisms.
\end{theorem}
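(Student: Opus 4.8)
The plan is to build homomorphisms in the reverse direction and show that both composites are identities. The additional hypothesis $B\otimes\mathbb M_n\subset_{\gamma'}A\otimes\mathbb M_n$ is precisely the hypothesis of the theorem with the roles of $A$ and $B$ interchanged, so parts (i) and (ii) applied in this reversed situation produce homomorphisms $\Psi_0\colon K_0(B)\to K_0(A)$ and $\Psi_1\colon K_1(B)\to K_1(A)$ defined by the same recipe: a projection $q\in\mathbb M_n(\tilde B)$ is sent to $[q']_0$ for any projection $q'\in\mathbb M_n(\tilde A)$ with $\|q-q'\|<\sqrt 2\gamma'$, and correspondingly on unitaries. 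I would then verify $\Psi_0\circ\Phi_0=\mathrm{id}_{K_0(A)}$, $\Phi_0\circ\Psi_0=\mathrm{id}_{K_0(B)}$, and the analogous identities for $\Psi_1,\Phi_1$; this forces $\Phi_0$ and $\Phi_1$ to be isomorphisms with inverses $\Psi_0$ and $\Psi_1$.

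For the $K_0$ composite, since both maps are well defined independently of the choice of approximant, I would evaluate $\Psi_0\circ\Phi_0$ on a generator $[p]_0$ by making convenient choices. Starting from a projection $p\in\mathbb M_n(\tilde A)$, choose $q\in\mathbb M_n(\tilde B)$ with $\|p-q\|<\sqrt 2\gamma$ realising $\Phi_0([p]_0)=[q]_0$, then choose $p'\in\mathbb M_n(\tilde A)$ with $\|q-p'\|<\sqrt 2\gamma'$ realising $\Psi_0([q]_0)=[p']_0$. The triangle inequality gives
\begin{equation}
\|p-p'\|<\sqrt 2(\gamma+\gamma')\leq 2\sqrt 2/3<1,
\end{equation}
so Proposition \ref{Prelim.Estimates} (iii), applied in the unital algebra $\mathbb M_n(\tilde A)$, produces a unitary conjugating $p$ to $p'$. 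Hence $[p]_0=[p']_0$ in $K_0(\tilde A)$, which shows the composite fixes every generator and is therefore the identity on all of $K_0(\tilde A)$; restricting to the subgroup $K_0(A)$ yields $\Psi_0\circ\Phi_0=\mathrm{id}_{K_0(A)}$. Interchanging $A$ and $B$ gives $\Phi_0\circ\Psi_0=\mathrm{id}_{K_0(B)}$.

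The $K_1$ argument is identical in structure. Given a unitary $u\in\mathbb M_n(\tilde A)$, choosing approximating unitaries $v\in\mathbb M_n(\tilde B)$ and $u'\in\mathbb M_n(\tilde A)$ as above yields $\|u-u'\|<\sqrt 2(\gamma+\gamma')\leq 2\sqrt 2/3<2$, whence $\|u^*u'-I\|<2$ and the spectrum of the unitary $u^*u'$ avoids $-1$. Thus $u^*u'$ lies in the connected component of the identity in the unitary group of $\mathbb M_n(\tilde A)$, giving $[u]_1=[u']_1$ and $\Psi_1\circ\Phi_1=\mathrm{id}_{K_1(A)}$; the reverse composite is handled symmetrically.

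Once the reverse maps are in place the argument is essentially routine, and the only point requiring genuine care is the numerical estimate. The hypothesis $\gamma,\gamma'\leq 1/3$ is exactly what makes $\sqrt 2(\gamma+\gamma')\leq 2\sqrt 2/3<1$, which is precisely the threshold below which close projections are unitarily equivalent (and comfortably below the threshold $2$ needed for close unitaries to be homotopic). So the main thing to confirm is this bound; everything else follows from the well-definedness already established in parts (i) and (ii), together with the standard facts about closeness of projections and unitaries in $K$-theory.
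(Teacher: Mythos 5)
The paper does not actually prove this theorem: it is imported verbatim from Khoshkam, and the only argument supplied in the text is the remark following the statement, namely that the required approximants exist because a near inclusion $A\otimes\mathbb M_n\subset_\gamma B\otimes\mathbb M_n$ passes to $\tilde A\otimes\mathbb M_n\subset_{2\gamma}\tilde B\otimes\mathbb M_n$, after which Proposition \ref{Prelim.Estimates}~(ii) supplies projections and unitaries at the stated distances. So there is no in-paper proof to measure yours against. That said, your argument for the final assertion (the isomorphism claim, which is Khoshkam's Remark 2.5) is correct and is the expected one: the reversed hypothesis produces reverse maps $\Psi_*$ by symmetry, the two successive approximants differ from the original by at most $\sqrt2(\gamma+\gamma')\le 2\sqrt2/3<1$, so the projections are unitarily equivalent by Proposition \ref{Prelim.Estimates}~(iii) and the unitaries are homotopic because the spectrum of $u^*u'$ omits $-1$, and a group homomorphism fixing every generating class is the identity.

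The gap is that you prove only the last sentence of the theorem. Parts (i) and (ii) --- that $\Phi_0$ and $\Phi_1$ are well defined and additive --- constitute the substance of Khoshkam's Proposition~2.4, and you invoke them rather than prove them (``the well-definedness already established in parts (i) and (ii)''), which is circular if the whole statement is the target. Concretely, what is missing is: the existence of the approximants in the first place (the hypothesis is a near inclusion of $A\otimes\mathbb M_n$, not of $\mathbb M_n(\tilde A)$, so one must first pass to unitizations at the cost of doubling $\gamma$); the check that two approximants $q,q'$ of the same $p$ satisfy $[q]_0=[q']_0$ (true, since $\|q-q'\|<2\sqrt2\gamma<1$); the check that $[q]_0$ depends only on the class $[p]_0$ and not on the representative, which requires approximating the implementing partial isometry or unitary and correcting it, not merely approximating projections; and additivity with respect to direct sums. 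These are arguments of the same flavour as the ones you do give, but they are not automatic and they are part of the statement being proved.
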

\noindent The choices required to define the maps above can be made. The remarks of \cite[1.4]{Khoshkam.PerturbationK} show that if $A\otimes\mathbb M_n\subset_\gamma B\otimes\mathbb M_n$, then $\tilde{A}\otimes\mathbb M_n\subset_{2\gamma}\tilde{B}\otimes\mathbb M_n$. The estimates given in Proposition \ref{Prelim.Estimates} can then be used to make the necessary choices.   

\begin{remark}
The map $\Phi_0$ also preserves the order structure of $K_0$.  Write $K_0(A)^+$ for the positive cone in $K_0(A)$ which consists of the classes $[p]_0$ in $K_0(A)$ corresponding to projections $p$ in $\mathbb M_n(A)$ for some $n$ and write $\Sigma(A)$ for the \emph{scale} in $K_0(A)$ which consists of the classes $[p]_0$ in $K_0(A)$ corresponding to projections in $A$.  Then, provided the $\gamma$ of the previous theorem satisfies $(2+\sqrt{2})\gamma<1$, it follows that $\Phi_0$ has $\Phi_0(K_0(A)^+)\subseteq K_0(B)^+$ and $\Phi_0(\Sigma(A))\subseteq \Sigma(B)$.  For every projection $p$ in some $\mathbb M_n(A)$, there is a projection $q_0\in\mathbb M_n(B)$ with $\|p-q_0\|\leq 2\gamma$ by  Proposition \ref{Prelim.Estimates} (i).  By definition $\Phi_0([p]_0)=[q]_0$, where $q$ is a projection in $\mathbb M_n(\tilde{B})$ with $\|p-q_0\|<\sqrt{2}\gamma$. The condition on $\gamma$ ensures that $\|q-q_0\|<1$ so $[q]_0=[q_0]_0\in K_0(B)^+$. 
\end{remark}

Combining Khoshkam's work with our analysis in section \ref{Length} gives the following general result, showing that sufficiently close algebras have isomorphic (ordered) $K$-theories provided one algebra has finite length.

\begin{corollary}\label{K.Sim}
Let $A$ and $B$ be C$^*$-subalgebras of a C$^*$-algebra $C$.  Suppose that $A$ has length at most $\ell$ and length constant at most $K$. Suppose further that $d(A,B)=\gamma$ for some $\gamma$ satisfying (\ref{Length.DH.1}) and such that the $\mu$ of Corollary \ref{Length.Cor} satisfies $\mu<1/(2+\sqrt{2})$.  Then $\Phi_*:K_*(A)\rightarrow K_*(B)$ is an isomorphism preserving the order structure and scale on $K_0$.
\end{corollary}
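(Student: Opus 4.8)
The plan is to combine the perturbation result for $K$-theory (Theorem \ref{Trace.K} and the subsequent remark on order structure) with the tensor-product estimates of Corollary \ref{Length.Cor}. The statement of Theorem \ref{Trace.K} requires uniform near-inclusions $A\otimes\mathbb M_n\subset_\gamma B\otimes\mathbb M_n$ and $B\otimes\mathbb M_n\subset_{\gamma'}A\otimes\mathbb M_n$ holding for all $n$ with constants bounded independently of $n$. The key point supplied by Section \ref{Length} is precisely that, once $A$ has finite length, such uniform bounds are available: taking $E=\mathbb M_n$ in Corollary \ref{Length.Cor} (each $\mathbb M_n$ is nuclear) produces the near inclusion $B\otimes\mathbb M_n\subset_\mu A\otimes\mathbb M_n$ with $\mu$ independent of $n$.

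First I would record the two uniform near inclusions. Since $d(A,B)=\gamma$ satisfies \eqref{Length.DH.1}, Corollary \ref{Length.Cor} gives $B\otimes\mathbb M_n\subset_\mu A\otimes\mathbb M_n$ inside $C\otimes\mathbb M_n$ for every $n$, where $\mu$ is the constant stated there. For the reverse inclusion, the elementary estimate $A\subseteq_\gamma B$ (from $d(A,B)=\gamma$, via Proposition \ref{Prelim.MetricNear}(ii)) already lifts directly to $A\otimes\mathbb M_n\subseteq_\gamma B\otimes\mathbb M_n$ by applying the near inclusion entrywise to the matrix coordinates, with no appeal to finite length and with the same constant $\gamma$ for all $n$. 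Thus both hypotheses of the ``if, in addition'' clause of Theorem \ref{Trace.K} hold uniformly in $n$, with the two constants being $\gamma$ and $\mu$ respectively.

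Next I would check the numerical hypotheses. Theorem \ref{Trace.K} asks that both constants be at most $1/3$, and the order-preservation remark requires the controlling constant to satisfy $(2+\sqrt2)\,\cdot(\text{const})<1$, i.e. const $<1/(2+\sqrt2)$. The hypothesis $\mu<1/(2+\sqrt2)$ in the statement supplies exactly this for the inclusion $B\otimes\mathbb M_n\subset_\mu A\otimes\mathbb M_n$; note $1/(2+\sqrt2)<1/3$, so the cruder bound of Theorem \ref{Trace.K} is automatically met as well. For the reverse inclusion one uses that $\gamma$ is extremely small (the inequalities \eqref{Length.DH.1} force $\gamma$ to be tiny, well below $1/(2+\sqrt2)$), so that both the isomorphism and the order-preservation conclusions apply symmetrically. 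Theorem \ref{Trace.K} then yields isomorphisms $\Phi_0\colon K_0(A)\to K_0(B)$ and $\Phi_1\colon K_1(A)\to K_1(B)$, and the remark following it gives $\Phi_0(K_0(A)^+)\subseteq K_0(B)^+$ and $\Phi_0(\Sigma(A))\subseteq\Sigma(B)$.

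The only genuinely substantive point — and the one I expect to require the most care — is verifying that the order and scale are preserved in \emph{both} directions, so that $\Phi_0$ is an order isomorphism rather than merely an order-preserving bijection. Running the order remark with the roles of $A$ and $B$ exchanged gives $\Phi_0^{-1}(K_0(B)^+)\subseteq K_0(A)^+$ and $\Phi_0^{-1}(\Sigma(B))\subseteq\Sigma(A)$, provided the controlling constant in that direction also lies below $1/(2+\sqrt2)$; since that direction is governed by $\gamma\le\mu$, the hypothesis $\mu<1/(2+\sqrt2)$ covers it. Combining the two containments shows $\Phi_0$ maps $K_0(A)^+$ onto $K_0(B)^+$ and $\Sigma(A)$ onto $\Sigma(B)$, completing the proof that $\Phi_*$ is an isomorphism of ordered, scaled $K$-theory.
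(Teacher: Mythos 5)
Your overall architecture is the intended one (the paper gives no separate proof of Corollary \ref{K.Sim}; it is precisely the combination of Theorem \ref{Trace.K}, the remark on order structure following it, and Corollary \ref{Length.Cor}), and your handling of the direction $B\otimes\mathbb M_n\subset_\mu A\otimes\mathbb M_n$ and of the numerical constants is fine. However, there is a genuine error in your treatment of the other direction. You claim that $A\subseteq_\gamma B$ ``lifts directly to $A\otimes\mathbb M_n\subseteq_\gamma B\otimes\mathbb M_n$ by applying the near inclusion entrywise to the matrix coordinates, with no appeal to finite length and with the same constant $\gamma$ for all $n$.'' This is false: if $x=(x_{ij})$ lies in the unit ball of $\mathbb M_n(A)$ and each entry is approximated to within $\gamma$ by an element of $B$, the resulting matrix $y$ satisfies only $\|x-y\|\leq n\gamma$ in general (an error matrix all of whose entries have norm $\gamma$ can have operator norm $n\gamma$). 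The uniformity in $n$ of matrix amplifications of near inclusions is exactly the nontrivial point of this circle of ideas --- if the entrywise argument worked with a constant independent of $n$, Khoshkam's nuclearity hypothesis and this paper's finite-length hypothesis would both be superfluous. The entrywise device is valid only for \emph{diagonal} matrices, whose norm is the supremum of the entry norms; that is precisely how Proposition \ref{Length.Matrix} uses it, applying the near inclusion to the diagonal factors $d_i$ of the length factorisation and then reassembling.

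The gap is easily repaired. Since $A$ has length at most $\ell$ with length constant at most $K$, Proposition \ref{Length.Matrix} gives $A\otimes\mathbb M_n\subseteq_{\mu_0}B\otimes\mathbb M_n$ for all $n$ with $\mu_0=K((1+\gamma)^\ell-1)$, and under the hypotheses one has $\mu_0\leq\mu$ (this is implicit in the ``in particular'' clause $d(A\otimes E,B\otimes E)<2\mu$ of Corollary \ref{Length.Cor}), so $\mu_0<1/(2+\sqrt 2)<1/3$ and the hypotheses of Theorem \ref{Trace.K} and of the order-preservation remark are met in each direction. With that substitution the rest of your argument, including the two-sided application of the remark to upgrade order preservation to an isomorphism of the ordered, scaled $K_0$, goes through.
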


In the finite case $K$-theory alone is not sufficient to classify a large class of simple separable nuclear C$^*$-algebras and so the Elliott invariant has been expanded to include tracial information. In the (finite) non-unital case, the Elliott invariant consists of the data
\begin{equation}
((K_0(A),K_0(A)^+,\Sigma(A)),K_1(A),T(A),\rho_A),
\end{equation}
where $T(A)$ is the cone of positive tracial functionals on $A$ and $\rho_A$ the natural pairing $K_0(A)\times T(A)\rightarrow \mathbb R$ given by extending $([p]_0,\tau)\mapsto (\tau\otimes \mathrm{tr}_n)(p)$, when $p$ is a projection in $A\otimes\mathbb M_n$ and $\mathrm{tr}_n$ is the unique trace on $\mathbb M_n$ with $\mathrm{tr}_n(I_{\mathbb M_n})=n$.  We refer to \cite{Rordam.ClassificationBook} for a discussion of these invariants and an account of the classification programme.

In the rest of this section our objective is to examine traces on close C$^*$-algebras.   Suppose we are given a near inclusion $A\subset_\gamma B$ of unital C$^*$-algebras which share the same unit and a tracial state $\tau$ on $B$.  This induces a state $K_0(\tau)$ on $K_0(B)$. If $A$ has finite length and $\gamma$ is sufficiently small, then we can obtain a state $K_0(\tau)\circ\Phi_0$ on $K_0(A)$ by composing with Khoshkam's map $\Phi_0:K_0(A)\rightarrow K_0(B)$ of Theorem \ref{Trace.K}.  By Theorem 3.3 of \cite{Blackadar.ExtendingStates}, $K_0(\tau)\circ\Phi_0$ arises from a quasitrace $A$.  If $A$ is additionally assumed to be exact, then Haagerup's result \cite{Haagerup.Quasitrace} shows that this quasitrace is actually a trace on $A$.  In this way we obtain a map from the tracial states on $B$ into those on $A$. We prefer a more direct approach passing through the bidual, which gives an isomorphism between the trace states of close C$^*$-algebras without assuming exactness.  

\begin{lemma}\label{KTrace.Lemma}
Suppose that $A$ and $B$ are C$^*$-subalgebras of some C$^*$-algebra $C$ such that $d(A,B)=\gamma$ for some $\gamma<1/2200$.  Then there exists an affine isomorphism $\Psi:T(B)\rightarrow T(A)$. Furthermore, given $n\in\mathbb N$ and projections $p\in A\otimes\mathbb M_n$ and $q\in B\otimes\mathbb M_n$ with $\|p-q\|<1/2-10\gamma$, then 
\begin{equation}\label{Trace.1}
(\Psi(\tau)\otimes\mathrm{tr}_n)(p)=(\tau\otimes\mathrm{tr}_n)(q),\quad \tau\in T(B).
\end{equation}
\end{lemma}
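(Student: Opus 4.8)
The plan is to pass to the biduals, identify tracial functionals with normal states on a common centre, and then reduce the matrix statement to a comparison of centre-valued traces of close projections.

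First I would construct $\Psi$. The inclusions $A,B\subseteq C$ induce normal faithful embeddings $A^{**},B^{**}\hookrightarrow C^{**}$, and Lemma 5 of \cite{Kadison.Kastler} gives $d(A^{**},B^{**})\le\gamma$. Every positive tracial functional on $A$ extends uniquely to a normal one on $A^{**}$, and since a finite trace annihilates the properly infinite summand, $T(A)$ is carried affinely and bijectively onto the normal states on the centre of the finite part, via the centre-valued trace $\ct$. Using Lemma \ref{Centre} I would conjugate $A^{**}$ by a unitary $u$ with $\|u-I\|\le 5\gamma$ so that the finite parts $M:=u(A^{**})_{\mathrm{fin}}u^*$ and $N:=(B^{**})_{\mathrm{fin}}$ acquire a common centre $Z$; Lemma \ref{Decomposition} then shows their type decompositions are cut by the same central projections. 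Matching the normal states on $Z$ on the two sides yields the affine isomorphism $\Psi\colon T(B)\to T(A)$, characterised by the requirement that $\Psi(\tau)$ and $\tau$ induce the \emph{same} central state $\phi$ on $Z$, i.e. $\sigma:=\overline{\Psi(\tau)}\circ\mathrm{Ad}(u^*)=\phi\circ\ct_M$ and $\bar\tau=\phi\circ\ct_N$ on the respective finite parts.

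For the second assertion, set $p_1=(u\otimes I_n)p(u^*\otimes I_n)$, so that $\|p_1-p\|\le 2\|u-I\|\le 10\gamma$ and hence $\|p_1-q\|<1/2$ by hypothesis. Invariance of $\overline{\Psi(\tau)}\otimes\tr_n$ under $\mathrm{Ad}(u\otimes I_n)$ reduces the claim to $(\sigma\otimes\tr_n)(zp_1)=(\bar\tau\otimes\tr_n)(zq)$, where $z$ is the common central projection cutting out the finite part (the traces kill the rest). Since the scalar matrix units $1\otimes e_{ij}$ lie in both $M\otimes\mathbb M_n$ and $N\otimes\mathbb M_n$, the centre of each amplified finite algebra is again $Z$, and the displayed equality is the assertion, after pairing with $\phi$, that $\ct_{M\otimes\mathbb M_n}(zp_1)=\ct_{N\otimes\mathbb M_n}(zq)$ — the matrix analogue of Lemma \ref{CentralTrace}. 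I would deduce it by producing a \emph{finite} von Neumann algebra $R$ with $M,N\subseteq R$ and $Z\subseteq Z(R)$, the natural candidate being $R=(M\cup N)''$: a normal trace $\omega$ on $R$ with $\omega|_Z=\phi$ automatically satisfies $\omega|_M=\sigma$ and $\omega|_N=\bar\tau$, by uniqueness of the trace with a prescribed central restriction, while $\|p_1-q\|<1$ makes $zp_1$ and $zq$ unitarily equivalent in the finite algebra $R\otimes\mathbb M_n$ (Proposition \ref{Prelim.Estimates}); applying $\omega\otimes\tr_n$ then gives $(\sigma\otimes\tr_n)(zp_1)=(\omega\otimes\tr_n)(zp_1)=(\omega\otimes\tr_n)(zq)=(\bar\tau\otimes\tr_n)(zq)$, as required.

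The main obstacle is the step hidden in that last sentence. One cannot simply amplify Lemma \ref{CentralTrace}, since without finite length a near inclusion $M\subseteq_\gamma N$ lifts only to $M\otimes\mathbb M_n\subseteq_{n\gamma}N\otimes\mathbb M_n$, so the hypotheses of Lemma \ref{CentralTrace} fail once $n$ is large; the whole argument therefore rests on showing that the algebra generated by the two close finite algebras $M$ and $N$ is again finite. This is precisely where the smallness of $\gamma$ and the common centre must be used decisively, presumably through the centre-valued trace and the standard-position estimates underlying Lemmas \ref{CentralTrace} and \ref{CouplingConstant}. An alternative that avoids finiteness of $R$ is to prove the matrix version of Lemma \ref{CentralTrace} directly, exploiting that the matrix directions are carried by the common scalar units $1\otimes e_{ij}$ while the only infinite-dimensional data sit in $M$ and $N$: any finite-dimensional subalgebra has length one, so Proposition \ref{Length.Matrix} amplifies the relevant near inclusions with a constant independent of $n$. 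Arranging the finite-dimensional approximants in the type $\mathrm{II}_1$ argument of Lemma \ref{CentralTrace} so that this decoupling applies is the delicate point.
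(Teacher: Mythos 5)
Your construction of $\Psi$ and your reduction of \eqref{Trace.1} to an equality of centre-valued traces of close projections in the matrix amplifications coincide with the paper's argument: pass to the universal representation, apply Lemma \ref{Centre} and Lemma \ref{Decomposition} to obtain a common centre and a common finite central summand $z_{\mathrm{fin}}$, and factor every tracial functional through the centre-valued trace of the finite part. Where you part company with the paper is the final step. The paper concludes by applying Lemma \ref{CentralTrace} to the pair $upu^*\in M_1z_{\mathrm{fin}}\otimes\mathbb M_n$ and $q\in Nz_{\mathrm{fin}}\otimes\mathbb M_n$ on the strength of $d(M_1,N)\le 11\gamma<1/200$ and $\|upu^*-q\|<1/2$ alone; that is, it invokes the central-trace comparison at the level of matrix amplifications while verifying its hypotheses only for the underlying algebras. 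You decline to do this and instead route the argument through a normal tracial functional $\omega$ on $R=(M\cup N)''$ restricting to $\phi$ on $Z$.

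That substitute is where your proposal breaks down. Such an $\omega$ exists for every normal state $\phi$ on $Zz_{\mathrm{fin}}$ only if $Rz_{\mathrm{fin}}$ is finite, and finiteness of the von Neumann algebra generated by two close finite von Neumann algebras is not known, is established nowhere in this paper, and is of essentially the same depth as the problems the paper is built to circumvent; indeed Lemmas \ref{CouplingConstant} and \ref{Length.II1Case} go to considerable lengths precisely because no common finite ambient algebra is available. Your fallback --- proving the matrix form of Lemma \ref{CentralTrace} directly, using that finite-dimensional algebras have length one so that Proposition \ref{Length.Matrix} amplifies their near inclusions with constants independent of $n$ --- is the viable repair and is what the paper's one-line citation tacitly presupposes; but, as you yourself observe, the finite-dimensional algebra $Q$ built in the type $\mathrm{II}_1$ part of the proof of Lemma \ref{CentralTrace} sits inside $M\otimes\mathbb M_n$ without being of the form $Q_0\otimes\mathbb M_n$, so Proposition \ref{Length.Matrix} does not apply to it as it stands, and you leave this ``delicate point'' unresolved. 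As written, therefore, the proposal reproduces the correct skeleton but does not prove the one identity on which everything rests, namely $\ct_{M_1z_{\mathrm{fin}}\otimes\mathbb M_n}(upu^*)=\ct_{Nz_{\mathrm{fin}}\otimes\mathbb M_n}(q)$.
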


\begin{proof}
Working in the universal representation of $C$, the weak closures $M$ and $N$ of $A$ and $B$ are isometrically isomorphic to $A^{**}$ and $B^{**}$ respectively. Lemma 5 of \cite{Kadison.Kastler} gives $d(M,N)\leq d(A,B)$.  By Lemma \ref{Centre}, there exists a unitary $u\in (Z(M)\cup Z(N))''$ such that $Z(uMu^*)=Z(N)$ and $\|u-I\|\leq 5\gamma$. Write $A_1=uAu^*$ and $M_1=uMu^*$ so that $d(M_1,N)\leq 11\gamma$.  Since $11\gamma<1/10$, Lemma \ref{Decomposition} applies.  In particular, there is a projection $z_{\text{fin}}$ in $Z(M_1)=Z(N)$ such that $Mz_{\text{fin}}$ and $Nz_{\text{fin}}$ are both finite while $M(I-z_{\text{fin}})$ and $N(I-z_{\text{fin}})$ are both purely infinite.

Now take a positive linear tracial functional $\tau$ on $B$. There is a unique extension $\tau^{**}$ to a normal positive linear tracial functional on $N$. This must factor through the finite part of $N$ and the centre valued trace on this algebra (see \cite[8.2]{KR.2}).  It follows that there is a unique positive normal functional $\phi_\tau$ on $Z(N)z_{\text{fin}}$ such that
\begin{equation}
\tau^{**}(x)=\phi_\tau(\ct_{Nz_{\text{fin}}}(x)),\quad x\in N,
\end{equation}
where $\ct_{Nz_{\text{fin}}}$ is the centre valued trace on $Nz_{\text{fin}}$. Then $\phi_\tau\circ\ct_{M_1z_{\text{fin}}}$ defines a normal positive tracial functional on $M_1$. Define a positive linear functional $\tau_1$ on $A_1$ by restricting this functional to $A_1$, i.e.
\begin{equation}
\tau_1=(\phi_\tau\circ\ct_{M_1z_{\text{fin}}})|_{A_1}.
\end{equation}
Let $\Psi(\tau):A\rightarrow\mathbb C$ be given by $\Psi(\tau)(x)=\tau_1(uxu^*)$ so $\Psi(\tau)$ is a positive tracial functional on $A$.  
The map $\Psi$ is evidently affine. Since every positive tracial functional on $A_1$ extends uniquely to $M_1$, where it factors through $M_1z_{\text{fin}}$ and $\ct_{M_1z_{\text{fin}}}$ the map $\Psi$ is onto and so an affine isomorphism between the positive tracial functionals on $B$ and those on $A$.

We now establish (\ref{Trace.1}). Fix $n\in\mathbb N$ and projections $p\in A\otimes\mathbb M_n$ and $q\in B\otimes\mathbb M_n$ with $\|p-q\|<1/2-10\gamma$.  Note that $\tau\otimes\tr_n$ is the restriction of $(\phi_\tau\otimes\tr_n)\circ\ct_{Nz_{\text{fin}}\otimes\mathbb M_n}$ to $B\otimes\mathbb M_n$, while $\tau_1$ gives rise to $\tau_1\otimes\tr_n$ on $A_1\otimes\mathbb M_n$ which is given by restricting $((\phi_\tau\otimes\tr_n)\circ\ct_{M_1z_{\text{fin}}\otimes\mathbb M_n})$ to $A_1\otimes\mathbb M_n$. Then 
\begin{equation}
\|(u\otimes I_{\mathbb M_n})p(u\otimes I_{\mathbb M_n})^*-q\|\leq 2\|u-I\|+\|p-q\|<1/2.
\end{equation}
As $d(M_1,N)\leq 11\gamma<1/200$, Lemma \ref{CentralTrace} applies and so
\begin{equation}
\ct_{M_1z_{\text{fin}}\otimes\mathbb M_n}(upu^*)=\ct_{Nz_{\text{fin}}\otimes\mathbb M_n}(q).
\end{equation}
Thus $(\tau\otimes\tr_n)(q)=(\tau_1\otimes\tr_n)(upu^*)=(\Psi(\tau)\otimes\tr_n)(p)$.
\end{proof}

Combining the previous lemma with the results of Section \ref{Length}, it follows that sufficiently close C$^*$-algebras have the same Elliott invariant when one has finite length.  
\begin{theorem}\label{KTrace.Main}
Suppose that $A$ and $B$ are C$^*$-subalgebras of some C$^*$-algebra $C$. Write $d(A,B)=\gamma$ and suppose that $A$ has length at most $\ell$ with length constant at most $K$.  Suppose $\gamma$ satisfies the inequalities (\ref{Length.DH.1}) and the $\mu$ of Corollary \ref{Length.Cor} satisfies $\mu<1/(2+\sqrt{2})$. Then there exist isomorphisms $\Phi_*:K_*(A)\rightarrow K_*(B)$ between the ordered $K$-theories of $A$ and $B$, which preserve the scale and an affine isomorphism $\Psi:T(B)\rightarrow T(A)$ such that \begin{equation}
\rho_A(x,\Psi(\tau))=\rho_B(\Phi_0(x),\tau)),\quad x\in K_0(A),\ \tau\in T(B).
\end{equation}
\end{theorem}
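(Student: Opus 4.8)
The plan is to assemble the two ingredients already established and then reconcile them on the pairing. The ordered isomorphisms $\Phi_*\colon K_*(A)\to K_*(B)$, preserving the scale on $K_0$, are produced directly by Corollary \ref{K.Sim}, whose hypotheses are precisely those assumed here. The affine isomorphism $\Psi\colon T(B)\to T(A)$ is produced by Lemma \ref{KTrace.Lemma}; its only hypothesis is $\gamma<1/2200$, which is forced by the first inequality of (\ref{Length.DH.1}) since the coefficient $24(12\sqrt2\,k+4k+1)$ there exceeds $1$. Thus the substance of the theorem is the compatibility identity
\begin{equation*}
\rho_A(x,\Psi(\tau))=\rho_B(\Phi_0(x),\tau),\qquad x\in K_0(A),\ \tau\in T(B).
\end{equation*}
Both sides are additive group homomorphisms in $x$ (using that $\Phi_0$ is a homomorphism and that $\rho_A,\rho_B$ are bi-additive pairings), so it suffices to verify the identity on the generators $[p]_0$ of $K_0(A)$ coming from honest projections $p$ in some $A\otimes\mathbb M_n$.

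Fix such a projection $p\in A\otimes\mathbb M_n$ and a trace $\tau\in T(B)$. Since $A$ has length at most $\ell$ with length constant at most $K$, Proposition \ref{Length.Matrix} lifts the near inclusion $A\subseteq_\gamma B$ to $A\otimes\mathbb M_n\subseteq_{\mu_0}B\otimes\mathbb M_n$, uniformly in $n$, where $\mu_0=K((1+\gamma)^\ell-1)$. Applying Proposition \ref{Prelim.Estimates}(\ref{Prelim.Estimates.1}) to this near inclusion produces a \emph{genuine} projection $q\in B\otimes\mathbb M_n$ with $\|p-q\|<2\mu_0$. Since $\|p-q\|$ and the distance from $p$ to Khoshkam's chosen representative of $\Phi_0([p]_0)$ are both far below $1/2$ under (\ref{Length.DH.1}), these two projections in $B\otimes\mathbb M_n$ differ by less than $1$ in norm, hence are Murray--von Neumann equivalent; thus $[q]_0=\Phi_0([p]_0)$. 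With this choice the left-hand side equals $(\Psi(\tau)\otimes\mathrm{tr}_n)(p)$ and the right-hand side equals $(\tau\otimes\mathrm{tr}_n)(q)$, both by the definition of the pairings on honest projections. Provided $\|p-q\|<1/2-10\gamma$, equation (\ref{Trace.1}) of Lemma \ref{KTrace.Lemma} identifies these two numbers, which completes the verification.

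The one point needing care --- and the place where the precise constants enter --- is that the representative $q$ must simultaneously represent $\Phi_0([p]_0)$ \emph{and} meet the tight tolerance $\|p-q\|<1/2-10\gamma$ demanded by Lemma \ref{KTrace.Lemma}. For small $\gamma$ one has $\mu_0\approx K\ell\gamma=2k\gamma$, so $2\mu_0\approx 4k\gamma$; and since $4k\le 12\sqrt2\,k+4k+1$, the first inequality of (\ref{Length.DH.1}) gives $24\cdot 4k\gamma\le 24(12\sqrt2\,k+4k+1)\gamma<1/2200$, whence $2\mu_0+10\gamma<1/2$ comfortably. Thus the projection $q$ produced above can indeed be taken within the required tolerance, and the identity reduces to bookkeeping already encoded in the hypotheses (\ref{Length.DH.1}). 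I expect no conceptual obstacle beyond keeping these estimates consistent, together with the routine check that the honest-projection classes $[p]_0$ generate $K_0(A)$, which legitimises the additive reduction of the first paragraph.
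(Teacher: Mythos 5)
Your proposal is correct and is essentially the argument the paper intends: Theorem \ref{KTrace.Main} is presented there as an immediate combination of Corollary \ref{K.Sim} (for $\Phi_*$) and Lemma \ref{KTrace.Lemma} (for $\Psi$ and, via equation (\ref{Trace.1}), the compatibility of the pairings on projection classes), and your constant-checking that a single projection $q\in B\otimes\mathbb M_n$ can serve both as Khoshkam's representative of $\Phi_0([p]_0)$ and as the comparison projection within the tolerance $\|p-q\|<1/2-10\gamma$ is exactly the bookkeeping the paper leaves implicit. The only caveat is your closing remark that classes of projections in $\mathbb M_n(A)$ generate $K_0(A)$ --- false for general non-unital $A$ --- but this is immaterial here since $\rho_A$ is itself defined only by extension from such classes (and, if one insists on all of $K_0(A)$, the same argument runs verbatim for projections over $\tilde A$ and $\tilde B$ using the near inclusion of the unitizations noted after Theorem \ref{Trace.K}).
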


\section{Kirchberg algebras and real rank zero}\label{Kirchberg}

A Kirchberg C$^*$-algebra is defined by the properties of being nuclear, purely infinite, simple, and separable. One of the crowning achievements of Elliott's classification programme is the theorem of Kirchberg and C. Phillips which shows that Kirchberg algebras satisfying the UCT are classifiable by their $K$-theory \cite{Kirchberg.Phillips,Kirchberg.ClassificationBook}. In this section we make use of this result to examine perturbation theory for such C$^*$-algebras.  Our  objective is to show that any C$^*$-algebra sufficiently close to a Kirchberg algebra is again a Kirchberg algebra.  By earlier results of Christensen and J. Phillips this amounts to showing that a C$^*$-algebra sufficiently close to a simple separable purely infinite algebra is itself purely infinite. This result can be established directly, but we prefer to use a characterisation due to Zhang \cite{Zhang.PurelyInfinite}. He shows that a simple C$^*$-algebra is purely infinite if and only if it is real rank zero and every non-zero projection is infinite.  We will show that these two properties transfer to sufficiently close algebras. This has the advantage of additionally establishing a perturbation result for the property of being real rank zero, which is also of importance in the classification programme for finite C$^*$-algebras. We begin with the second of the two properties in Zhang's characterisation above.

\begin{lemma}\label{Kirchberg.Infinite}
Let $A$ and $B$ be C$^*$-subalgebras of a C$^*$-algebra $C$ with $d(A,B)<1/14$.  If every non-zero projection in $A$ is infinite, then every non-zero projection in $B$ is infinite.
\end{lemma}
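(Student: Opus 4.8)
The plan is to exploit the hypothesis to produce a non-unitary isometry in a corner of $A$, transport it approximately into the matching corner of $B$, and then manufacture a genuine non-unitary isometry there. Represent $C$ faithfully on a Hilbert space $\mathcal H$, so that all the elements below act on $\mathcal H$. Fix a non-zero projection $q\in B$. Since $d(A,B)<1/14<1/2$, Proposition \ref{Prelim.MetricNear}(ii) gives $B\subseteq_{d(A,B)}A$, so Proposition \ref{Prelim.Estimates}(i) yields a projection $p\in A$ with $\|p-q\|<2d(A,B)<1/7$; as $\|q\|=1$ this forces $p\neq 0$, whence $p$ is infinite by hypothesis. Thus there is a partial isometry $v\in pAp$ with $v^*v=p$ and $vv^*=p'\lneq p$; put $r=p-p'$, a non-zero projection with $r\le p$ and $rp'=0$. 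Finally choose $b$ in the unit ball of $B$ with $\|v-b\|<1/14$.

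Next I would convert $b$ into an honest partial isometry inside the corner $qBq$. Set $c=qbq\in qBq$. A short computation, using $v=pvp$ and $v^*v=p$ (so that $v^*pv=p$), gives $\|c^*c-q\|\le 2\|v-b\|+2\|p-q\|<3/7<1$, so $c^*c$ is invertible in the unital algebra $qBq$ and $w:=c(c^*c)^{-1/2}$ lies in $qBq$ with $w^*w=q$. Since $ww^*$ is then a projection in $qBq$ it automatically satisfies $ww^*\le q$, and $w$ implements the equivalence $q\sim ww^*$ in $B$. It therefore suffices to prove the strict inequality $ww^*\neq q$: this exhibits $ww^*$ as a proper subprojection of $q$ that is equivalent to $q$, so $q$ is infinite.

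This strict inequality is the crux, and I expect it to be the main obstacle: at the distance $1/14$ the isometry $w$ is too far from $v$ (the error is of order $1$) for the naive comparison $\|ww^*-p'\|<1$ to be available, so the defect $r$ cannot be detected by comparing range projections. Instead I would prove $ww^*\neq q$ by showing that $c$ is \emph{not} invertible in $qBq$, testing against the transported defect. Writing $\gamma=d(A,B)$, if $ww^*=q$ then $c^*c\ge(1-6\gamma)q$ forces $cc^*=w(c^*c)w^*\ge(1-6\gamma)q$. On the other hand, choose a unit vector $\zeta$ in the range of $r$; then $p\zeta=\zeta$ and $p'\zeta=0$, so $\eta:=q\zeta/\|q\zeta\|$ is a unit vector in the range of $q$ with $\|q\zeta\|>6/7$ and hence $\|p'\eta\|<1/6$. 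Since $\|cc^*-qp'q\|\le 2\|v-b\|+\|p-q\|<4/14$, one gets $\langle cc^*\eta,\eta\rangle\le\|p'\eta\|^2+4/14<1/36+2/7$, which is strictly smaller than $1-6\gamma>4/7$. This contradicts the lower bound, so $ww^*\neq q$ and $q$ is infinite. The numerology closes with room to spare: the binding requirement is essentially $10\gamma<1$, comfortably implied by $\gamma<1/14$.
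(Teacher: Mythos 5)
Your proof is correct, and it shares the paper's overall skeleton --- pull back the projection to $A$, take the partial isometry $v$ witnessing infiniteness, approximate it in $B$, and repair the approximant to a genuine isometry of the corner by polar decomposition --- but you handle the crux (properness of the resulting isometry) by a genuinely different device. The paper unitizes: it forms $y=b+(I-p)$ and $x=v+(I-q)$, notes $x$ is a non-invertible isometry, deduces from $\|y-x\|<7\gamma$ that $y$ cannot be invertible (else $\|I-y^{-1}x\|<1$ would make $x$ invertible), and then cuts the resulting non-unitary isometry $w_0=y|y|^{-1}$ back down to the corner. You instead stay inside $qBq$ throughout and detect the defect quantitatively: a unit vector in the range of $r=p-p'$ transports to a vector $\eta\in q\mathcal H$ on which $cc^*$ is provably too small to be bounded below by $1-6\gamma$, contradicting $ww^*=q$. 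Your estimates check out ($\|c^*c-q\|\le 2\|v-b\|+2\|p-q\|<3/7$, $\|cc^*-qp'q\|<2/7$, $\|p'\eta\|<1/6$, and $1/36+2/7<4/7$), so the argument closes with room to spare at $\gamma<1/14$. What each approach buys: the paper's invertibility trick is softer --- it never needs to locate or measure the defect projection, only to know that a non-unitary isometry cannot be invertible --- whereas your test-vector computation is entirely local to the corner, avoids the unitization, and makes the failure of surjectivity explicit; the price is more bookkeeping. One cosmetic point: $d(A,B)$ is an infimum, so you should fix $\gamma$ with $d(A,B)<\gamma<1/14$ and write $\|p-q\|<2\gamma$ and $\|v-b\|<\gamma$ rather than bounds in terms of $d(A,B)$ itself (the paper does exactly this); your numerical slack absorbs the change.
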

\begin{proof}
Take $\gamma>0$ with $d(A,B)<\gamma<1/14$. Given a non-zero projection $p$ in $B$, use   Proposition \ref{Prelim.Estimates} (i) to find a projection $q\in A$ with $\|p-q\|<2\gamma$ so that $q$ is non-zero.  By hypothesis $q$ is infinite so there exists a partial isometry $v\in A$ with $vv^*<q$ and $v^*v=q$.  Take an operator $b_0$ in the unit ball of $B$ with $\|b_0-v\|<\gamma$ and define $b=pb_0p$. Then 
\begin{align}
\|v-b\|\leq&\|qvq-pvq\|+\|pvq-pb_0q\|+\|pb_0q-pb_0p\|\notag\\
\leq&\|p-q\|+\|v-b_0\|+\|p-q\|\leq 5\gamma.
\end{align}
Now represent $C$ on a Hilbert space $\mathcal H$. Let $y=b+(I-p)$ and $x=v+(I-q)$ so that $\|y-x\|\leq \|v-b\|+\|p-q\|<7\gamma$.  Since $x$ is an isometry, we have 
\begin{equation}\label{eqgamma}
(1-7\gamma)\|\xi\|\leq \|y\xi\|=\||y|\xi\|,\quad \xi\in \mathcal H
\end{equation}
and so $|y|\geq (1-7\gamma)I$. As $\gamma<1/14$, the operator $|y|$ is invertible in $C^*(B,I)$. Thus, in the polar decomposition $y=w_0|y|$, the partial isometry $w_0$ lies in $C^*(B,I)$.  If $y$ were invertible then we would have $\|y^{-1}\|\leq (1-7\gamma)^{-1}$ from \eqref{eqgamma}. Then
\begin{equation}
\|I_{\mathcal H}-y^{-1}x\|=\|y^{-1}(y-x)\|<7\gamma/(1-7\gamma)<1,
\end{equation}
since $\gamma< 1/14$. Thus $y^{-1}x$ is invertible so $x$ is invertible. This contradiction shows that $y$ is not invertible, and consequently $w_0$ is not a unitary. Now let $w=p w_0 p$. The definition of $y$ implies that
$p$ commutes with $y$, and hence with $|y|$ and
$w_0=y|y|^{-1}$. Since $w_0^*w_0=I_{\mathcal H}$, we see that
$w^*w={w_0}^*w_0p=p$, so that $w$ is a partial isometry. On
the other hand, $ww^*=pw_0w_0^*p\leq p$. If equality held, then $w_0$ would be a unitary, since $(I_{\mathcal H}-p)w_0w_0^*=I_{\mathcal H}-p$ due
to the invertibility of $(I_{\mathcal H}-p)y$ on $(I_{\mathcal H}-p)(\mathcal H)$.
This contradiction shows that $ww^*< p$, and proves that $p$
is
an infinite projection.
\end{proof}

Recall that a C$^*$-algebra $A$ has \emph{real rank zero} if the self-adjoint operators in $A$ of finite spectrum are dense in the self-adjoint operators of $A$.  Equivalently, a C$^*$-algebra $A$ has real rank zero if and only if it has the \emph{hereditary property} that every hereditary C$^*$-subalgebra of $A$ has an approximate unit of projections, see \cite[V.3.29]{Blackadar.OperatorAlgebras}. This latter condition is amenable to perturbation arguments.
\begin{lemma}\label{Kirchberg.Hereditary}
Suppose that $A$ and $B$ are C$^*$-subalgebras of a unital C$^*$-algebra $C$, and let $\gamma$ satisfy  $d(A,B)<\gamma<1/8$. If $A$ has real rank zero, then for all $k\geq 0$ in $B$, there exists a projection $p\in \overline{kB k}$ such that
\begin{equation}
\|k-kp\|\leq 7\gamma\|k\|\leq (7/8)\|k\|.
\end{equation}
\end{lemma}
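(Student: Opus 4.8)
The plan is to exploit the hereditary characterisation of real rank zero directly on the side where it holds, namely in $A$, and then transport the resulting projection across the near inclusion $B\subset_\gamma A$ into the hereditary subalgebra generated by $k$ in $B$. First I would fix $k\geq 0$ in the unit ball of $B$ (rescaling at the end to recover the $\|k\|$ factor) and approximate $k$ by a positive element of $A$: since $d(A,B)<\gamma$, there is $a\in A$ with $\|a\|\leq 1$ and $\|k-a\|<\gamma$, and by replacing $a$ with its positive part (or with $a^*a$ suitably normalised) I may assume $a\geq 0$ with a controlled increase in the approximation constant. Working inside $\overline{aAa}$, the real rank zero hypothesis on $A$ supplies an approximate unit of projections, so I can choose a projection $p_0\in\overline{aAa}\subseteq A$ with $\|a-ap_0\|$ as small as desired; in particular I would drive this below $\gamma$ so that, combined with $\|k-a\|<\gamma$, the element $k$ is nearly fixed by $p_0$.

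Next I would pull $p_0$ back into $B$. Using Proposition \ref{Prelim.Estimates}(\ref{Prelim.Estimates.1}) (valid since $\gamma<1/8<1/2$), the near inclusion $A\subset_\gamma B$ yields a projection $q\in B$ with $\|p_0-q\|<2\gamma$. The key point is to control $\|k-kq\|$: writing $k-kq=(k-kp_0)+k(p_0-q)$, the first term is small from the previous paragraph and the second is bounded by $\|k\|\,\|p_0-q\|<2\gamma$, so I obtain an estimate of the correct order $O(\gamma)\|k\|$. The arithmetic needs to be tracked carefully so that the various contributions sum to at most $7\gamma\|k\|$; the second inequality $7\gamma\|k\|\leq(7/8)\|k\|$ is then immediate from $\gamma<1/8$.

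The genuine obstacle is the final requirement that the projection lie in the hereditary subalgebra $\overline{kBk}$ rather than merely in $B$. The projection $q$ produced above sits in $B$ but need not be compressed into $\overline{kBk}$. I expect to resolve this by a cut-down argument: form $kqk$ or, more robustly, use functional calculus on the positive element $qkq$ (which lies in $\overline{qBq}$) and replace $q$ by a spectral projection of an element of $\overline{kBk}$ that is close to $q$. Concretely, since $\|k-kq\|$ is small, $kq$ is close to $k$, so $kqk$ is close to $k^2$ and is invertible (bounded below) on the relevant subspace; applying continuous functional calculus to an element of $\overline{kBk}$ close to the support of $k$ lets me manufacture a projection $p\in\overline{kBk}$ with $\|p-q\|$ small, which then inherits the estimate $\|k-kp\|\leq 7\gamma\|k\|$ after absorbing the extra error. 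Marshalling these perturbation estimates so that the promised hereditary membership $p\in\overline{kBk}$ holds exactly while the norm bound stays within $7\gamma\|k\|$ is where the care is required; the rest is routine application of the standard closeness estimates from Proposition \ref{Prelim.Estimates} and the triangle inequality.
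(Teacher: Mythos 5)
Your first two paragraphs are fine as far as they go: approximating $k$ by a positive $a\in A$, taking a projection $p_0$ from an approximate unit of projections of $\overline{aAa}$, and pulling it back to a projection $q\in B$ does yield $\|k-kq\|\leq 6\gamma\|k\|$ or so. The genuine gap is exactly where you locate it, and the repair you sketch does not work. Your $q$ gives no lower bound for $k$ on its range: $\|a-ap_0\|$ small says that $p_0$ nearly supports $a$, not that $a$ is bounded away from $0$ on $p_0\mathcal H$ (think of $a=\mathrm{diag}(1,1/2,1/4,\dots)$ with $p_0=I$). Consequently $kqk$ is merely close to $k^2$, an element with no spectral gap in general, so functional calculus on it produces no projection, and the polar-decomposition/range-projection device needs $|kq|$ to be invertible on $q\mathcal H$, which fails. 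Worse, the intermediate goal of finding $p\in\overline{kBk}$ with $\|p-q\|$ small is unattainable in general: take $B=\mathbb B(\mathcal H)$ and $k=\mathrm{diag}(1,1/2,1/3,\dots)$, so that $\overline{kBk}=\mathbb K(\mathcal H)$ and $q=I$ satisfies $\|k-kq\|=0$, yet any projection $p$ with $\|p-I\|<1$ must equal $I\notin\mathbb K(\mathcal H)$. (The conclusion of the lemma still holds there, with $p$ a finite-rank truncation of the identity, but not by approximating $q$.)

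The paper resolves this tension in the opposite way, and the difference is the whole point of the lemma. It uses the finite-spectrum form of real rank zero to pick $h\in A_{\mathrm{s.a.}}$ with $\|h-k\|<\gamma$, takes the spectral projection of $h$ for $[1/2,1]$, and then a projection $r\in B$ within $2\gamma$ of it. This yields the two facts that matter: $\|k(I-r)\|<3\gamma+1/2$ and $\|k\xi\|\geq 1/2-4\gamma>0$ for unit vectors $\xi\in r\mathcal H$. The lower bound makes $t=|kr|$ invertible on $r\mathcal H$, so the range projection $p=vv^*$ of $kr$ is the norm limit of the elements $kr(t+n^{-1}I)^{-2}rk\in kBk$ and hence lies in $\overline{kBk}$, while $(I-p)kr=0$ gives $\|k-kp\|=\|(I-p)k(I-r)\|\leq\|k(I-r)\|<7/8$. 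Note that this deliberately sacrifices the $O(\gamma)$ bound you are chasing: cutting the spectrum at $1/2$ leaves an error of order $1/2$, and it is only the fixed contraction factor $7/8$ that is used (iteratively) in the proof of Theorem \ref{Kirchberg.RealRankZero}. If you want to salvage your outline, replace the approximate-unit projection $p_0$ by a spectral projection of your positive approximant for an interval bounded away from $0$; you then recover exactly the bounded-below mechanism that puts the range projection into $\overline{kBk}$.
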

\begin{proof}
Suppose that $C$ is faithfully represented on $\mathcal H$. 
Let $k\geq 0$ lie in $B$, and assume without loss of
generality that $\|k\|=1$. Then choose $h\in A_{{\text{s.a.}}}$ such that
$\|h\|\leq 1$, $\|h-k\|<\gamma$, and the spectrum of $h$ is
finite (and is contained in $[-\gamma,1]$). Let $q\in A$ be the spectral projection of $h$ for
the interval $[1/2,1]$ and choose, by Proposition \ref{Prelim.Estimates} (i), a projection
$r\in B$ with $\|r-q\|<2\gamma$. Then
\begin{align}
\|k(I-r)\|&\leq \|(k-h)(I-r)\|+\|h(q-r)\|+\|h(I-q)\|\notag\\
&<\gamma+2\gamma+1/2=3\gamma+1/2<7/8.\label{eq6.500}
\end{align}
For a unit vector $\xi\in r\mathcal H$, we have the
inequality
\begin{align}
\|k\xi\|&\geq \|hq\xi\|-\|h(r-q)\xi\|-\|(h-k)\xi\|\notag\\
&\geq \|q\xi\|/2-2\gamma-\gamma\notag\\
&\geq \|r\xi\|/2-4\gamma=1/2-4\gamma>0.
\end{align}
Thus $kr$ is bounded below on $r\mathcal H$, so the operator
$t=|kr|=(rk^2r)^{1/2}$ is invertible on $r\mathcal H$. Let
$kr=vt$ be the polar decomposition, where $v\in \mathbb
B(\mathcal H)$ is a partial isometry. Using the
invertibility of $t$ on $r\mathcal H$, it is easy to check
that $v$ is the norm limit of the sequence
$\{v_n\}_{n=1}^{\infty}$ whose elements are defined by
$v_n=kr(t+n^{-1}I)^{-1}$ for $n\geq 1$. This shows that $v\in
B$, so the range projection $p=vv^*$ of $kr$ also lies in
$B$. Moreover, $p\in \overline{kBk}$ since this algebra
contains each element $v_nv_n^*$, $n\geq 1$. By
construction, $(I-p)kr=0$, so
\begin{equation}
\|k(I-p)\|=\|(I-p)k\|=\|(I-p)k(I-r)\|<7\gamma<7/8,
\end{equation}
from \eqref{eq6.500}.
\end{proof}

\begin{theorem}\label{Kirchberg.RealRankZero}
Let $A$ and $B$ be C$^*$-subalgebras of a C$^*$-algebra $C$ with $d(A,B)< 1/8$. Then $A$ has real rank zero if and only if $B$ has real rank zero.
\end{theorem}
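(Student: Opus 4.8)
The plan is to exploit the symmetry $d(A,B)=d(B,A)$ of the hypothesis: it suffices to prove that if $A$ has real rank zero then so does $B$, the reverse implication following by interchanging the roles of $A$ and $B$. Fix $\gamma$ with $d(A,B)<\gamma<1/8$ and, after replacing $C$ by its unitisation (which changes neither $d(A,B)$ nor the real rank of $B$), assume $C$ is unital so that Lemma \ref{Kirchberg.Hereditary} applies. I would verify real rank zero of $B$ through the hereditary characterisation recalled above, by showing that every hereditary C$^*$-subalgebra $D\subseteq B$ has an approximate unit of projections. The useful starting observation is that for a positive $k\in D$ the projection $p\in\overline{kBk}$ produced by Lemma \ref{Kirchberg.Hereditary} automatically lies in $D$, since $kBk\subseteq D$ forces $\overline{kBk}\subseteq D$.

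The heart of the argument, and the step I expect to be the main obstacle, is that Lemma \ref{Kirchberg.Hereditary} only delivers the fixed fractional estimate $\|k-kp\|\le 7\gamma\|k\|$ with $7\gamma<7/8$, whereas an approximate unit demands this quantity be driven to $0$. I would overcome this by iterating the lemma on successive residuals. Writing $c=7\gamma<1$, start from $k\ge 0$ with $\|k\|=1$, set $g_0=k$, and recursively apply Lemma \ref{Kirchberg.Hereditary} to $g_{n-1}$ to obtain a projection $p_n\in\overline{g_{n-1}Bg_{n-1}}$ with $\|g_{n-1}(1-p_n)\|\le c\|g_{n-1}\|$. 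Since $g_{n-1}=(1-P_{n-1})g_{n-1}(1-P_{n-1})$, where $P_{n-1}=p_1+\dots+p_{n-1}$, the containment $\overline{g_{n-1}Bg_{n-1}}\subseteq(1-P_{n-1})B(1-P_{n-1})\cap\overline{kBk}$ shows that $p_n$ is orthogonal to $P_{n-1}$ and still lies in $\overline{kBk}$. Thus the $p_n$ are mutually orthogonal projections in $\overline{kBk}$, the partial sums $P_n$ are increasing, and
\begin{equation}
g_n:=(1-P_n)k(1-P_n)=(1-p_n)g_{n-1}(1-p_n),\qquad \|g_n\|\le c^{\,n}.
\end{equation}
Because $0\le k\le I$ gives $k^2\le k$, we get $\|k(1-P_n)\|^2=\|(1-P_n)k^2(1-P_n)\|\le\|g_n\|\le c^{\,n}$, so $\|k-kP_n\|\le c^{\,n/2}\to 0$.

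This settles the singly generated case: for each $k\ge 0$ and each $\varepsilon>0$ there is a projection $P\in\overline{kBk}$ with $\|k-kP\|<\varepsilon$. To assemble an approximate unit of projections for an arbitrary hereditary subalgebra $D$, I would apply this to finite sums. Given positive $a_1,\dots,a_m\in D$ and $\varepsilon>0$, put $a=a_1+\dots+a_m\ge 0$ and choose $P\in\overline{aBa}\subseteq D$ with $\|(1-P)a(1-P)\|$ small; then $0\le a_i\le a$ yields $\|(1-P)a_i(1-P)\|\le\|(1-P)a(1-P)\|$, whence each $\|a_i-a_iP\|$ is correspondingly small. Indexing these projections by the finite subsets of $D^{+}$ together with $\varepsilon>0$ produces an approximate unit of projections for $D$. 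By \cite[V.3.29]{Blackadar.OperatorAlgebras} this proves that $B$ has real rank zero, and the symmetric hypothesis completes the equivalence.
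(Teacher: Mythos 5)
Your argument is correct and is essentially the paper's own proof: both verify the hereditary/approximate-unit characterisation of real rank zero by iterating Lemma \ref{Kirchberg.Hereditary} on the successive compressions $(I-P_n)k(I-P_n)$, using that each new projection lives in the hereditary subalgebra generated by the previous residual and is therefore orthogonal to the earlier ones, so that the norms decay geometrically. The only cosmetic differences are that the paper reduces several elements to one via $x=\sum_i x_i^*x_i$ before iterating (you sum positive elements afterwards instead) and that you explicitly unitise $C$ before invoking the lemma.
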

\begin{proof}
Fix a hereditary $C^*$-subalgebra $E$ of $B$.  Given $x_1,\dots,x_n\in E$ and $\varepsilon>0$, we must find a projection $p\in E$ with $\|x_i-x_ip\|<\varepsilon$ for all $i$. As in the proof of \cite[Theorem V.7.3]{Davidson.Example}, by taking $x=\sum_ix_i^*x_i$, we have
\begin{equation}
\|x_i-x_ip\|^2=\|(I-p)x_i^*x_i(I-p)\|\leq\|(I-p)x(I-p)\|.
\end{equation}
Therefore it suffices to consider a single positive element $x\in E$ and find a
projection $p\in E$ with $\|(I-p)x(I-p)\|<\varepsilon$.

Let $E_0=\overline{xB x}$, the hereditary subalgebra of $B$ generated
by $x$ so that $E_0\subseteq E$. Use Lemma \ref{Kirchberg.Hereditary} to find a
projection $p_1\in E_0$ with $\|x-xp_1\|\leq (7/8) \|x\|$ and so
\begin{equation}
\|(I-p_1)x(I-p_1)\|\leq\|x-xp_1\|\leq (7/8) \|x\|.
\end{equation}
The element $(I-p_1)x(I-p_1)$ is a positive element of $E_0$ and so
generates a hereditary subalgebra
$E_1=\overline{(I-p_1)x(I-p_1)B(I-p_1)x(I-p_1)}$ of $E_0$. We can
then use Lemma \ref{Kirchberg.Hereditary} again to find a projection $p_2\in E_1$ with
\begin{equation}
\|(I-p_1)x(I-p_1)(I-p_2)\|<(7/8)\,\|(I-p_1)x(I-p_1)\|< 
 (7/8)^2 \|x\|.
\end{equation}
Since $E_1\subset \overline{(I-p_1)B (I-p_1)}$, it follows that
$p_2\leq 1-p_1$. Thus $(I-p_1)(I-p_2)=I-(p_1+p_2)$ and we have
\begin{equation}
\|(I-(p_1+p_2))x(I-(p_1+p_2))\|< (7/8)^2\|x\|.
\end{equation}
If we continue in this fashion, we will eventually find orthogonal
projections $p_1,\dots,p_n\in E$ such that
\begin{equation}
\|(I-(p_1+\dots+p_n))x(I-(p_1+\dots+p_n))\|< (7/8)^n <\varepsilon,
\end{equation}
exactly as required.  
\end{proof}

We are now in a position to prove the main result of this section using the previous results, work of J. Phillips and work of the first named author.
\begin{theorem}
Let $A$ and $B$ be C$^*$-subalgebras of a C$^*$-algebra $C$ with $d(A,B)<1/101$.  If $A$ is a Kirchberg algebra, then  $B$ is also a Kirchberg algebra.
\end{theorem}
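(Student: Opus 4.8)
The plan is to verify the four defining properties of a Kirchberg algebra---nuclear, separable, simple, and purely infinite---for $B$, assembling them from results established earlier in the paper and a few facts from the literature. Since $d(A,B)<1/101$, the argument of \cite[Theorem 6.5]{Christensen.NearInclusions} (using that $A$ is nuclear iff $A^{**}$ is injective together with \cite[Lemma 5]{Kadison.Kastler}) immediately gives that $B$ is nuclear. Separability of $B$ is routine: a nuclear C$^*$-algebra is separable provided it has a countable dense subset, and closeness to the separable algebra $A$ transfers this, so I would simply remark that $B$ inherits separability from $A$ via the near inclusion.

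Next I would handle simplicity. Here I expect to invoke earlier perturbation results on ideal structure; the natural route is that simplicity is an isomorphism invariant detected by the lattice of ideals, and for nuclear (hence, by Pisier, finite length at most $2$) algebras the machinery of Section \ref{Length} together with Khoshkam's $K$-theoretic comparison shows that sufficiently close algebras share structural features. More directly, since $A$ has length $2$ and length constant $1$, Theorem \ref{Length.DH} and Corollary \ref{Length.Cor} place us in a regime where the two algebras are uniformly close after tensoring with matrices, and I would cite the relevant result of J.~Phillips guaranteeing that simplicity passes to close subalgebras of nuclear type.

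The heart of the matter, as the surrounding text makes explicit, is showing $B$ is purely infinite. The plan is to use Zhang's characterisation \cite{Zhang.PurelyInfinite}: a simple C$^*$-algebra is purely infinite if and only if it has real rank zero and every non-zero projection is infinite. Having just established that $B$ is simple and nuclear, I apply Theorem \ref{Kirchberg.RealRankZero} (valid since $1/101<1/8$) to conclude that $B$ has real rank zero from the fact that $A$, being purely infinite and simple, has real rank zero. I then apply Lemma \ref{Kirchberg.Infinite} (valid since $1/101<1/14$) to conclude that every non-zero projection in $B$ is infinite, using that every non-zero projection in the simple purely infinite algebra $A$ is infinite. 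Zhang's theorem then yields that $B$ is purely infinite.

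The main obstacle I anticipate is the simplicity step, since the excerpt does not isolate a clean lemma for it, so I would need to reference the external work of J.~Phillips and the first author alluded to in the introductory sentence (\emph{``using the previous results, work of J.~Phillips and work of the first named author''}). Once simplicity is in hand, the remaining deductions are short citations of Theorem \ref{Kirchberg.RealRankZero}, Lemma \ref{Kirchberg.Infinite}, and Zhang's theorem, so I would keep the write-up brief: confirm the four constants ($1/101$ exceeds none of the thresholds $1/8$, $1/14$ required), assemble the properties, and conclude. Thus the proof is essentially a bookkeeping argument that $1/101$ is small enough to simultaneously trigger every cited perturbation result, with the real content having been front-loaded into Lemma \ref{Kirchberg.Infinite} and Theorem \ref{Kirchberg.RealRankZero}.
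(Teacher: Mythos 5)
Your proposal is correct and follows essentially the same route as the paper: nuclearity via \cite[Theorem 6.5]{Christensen.NearInclusions}, separability as folklore, simplicity from J.~Phillips' perturbation result (the paper cites \cite[Lemma 1.2]{Phillips.Perturbation}, which needs only $d(A,B)<1/80$ and no nuclearity hypothesis), and pure infiniteness via Zhang's characterisation combined with Theorem \ref{Kirchberg.RealRankZero} and Lemma \ref{Kirchberg.Infinite}. The only superfluous detour is your suggested $K$-theoretic/length machinery for simplicity, which is not needed and is not how the paper proceeds.
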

\begin{proof}
Since $d(A,B)<1/80$ and $A$ is simple, Lemma 1.2 of \cite{Phillips.Perturbation} shows that $B$ is simple.  Since $d(A,B)<1/101$ and $A$ is nuclear, Theorem 6.5 of \cite{Christensen.NearInclusions} shows that $B$ is also nuclear.  Since $d(A,B)<1/2$, $B$ is separable (this is folklore, see the comments in the proof of \cite[Theorem 6.1]{Christensen.NearInclusions} for example, or see \cite{Saw.PerturbNuclear} for a proof). Zhang's characterisation of purely infinite C$^*$-algebras shows that $A$ is real rank zero and every non-zero projection of $A$ is infinite so Theorem \ref{Kirchberg.RealRankZero} and Lemma \ref{Kirchberg.Infinite} show that $B$ has the same properties so is purely infinite.
\end{proof}

The following corollary is immediate from the Kirchberg-Phillips classification theorem 
\cite{Kirchberg.Phillips} and Khoshkam's result \cite{Khoshkam.PerturbationK} that sufficiently close nuclear C$^*$-algebras have isomorphic $K$-theory.
\begin{corollary}
Let $A$ and $B$ be C$^*$-algebras of a C$^*$-algebra $C$ with $d(A,B)<1/101$ and suppose that $A$ and $B$ satisfy the UCT.  If $A$ is a Kirchberg algebra, then $A$ is isomorphic to $B$.
\end{corollary}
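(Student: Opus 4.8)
The plan is to collect the facts already assembled and then invoke the Kirchberg--Phillips classification theorem. By the preceding theorem, the hypothesis $d(A,B)<1/101$ together with $A$ being a Kirchberg algebra forces $B$ to be a Kirchberg algebra as well, so both $A$ and $B$ are simple, separable, nuclear and purely infinite, and by assumption both satisfy the UCT. Since a purely infinite simple C$^*$-algebra admits no nonzero tracial functional, the Elliott invariant of each algebra collapses to its $K$-theory (together with the class of the unit in the unital case); thus it suffices to produce a graded $K$-theory isomorphism $\Phi_*\colon K_*(A)\to K_*(B)$ that respects this data, after which Kirchberg--Phillips applies.

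To obtain $\Phi_*$ I would use Khoshkam's result directly, as quantified in Theorem \ref{Trace.K}. Writing $\gamma=d(A,B)<1/101$ and noting that both $A$ and $B$ are nuclear, hence of length at most $2$ with length constant $1$, Proposition \ref{Length.Matrix} lifts the near inclusions $A\subseteq_\gamma B$ and $B\subseteq_\gamma A$ to uniform matrix near inclusions $A\otimes\mathbb M_n\subseteq_\mu B\otimes\mathbb M_n$ and $B\otimes\mathbb M_n\subseteq_\mu A\otimes\mathbb M_n$ for all $n$, with $\mu=2\gamma+\gamma^2$. For $\gamma<1/101$ one has $\mu<1/3$, so Theorem \ref{Trace.K} supplies an isomorphism $\Phi_*\colon K_*(A)\to K_*(B)$. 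Because $A$ and $B$ are purely infinite and simple, the positive cone is all of $K_0$ and the scale is the whole group, so the order and scale conditions are automatic and require no separate verification.

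It remains to check that $\Phi_0$ matches the class of the unit in the unital case. Since $\gamma<1/101<1/4$, Proposition \ref{Unital} shows that $A$ is unital if and only if $B$ is, and provides a unitary $u\in C$ with $u1_Au^*=1_B$ and $\|u-1_C\|<2\sqrt2\gamma$, whence $\|1_A-1_B\|\le 2\|u-1_C\|<4\sqrt2\gamma$. By construction $\Phi_0([1_A]_0)=[q]_0$ for a projection $q$ with $\|q-1_A\|<\sqrt2\mu$, so $\|q-1_B\|<\sqrt2\mu+4\sqrt2\gamma<1$, giving $[q]_0=[1_B]_0$ and hence $\Phi_0([1_A]_0)=[1_B]_0$. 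Therefore $\Phi_*$ carries the full $K$-theoretic invariant of $A$ to that of $B$, and the Kirchberg--Phillips classification theorem \cite{Kirchberg.Phillips,Kirchberg.ClassificationBook} yields a $^*$-isomorphism $A\cong B$.

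I expect the only point needing genuine care to be this final matching of the unit class (and, relatedly, confirming that the uniform matrix distance $\mu$ lies in the range where Khoshkam's map of Theorem \ref{Trace.K} is defined and is an isomorphism); everything else is a direct appeal to results already in hand, which is why the corollary can fairly be called immediate.
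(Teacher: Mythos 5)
Your proposal is correct and follows exactly the route the paper intends: the preceding theorem makes $B$ a Kirchberg algebra, Khoshkam's map (valid here since nuclearity gives length $2$ and constant $1$, so $\mu=2\gamma+\gamma^2<1/3$) identifies the $K$-theories, and Kirchberg--Phillips finishes the argument. The paper states the corollary as immediate and gives no further detail, so your extra care with the class of the unit and the triviality of the order/scale data for purely infinite simple algebras is a welcome elaboration rather than a deviation.
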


\section{Questions}\label{Questions}

The most important question in the perturbation theory of operator algebras is undoubtably Kadison and Kastler's original conjecture \cite{Kadison.Kastler} specialised to the cases of von Neumann algebras or separable C$^*$-algebras (thus excluding the examples from \cite{Christensen.CounterExamples}).  It would be very interesting to find any class $\mathcal A$ of non-injective von Neumann algebras or separable but non-nuclear C$^*$-algebras for which algebras sufficiently close to an algebra $A$ in $\mathcal A$ are isomorphic to $A$. We end the paper with three other natural questions which arose during our investigations.
\begin{question}
Does there exists a constant $\gamma_0>0$ such that if $A$ and $B$ are C$^*$-subalgebras of some C$^*$-algebra $C$ with $d(A,B)<\gamma_0$, then $A$ is exact if and only if $B$ is exact?
\end{question}

\begin{question}
Suppose that $\ell\geq 1$ and $K\geq 1$ are given.  Does there exist a constant $\gamma_{\ell,K}>0$ such that if $A$ and $B$ are C$^*$-subalgebras of some C$^*$-algebra $C$ with $d(A,B)<\gamma_{\ell,K}$ and $A$ has length at most $\ell$ with length constant at most $K$, then there is a natural isomorphism $\text{Ext}(A)\rightarrow \text{Ext}(B)$?
\end{question}
\noindent More generally one could also ask how $KK$-theory behaves in the context of close C$^*$-algebras with finite length.

\begin{question}
Are higher values of the real rank stable under small perturabtions?  What happens to the stable rank under small perturbations?
\end{question}


\begin{thebibliography}{10}

\bibitem{Arveson.InterpolationNest}
W.~Arveson.
\newblock Interpolation problems in nest algebras.
\newblock {\em J. Funct. Anal.}, 20(3):208--233, 1975.

\bibitem{Blackadar.OperatorAlgebras}
B.~Blackadar.
\newblock {\em Operator Algebras}, volume 122 of {\em Encyclopaedia of
  Mathematical Sciences, Operator Algebras and Non-Commutative Geometry}.
\newblock Springer, Berlin, 2006.

\bibitem{Blackadar.ExtendingStates}
B.~Blackadar and M.~R{\o}rdam.
\newblock Extending states on preordered semigroups and the existence of
  quasitraces on {$C\sp *$}-algebras.
\newblock {\em J. Algebra}, 152(1):240--247, 1992.

\bibitem{Bratteli.AF}
O.~Bratteli.
\newblock Inductive limits of finite dimensional {$C\sp{\ast} $}-algebras.
\newblock {\em Trans. Amer. Math. Soc.}, 171:195--234, 1972.

\bibitem{Bunce.SimilarityNuclear}
J.~W. Bunce.
\newblock The similarity problem for representations of {$C\sp{\ast}
  $}-algebras.
\newblock {\em Proc. Amer. Math. Soc.}, 81(3):409--414, 1981.

\bibitem{Christensen.CounterExamples}
M.~D. Choi and E.~Christensen.
\newblock Completely order isomorphic and close {$C\sp{\ast} $}-algebras need
  not be {$\sp{\ast} $}-isomorphic.
\newblock {\em Bull. London Math. Soc.}, 15(6):604--610, 1983.

\bibitem{Choi.EffrosCPAP}
M.~D. Choi and E.~G. Effros.
\newblock Nuclear {$C\sp*$}-algebras and the approximation property.
\newblock {\em Amer. J. Math.}, 100(1):61--79, 1978.

\bibitem{Christensen.PerturbationsType1}
E.~Christensen.
\newblock Perturbations of type {I} von {N}eumann algebras.
\newblock {\em J. London Math. Soc. (2)}, 9:395--405, 1974/75.

\bibitem{Christensen.Perturbations1}
E.~Christensen.
\newblock Perturbation of operator algebras.
\newblock {\em Invent. Math.}, 43(1):1--13, 1977.

\bibitem{Christensen.Perturbations2}
E.~Christensen.
\newblock Perturbations of operator algebras II.
\newblock {\em Indiana Univ. Math. J.}, 26:891--904, 1977.

\bibitem{Christensen.ExtensionDerivations}
E.~Christensen.
\newblock Extensions of derivations.
\newblock {\em J. Funct. Anal.}, 27(2):234--247, 1978.

\bibitem{Christensen.NearInclusions}
E.~Christensen.
\newblock Near inclusions of {$C\sp{\ast} $}-algebras.
\newblock {\em Acta Math.}, 144(3-4):249--265, 1980.

\bibitem{Christensen.ExtensionDerivations2}
E.~Christensen.
\newblock Extensions of derivations. {II}.
\newblock {\em Math. Scand.}, 50(1):111--122, 1982.

\bibitem{Saw.PerturbNuclear}
E.~Christensen, A.~Sinclair, R.~R. Smith, S.~White, and W.~Winter.
\newblock Perturbations of nuclear {C$^*$}-algebras.
\newblock Preprint, 2009.

\bibitem{Davidson.Example}
K.~R. Davidson.
\newblock {\em {$C\sp *$}-algebras by example}, volume~6 of {\em Fields
  Institute Monographs}.
\newblock American Mathematical Society, Providence, RI, 1996.

\bibitem{Dixmier.AlgOpsHilbertFrench}
J.~Dixmier.
\newblock {\em Les alg\`ebres d'op\'erateurs dans l'espace hilbertien
  (alg\`ebres de von {N}eumann)}.
\newblock Gauthier-Villars \'Editeur, Paris, 1969.
\newblock Deuxi{\`e}me {\'e}dition, revue et augment{\'e}e, Cahiers
  Scientifiques, Fasc. XXV.

\bibitem{Elliott.ClassificationAF}
G.~A. Elliott.
\newblock On the classification of inductive limits of sequences of semisimple
  finite-dimensional algebras.
\newblock {\em J. Algebra}, 38(1):29--44, 1976.

\bibitem{Glimm}
J.~G. Glimm.
\newblock On a certain class of operator algebras.
\newblock {\em Trans. Amer. Math. Soc.}, 95:318--340, 1960.    
    

\bibitem{Haagerup.SimilarityCyclic}
U.~Haagerup.
\newblock Solution of the similarity problem for cyclic representations of
  {$C\sp{\ast} $}-algebras.
\newblock {\em Ann. of Math. (2)}, 118(2):215--240, 1983.

\bibitem{Haagerup.Quasitrace}
U.~Haagerup.
\newblock Every quasi-trace on an exact {C$^*$}-algebra is a trace.
\newblock preprint, 1991.

\bibitem{Johnson.NearInclusions}
B.~E. Johnson.
\newblock Near inclusions for subhomogeneous {$C\sp *$}-algebras.
\newblock {\em Proc. London Math. Soc. (3)}, 68(2):399--422, 1994.

\bibitem{Kadison.OrthRepresentations}
R.~V. Kadison.
\newblock On the orthogonalization of operator representations.
\newblock {\em Amer. J. Math.}, 77:600--620, 1955.

\bibitem{Kadison.Kastler}
R.~V. Kadison and D.~Kastler.
\newblock Perturbations of von {N}eumann algebras. {I}. {S}tability of type.
\newblock {\em Amer. J. Math.}, 94:38--54, 1972.

\bibitem{KR.2}
R.~V. Kadison and J.~R. Ringrose.
\newblock {\em Fundamentals of the theory of operator algebras. {V}ol. {II}},
  volume~16 of {\em Graduate Studies in Mathematics}.
\newblock American Mathematical Society, Providence, RI, 1997.
\newblock Advanced theory, Corrected reprint of the 1986 original.


\bibitem{Khoshkam.UnitaryEquivalence}
M.~Khoshkam.
\newblock On the unitary equivalence of close {$C\sp \ast$}-algebras.
\newblock {\em Michigan Math. J.}, 31(3):331--338, 1984.

\bibitem{Khoshkam.PerturbationK}
M.~Khoshkam.
\newblock Perturbations of {$C\sp{\ast} $}-algebras and {$K$}-theory.
\newblock {\em J. Operator Theory}, 12(1):89--99, 1984.

\bibitem{Kirchberg.ClassificationBook}
E.~Kirchberg.
\newblock The classification of purely infinite {C$^*$}-algebras using
  {K}asparov's theorm.
\newblock to appear in the Fields Insitute Communication Series.

\bibitem{Kirchberg.DerivationSimilarity}
E.~Kirchberg.
\newblock The derivation problem and the similarity problem are equivalent.
\newblock {\em J. Operator Theory}, 36(1):59--62, 1996.

\bibitem{Kirchberg.Phillips}
E.~Kirchberg and N.~C. Phillips.
\newblock Embedding of exact {$C\sp *$}-algebras in the {C}untz algebra
  {$\mathcal O\sb 2$}.
\newblock {\em J. Reine Angew. Math.}, 525:17--53, 2000.

\bibitem{Murphy.Book}
G.~J. Murphy.
\newblock {\em {${\mathrm C}^*$}-algebras and operator theory}.
\newblock Academic Press Inc., Boston, MA, 1990.

\bibitem{Paulsen.CB.Book} 
V.~ Paulsen.
\newblock {\emph {Completely bounded maps and operator algebras}}, volume 78 of
{\em {Cambridge Studies in Advanced Mathematics.}}
\newblock Cambridge University Press, Cambridge, 2002.

\bibitem{Phillips.Perturbation}
J.~Phillips.
\newblock Perturbations of {$C\sp{\ast} $}-algebras.
\newblock {\em Indiana Univ. Math. J.}, 23:1167--1176, 1973/74.

\bibitem{Phillips.PerturbationAF}
J.~Phillips and I.~Raeburn.
\newblock Perturbations of {AF}-algebras.
\newblock {\em Canad. J. Math.}, 31(5):1012--1016, 1979.

\bibitem{Phillips.Perturbations2}
J.~Phillips and I.~Raeburn.
\newblock Perturbations of {$C\sp{\ast} $}-algebras. {II}.
\newblock {\em Proc. London Math. Soc. (3)}, 43(1):46--72, 1981.

\bibitem{Pisier.StPetersburg}
G.~Pisier.
\newblock The similarity degree of an operator algebra.
\newblock {\em St. Petersburg Math. J.}, 10:103--146, 1999.

\bibitem{Pisier.SimilarityRemarks}
G.~Pisier.
\newblock Remarks on the similarity degree of an operator algebra.
\newblock {\em Internat. J. Math.}, 12(4):403--414, 2001.

\bibitem{Pisier.SimilarityBook}
G.~Pisier.
\newblock {\em Similarity problems and completely bounded maps}, volume 1618 of
  {\em Lecture Notes in Mathematics}.
\newblock Springer-Verlag, Berlin, expanded edition, 2001.
\newblock Includes the solution to ``The Halmos problem''.

\bibitem{Pisier.OperatorSpaceBook}
G.~Pisier.
\newblock {\em Introduction to operator space theory}, volume 294 of {\em
  London Mathematical Society Lecture Note Series}.
\newblock Cambridge University Press, Cambridge, 2003.

\bibitem{Pisier.SimilarityNuclear}
G.~Pisier.
\newblock A similarity degree characterization of nuclear {$C\sp *$}-algebras.
\newblock {\em Publ. Res. Inst. Math. Sci.}, 42(3):691--704, 2006.

\bibitem{Raeburn.CohomologyPerturbation}
I.~Raeburn and J.~L. Taylor.
\newblock Hochschild cohomology and perturbations of {B}anach algebras.
\newblock {\em J. Functional Analysis}, 25(3):258--266, 1977.

\bibitem{Rordam.ClassificationBook}
M.~R{\o}rdam.
\newblock Classification of nuclear, simple {$C\sp *$}-algebras.
\newblock In {\em Classification of nuclear {$C\sp *$}-algebras. {E}ntropy in
  operator algebras}, volume 126 of {\em Encyclopaedia Math. Sci.}, pages
  1--145. Springer, Berlin, 2002.


\bibitem{Zhang.PurelyInfinite}
S.~Zhang.
\newblock A property of purely infinite simple {$C\sp *$}-algebras.
\newblock {\em Proc. Amer. Math. Soc.}, 109(3):717--720, 1990.

\end{thebibliography}
\end{document}